\newtheorem{thm}{Theorem}[section]
\newtheorem{cor}[thm]{Corollary}
\newtheorem{lem}[thm]{Lemma}
\newtheorem{prop}[thm]{Proposition}
\theoremstyle{definition}
\newtheorem{rem}[thm]{Remark}
\newtheorem{example}[thm]{Example}
\newtheorem{question}{Question}
\numberwithin{equation}{section}
\newcommand{\fbl}{\text{FBL}}
\newcommand{\fvl}{\text{FVL}}
\newcommand{\vertiii}[1]{{\left\vert\kern-0.25ex\left\vert\kern-0.25ex\left\vert #1 
		\right\vert\kern-0.25ex\right\vert\kern-0.25ex\right\vert}}
\newcommand\PS{{\normalfont\textsf{PS}}_\textsf{2}}
\title[Complemented subspaces of Banach lattices]{Complemented subspaces of Banach lattices}
\author[de Hevia]{David de Hevia}
\address{Instituto de Ciencias Matem\'aticas (CSIC-UAM-UC3M-UCM)\\
Consejo Superior de Investigaciones Cient\'ificas\\
C/ Nicol\'as Cabrera, 13--15, Campus de Cantoblanco UAM\\
28049 Madrid, Spain.}
\email{david.dehevia@icmat.es}
\author[Tradacete]{Pedro Tradacete}
\address{Instituto de Ciencias Matem\'aticas (CSIC-UAM-UC3M-UCM)\\
Consejo Superior de Investigaciones Cient\'ificas\\
C/ Nicol\'as Cabrera, 13--15, Campus de Cantoblanco UAM\\
28049 Madrid, Spain.}
\email{pedro.tradacete@icmat.es}
\subjclass[2020]{46B42, 46B03}
\keywords{Banach lattice; complemented subspace; free Banach lattice; hyperplane; $C(K)$-space; AM-space; $L_1$-space}
\begin{document}

\begin{abstract}
    We survey recent developments on the structure of complemented subspaces of Banach lattices, including in particular the construction of a complemented subspace of a $C(K)$-space which is not linearly isomorphic to any Banach lattice. Motivated by this, several natural questions and directions of future research are presented. We provide an approach to some of these problems using tools from the theory of free Banach lattices. 
\end{abstract}

\maketitle
\nocite{*}
\section{Introduction}

The spaces $L_p(\Omega,\Sigma,\mu)$, for $1\leq p\leq \infty$, and $C(K)$, for a compact Hausdorff space $K$, are undoubtedly among the most important and interesting classes of classical Banach spaces. The study of their subspaces has been the object of intensive research over several decades which has significantly increased our understanding of the geometry of Banach spaces. However, some basic questions about them still remain a mystery. This is the case for instance of characterizing their complemented subspaces. In other words, determining which decompositions of the form $L_p(\Omega,\Sigma,\mu)=X\oplus Y$ (respectively, $C(K)=X\oplus Y$) can be implemented. Since these classical spaces are particular instances of Banach lattices, our aim in this note is to present recent progress and new approaches in the study of complemented subspaces of Banach lattices, survey the current state of certain open fundamental questions and raise some more to stimulate further research.

A central open question in the matter is the \textit{Complemented Subspace Problem for Banach lattices} (CSP, for short) asking whether every complemented subspace of a Banach lattice must be linearly isomorphic to some Banach lattice. This question is explicitly mentioned by Casazza, Kalton, and Tzafriri at the beginning of \cite{CKT87}, where the authors refer to it as ``\textit{one of the most important problems in the theory of Banach lattices, which is still open}". Although this article dates from 1987, the problem must have been well known to specialists for some time. 

One of the reasons why we can perhaps not expect to find this problem stated much earlier in the literature is that it is by no means trivial to determine whether a Banach space can be linearly isomorphic to a Banach lattice. As far as we are concerned, the first examples of Banach spaces not isomorphic to Banach lattices were found during the 1970s and, to this end, two notions of local unconditional structure, which generalize that of $\mathcal{L}_p$-space, -- DPR-lust introduced in \cite{DPR} and GL-lust, in \cite{GL74}-- have proven useful. Among the examples that fail these properties (and thus cannot be isomorphic to Banach lattices), we find the space of compact operators $\mathcal{K}(\ell_2)$ \cite[Theorem 5.1]{GL74}, the space of bounded holomorphic functions on the disk $\mathcal{H}^\infty(\mathbb{D})$ \cite{Pelczynski}, James' space $\mathcal{J}$ \cite[Theorem 8]{Lacey-lust}, the Kalton–Peck space \cite{JLS80}... 

Both notions of local unconditional structure are closely related to that of complemented subspaces in a Banach lattice. In fact, it is known that a Banach space $E$ has GL-lust if and only if its bidual $E^{**}$ is complemented in a Banach lattice. In particular, every complemented subspace of a Banach lattice has GL-lust. However, it is still unknown whether every complemented subspace of a Banach lattice has DPR-lust. In fact, the \textit{main conjecture} of the 1975 paper \cite{FJT75} --see page 396-- is that the latter is true, so probably the CSP has been an important motivation for studying the DPR-lust. It is not even known whether GL-lust and DPR-lust are equivalent, although in this direction it is worth mentioning that a Banach space $X$ has GL-lust if and only if $X\oplus c_0$ has DPR-lust \cite[second Corollary, p. 49]{Lacey-lust}. For more information on local unconditional structures, we refer the reader to \cite[Chapter 17]{DJT}, \cite[Section 34]{TJ-book} and \cite{Lacey-lust}.

Apart from having local unconditional structure, Banach lattices have certain properties which are not shared by general  Banach spaces. These include the following (for a Banach lattice $X$):
\begin{itemize}
    \item $X$ is reflexive if and only if it does not contain subspaces isomorphic to $c_0$ or $\ell_1$. This is no longer true for Banach spaces, as James' space \cite{J51} or the second family of Bourgain–Delbaen spaces in \cite{BD} show.
    \item $X$ is weakly sequentially complete if and only no subspace of $X$ is isomorphic to $c_0$. James' space also fails this property.
    \item $X$ contains an unconditional basic sequence. However, there exist Banach spaces which are hereditarily indecomposable \cite{GM} and hence these cannot contain unconditional basic sequences.  
\end{itemize}
The proof of the above results can be found in \cite[Section 1.c]{LT2-book} and, in fact, all of them are also valid when $X$ is a complemented subspace of a Banach lattice. Several other isomorphic properties of Banach lattices (and, often, their complemented subspaces) can be found in \cite{BL76, BVL, FJT75, Ghoussoub, Johnson, Lacey-lust, T72}.

The CSP has relevant variants for $L_1$-spaces and $C(K)$-spaces. In his 1960 paper, Pe{\l{}}czy{\'{n}}ski poses the following two questions \cite[p. 210]{P60}:
\begin{enumerate}
    \item[$\mathbf{P_1}.$] Is every complemented subspace of a $C(K)$-space isomorphic to a $C(K)$-space?
    \item[$\mathbf{P_2}.$] Is every complemented subspace of an $L_1$-space isomorphic to an $L_1$-space?
\end{enumerate}
Separable versions of the above problems have been intensively studied. More specifically, it is conjectured that every complemented subspace of $C[0,1]$ is isomorphic to a $C(K)$-space and that every complemented subspace of $L_1[0,1]$ is isomorphic to $\ell_1$ or to $L_1[0,1]$ (see the dicussion at the end of Chapter 5 of \cite{AK-book}).

While Problem $\mathbf{P_2}$ is still open, Problem $\mathbf{P_1}$ has recently been answered in the negative by Plebanek and Salguero-Alarcón \cite{PS2}: a $1$-complemented subspace --denoted by $\PS$-- of a $C(K)$-space is exhibited which is not isomorphic to any $C(K)$-space. Shortly thereafter, Martínez-Cervantes, Salguero-Alarcón and the two authors of this paper have proved in \cite{dHMST23} that $\PS$ is, in fact, not isomorphic to any Banach lattice, showing that CSP (for Banach lattices) also has a negative solution. Moreover, it is also proven in \cite{dHMST23} that a slight modification of $\PS$ gives a counterexample to the CSP for complex Banach lattices. In Section \ref{sec:PS2}, we will briefly sketch this construction. It should be noted that $\PS$ is a non-separable space, so the CSP is still open both for separable $C(K)$-spaces and Banach lattices.

Along the text, several open questions related to (and motivated by) this problem will also be presented. To study these questions and, in general, to better understand the complemented subspaces of Banach lattices we will make use of the relatively recent tool of \textit{free Banach lattices generated by Banach spaces}, which was introduced by Avilés, Rodríguez and Tradacete in \cite{ART18}. These objects provide a \textit{canonical place} to investigate this problem. This is because if a Banach space $E$ is complemented in some Banach lattice, then it must be complemented in $\fbl[E]$, the free Banach lattice generated by $E$, and notably with the best possible projection constant (Proposition \ref{prop:FBL-complemented}). Further connections between free Banach lattices and the CSP are established throughout Section \ref{sec:FBLs}. It is worth highlighting that free Banach lattices actually provide a new criterion for distinguishing between Banach lattices and their complemented subspaces (Proposition \ref{prop:characterization-isomorphic-Banach-lattices}).

The fact that the projection constant obtained in the aforementioned Proposition \ref{prop:FBL-complemented} is optimal may be relevant, since the \textit{Norm-one Complemented Subspace Problem} has many peculiarities that we attempt to illustrate in Section \ref{sec:constant}. In Section \ref{sec:extra properties}, we address another kind of question: suppose that $E$ is a Banach space which satisfies a certain property ($P$), can we find a Banach lattice $Y$ having the property ($P$) which contains a complemented subspace isomorphic to $E$? We will survey several positive results in this direction and present new additions.

Section \ref{sec:hyperplanes} is devoted to analyzing a very particular case of the Complemented Subspace Problem which we call \textit{The hyperplane problem for Banach lattices}: is every hyperplane of a Banach lattice isomorphic to a Banach lattice? Although in many cases it is very easy to answer this question in the affirmative, the problem is open in general. Finally, in Section \ref{sec:more questions}, we will present and discuss further relevant open questions concerning the complemented subspaces of Banach lattices.

%To be isomorphic to an $L_1$-space is a three space property \cite[Theorem 3.4.b]{CG-book}. However, the three space problem for $C(K)$-spaces has a negative answer and this was observed by F. Cabello (see \cite[Theorem 3.5.b]{CG-book}).  Being isomorphic to a BL is not a 3SP, see, for instance \cite{KP79} (in \cite{JLS80} it is shown that this space fails GL-lust). \textcolor{blue}{Cuidado aquí. Hay distintas nociones de ser propiedad de tres espacios, y creo que en este libro en general la idea es ver si se preservan propiedades POR SUMAS TORCIDAS. Lo de Talagrand parece diferente} 

\section{A complemented subspace of a Banach lattice which is not isomorphic to a Banach lattice}\label{sec:PS2}

In 2023, Plebanek and Salguero-Alarcón provided the first example of a complemented subspace of a $C(K)$-space which is not isomorphic to a $C(K)$-space \cite{PS2}, solving in the negative a longstanding and intensively studied open question in the theory of $C(K)$-spaces. We refer the reader to \cite[Section 5]{Ros} for a summary of the most relevant results concerning the CSP for $C(K)$-spaces. The space constructed in \cite{PS2} will be denoted by $\PS$. It has been recently shown that $\PS$ cannot be linearly isomorphic to any Banach lattice \cite[Theorem 4.4]{dHMST23}, so this example has thus also served to solve the Complemented Subspace Problem for Banach lattices in the negative. Let us briefly recall the nature of this Banach space.

\subsection{Description of $\PS$} Let $\text{fin}(\mathbb N)$ denote the set of finite subsets of $\mathbb N$. Recall that a family $\mathcal{A}$ of infinite subsets of $\mathbb{N}$ is said to be \textit{almost disjoint} if $A\cap B\in \text{fin}(\mathbb N)$ for every distinct $A, B\in\mathcal{A}$. For an almost disjoint family $\mathcal{A}$, we define
$$
\text{JL}(\mathcal{A}):=\overline{\text{span}}\bigl\{\mathbf{1}_A\::\:A\in \text{fin}(\mathbb{N})\cup\mathcal{A}\cup\{\mathbb{N}\}\bigr\}\subset \ell_\infty,
$$
and call this space the Johnson-Lindenstrauss space associated to $\mathcal{A}$, as it was first introduced in \cite[Example 2]{JL1974}. Since $\text{JL}(\mathcal{A})$ is a closed sublattice of $\ell_\infty$ containing the constant function $\mathbf{1}$, by Kakutani's representation theorem for AM-spaces it follows that it is lattice isometric to a $C(K)$-space \cite[Theorem 1.b.6]{LT2-book}. It is not difficult to check that $\text{JL}(\mathcal{A})^*$ is linearly isometric to $\ell_1(\mathbb{N}\cup\mathcal{A}\cup\{\mathbb{N}\})$, so that $K$ must be scattered. This particular $C(K)$-space construction method has produced various examples with exotic properties, including:
\begin{itemize}
    \item The first example of two Banach spaces which are Lipschitz isomorphic but not linearly isomorphic \cite{AL}.
    \item A Banach space $X$ that cannot be linearly isometric to any Banach lattice, but such that $X^\mathcal{U}$ is linearly isometric to $c_0^\mathcal{U}$ for some ultrafilter $\mathcal{U}$ \cite{HHM83}.
    \item A $C_0(K)$-space admitting only few operators and decompositions \cite{KL}.
    \item The above-mentioned counterexample to the CSP for $C(K)$-spaces \cite{PS2}.
\end{itemize}

Let $\mathcal{A}=\{A_\xi\::\:\xi<\mathfrak{c}\}$ be an almost disjoint family of $\mathbb{N}$ of cardinality $\mathfrak{c}$. We write $\widehat{\mathbb{N}}=\mathbb{N}\times\{0,1\}$ and for every $\xi<\mathfrak{c}$, $n\in\mathbb{N}$, denote $\widehat{A_\xi}=A_\xi\times\{0,1\}$ and $c_n=\{(n,0),(n,1)\}$. For $\xi<\mathfrak{c}$, we decompose $\widehat{A_\xi}=B_\xi^0\cup B_\xi^1$ in such a way that for all $n\in\mathbb{N}$, the sets $B_\xi^0\cap c_n$ and $B_\xi^1\cap c_n$ are singletons. Note that $\mathcal{B}:=\{B_\xi^0,B_\xi^1\::\:\xi<\mathfrak{c}\}$ is an almost disjoint family of $\widehat{\mathbb{N}}$. With a slight abuse of notation, we will denote by $\text{JL}(\mathcal{A})$ the closed subspace of $\ell_\infty(\widehat{\mathbb{N}})$ spanned by $\{\mathbf{1}_{c_n}\::\:n\in\mathbb{N}\}\cup\bigl\{\mathbf{1}_{\widehat{A_\xi}}\::\:\xi<\mathfrak{c}\bigr\}\cup\bigl\{\mathbf{1}_{\widehat{\mathbb{N}}}\bigr\}$. Similarly, we define $\text{JL}(\mathcal{B}):=\overline{\text{span}}\bigl\{\mathbf{1}_B\::\:B\in \text{fin}(\widehat{\mathbb{N}})\cup\mathcal{B}\cup\{\widehat{\mathbb{N}}\}\bigr\}\subset \ell_\infty(\widehat{\mathbb{N}})$. 

It should be observed that $\text{JL}(\mathcal{A})$ is precisely the subspace of $\text{JL}(\mathcal{B})$ consisting of all functions which are constant on each fiber $c_n$ and the map 
$P: \text{JL}(\mathcal{B}) \to \text{JL}(\mathcal{B})$ defined as 
\[ Pf(n,0) = Pf(n,1) = \frac{1}{2}\big(f(n,0)+f(n,1)\big), \quad n\in\mathbb{N},\]
is a norm-one projection whose image is $\text{JL}(\mathcal{A})$. Now, we write $X=\text{ker}\,P$, so we have $\text{JL}(\mathcal{B})=\text{JL}(\mathcal{A})\oplus X$. Note that $X$ is also a $1$-complemented subspace of $\text{JL}(\mathcal{B})$ since the projection $Q=\text{id}_{\text{JL}(\mathcal{B})}-P$ is given by
$$
Qf(n,0) = - Qf(n,1) = \frac12\big(f(n,0) - f(n,1)\big),\quad n\in\mathbb{N}.
$$
In \cite{PS2}, two almost disjoint families, $\mathcal{A}$ and $\mathcal{B}$, are constructed in the form we just described in such a way $X$ is not isomorphic to a $C(K)$-space.

\subsection{$\PS$ is not isomorphic to a Banach lattice} We now briefly sketch how it is established in \cite{dHMST23} that $\PS$ cannot even be isomorphic to a Banach lattice. Local theory of Banach spaces plays a key role in reducing this problem. Let us begin by recalling the notion of $\mathcal{L}_p$-space:

Given $1\leq p\leq \infty$ and $\lambda\geq 1$, a Banach space $X$ is said to be an \emph{$\mathcal{L}_{p,\lambda}$-space} if for every finite-dimensional subspace $E$ of $X$ there is a finite-dimensional subspace $F$ of $X$ such that $E\subset F$ and $F$ is $\lambda$-isomorphic to $\ell_p^{n}$, where $n=\text{dim}\, F$. We say that a Banach space $X$ is an \emph{$\mathcal{L}_p$-space} if it is an $\mathcal{L}_{p,\lambda}$-space for some $\lambda$. The most relevant properties of these classes of Banach spaces may be found in \cite{LP68, LR69}.

Let us also recall two important classes of Banach lattices. A Banach lattice $X$ is said to be an \textit{AL-space} if $\|x+y\|=\|x\|+\|y\|$ for all $x,y\in X$ with $x,y\geq 0$. On the other hand, $X$ is called an \textit{AM-space} if it satisfies $\|x\lor y\|=\max\{\|x\|,\,\|y\|\}$ for every pair of positive vectors $x,y\in X$. These two notions are dual to each other: $X$ is an AL-space --respectively, an AM-space-- if and only if $X^*$ is an AM-space --resp., an AL-space-- \cite[Theorem 7, Section 3]{Lacey-book}. Kakutani's representation theorems allow us to identify (in a lattice isometric way) AL-spaces with $L_1$-spaces \cite[Theorem 1.b.2]{LT2-book} and AM-spaces with sublattices of $C(K)$-spaces \cite[Theorem 1.b.6]{LT2-book}.

Since $\PS$ is an isomorphic predual of $\ell_1(\Gamma)$, then it is an $\mathcal{L}_\infty$-space \cite[Theorem III (a)]{LR69}. Therefore, for this space, being isomorphic to a Banach lattice is equivalent to being isomorphic to an AM-space \cite[Corollary 2.2]{dHMST23}. Furthermore, since $\PS$ has a countable norming set (as it is a subspace of $\ell_\infty(\widehat{\mathbb{N}})$), being isomorphic to a Banach lattice is in this case equivalent to \textit{being isomorphic to a sublattice of $\ell_\infty$} \cite[Proposition 4.1]{dHMST23}.

The condition that $\PS$ is \textit{not} isomorphic to a sublattice of $\ell_\infty$ can be rewritten as follows: for every norming sequence $(e_n^*)_{n=1}^\infty \subset B_{\PS^*}$ there exists $f\in \PS$ such that there is no $g\in \PS$ for which $|e_n^*(f)|=e_n^*(g)$ for all $n\in\mathbb{N}$. In other words, for every norming sequence in $B_{\PS^*}$ we can always find a function in $\PS$ which does not have \textit{modulus} with respect to that sequence. A careful analysis of the construction of $\PS$ (explained in detail in \cite[Theorem 4.4]{dHMST23}) reveals that for every norming sequence in $B_{\PS^*}$ there exists $\xi<\mathfrak{c}$ such that $1_{B_\xi^0}-1_{B_\xi^1}\in \PS$ has no modulus with respect to that sequence, and hence $\PS$ cannot be isomorphic to a Banach lattice.

We want to point out that the space $\PS$ can be slightly modified to also provide a negative answer to \textit{the CSP for complex Banach lattices} \cite[Section 5]{dHMST23}. This is noteworthy, as results for 1-complemented subspaces of complex Banach lattices often differ significantly from those in the real setting \cite{DKL, Kalton(Dales), KW}.

\section{Complemented subspaces of Banach lattices via free Banach lattices}\label{sec:FBLs}

Let us start by recalling that \textit{the free Banach lattice generated by a Banach space $E$} is a pair ($\fbl[E]$, $\delta_E$), where $\fbl[E]$ is a Banach lattice and $\delta_E:E\to \fbl[E]$ is a linear isometric embedding, which has the following universal property: for every operator $T:E\to X$ into a Banach lattice $X$, there exists a unique lattice homomorphism $\widehat{T}:\fbl[E]\to X$ which extends $T$, in the sense that $\widehat{T}\delta_E=T$, and $\|\widehat{T}\|=\|T\|$. A useful way to visualize this situation is by means of the following commutative diagram:
$$
	\xymatrix{\fbl[E]\ar@{-->}^{\exists!\,\widehat{T}}[drr]&& \\
	E\ar@{^{(}->}[u]^{\delta_E}\ar[rr]^T&&X}
$$
It should be noticed that, by definition, if such an object exists, then it must be unique up to lattice isometry and that is the reason why we speak about \textit{the} free Banach lattice over $E$ and not \textit{a} free Banach lattice over $E$. Its existence was proven in \cite{ART18}, where in fact an explicit functional representation of the free Banach lattice over a Banach space was given. Let us briefly recall this construction: Let $E$ be a Banach space and denote by $H[E]$ the linear subspace of $\mathbb{R}^{E^*}$ consisting of all positively homogeneous functions $f:E^*\to\mathbb{R}$. Given $f\in H[E]$, define
$$
\|f\|_{\fbl[E]}=\sup\left\{\sum_{k=1}^n |f(x_k^*)|\::\:n\in\mathbb{N}, \:(x_k^*)_{k=1}^n\subset E^*,\:\sup_{x\in B_E}\sum_{k=1}^n|x_k^*(x)|\leq 1\right\},
$$
and note that the vector space $H_1[E]:=\{f\in H[E]\::\: \|f\|_{\fbl[E]}<\infty\}$ endowed with the pointwise order and the above norm is a Banach lattice. For each $x\in E$, let $\delta_x:E^*\to\mathbb{R}$ be defined by $\delta_x(x^*):=x^*(x)$, $x^*\in E^*$ and observe that $\delta_x\in H_1[E]$ and $\|x\|=\|\delta_x\|_{\fbl[E]}$. It was shown in \cite{ART18} that the free Banach lattice generated by $E$ is the Banach lattice $\fbl[E]=\overline{\text{lat}}\{\delta_x\::\:x\in E\}\subset H_1[E]$ together with the linear isometric embedding $\delta_E:E\to\fbl[E]$ given by $\delta_E(x):=\delta_x$.

In \cite{JLTTT}, this notion was extended to certain \textit{subcategories} of the category of Banach lattices and lattice homorphisms, such as, for example, the category of $p$-convex Banach lattices and lattice homomorphisms. Recall that, given $1\leq p\leq \infty$, a Banach lattice $X$ is said to be \emph{$p$-convex} if there is a constant $M\geq 1$ such that for every choice of vectors $(x_k)_{k=1}^n$ in $X$ we have
$$
\left\|\left(\sum_{k=1}^n|x_k|^p\right)^{\frac{1}{p}}\right\|\leq M\left(\sum_{k=1}^n\|x_k\|^p\right)^{\frac{1}{p}}, \qquad \text{if } 1\leq p<\infty,
$$
or
$$
\left\|\bigvee_{k=1}^n |x_k| \right\|\leq M \max_{1\leq k\leq n} \|x_k\|, \qquad \text{ if } p=\infty.
$$
Such expressions $\left(\sum_{k=1}^n|x_k|^p\right)^{\frac{1}{p}}$ are well defined thanks to Krivine's functional calculus (for details, see \cite[Section 1.d]{LT2-book}). The smallest possible value of $M$ is called the $\textit{$p$-convexity constant}$ of $X$ and will be denoted by $M^{(p)}(X)$. It should be noted that every Banach lattice $X$ is $1$-convex with $M^{(1)}(X)=1$. On the other hand, it can be checked that if $X$ is an $\infty$-convex Banach lattice, then it is lattice $M^{(\infty)}(X)$-isomorphic to an AM-space (see \cite[Theorem 2.1.12]{M-N-book}).

Given a Banach space $E$ and $p\in[1,\infty]$, \textit{the free $p$-convex Banach lattice over $E$} is a $p$-convex Banach lattice $\fbl^{(p)}[E]$ with $p$-convexity constant equal to $1$ together with a linear isometric embedding $\delta_E:E\to \fbl^{(p)}[E]$ with the property that for every $p$-convex Banach lattice $X$ and every operator $T:E\to X$, there exists a unique lattice homomorphism $\widehat{T}:\fbl^{(p)}[E]\to X$ such that $\widehat{T}\delta_E=T$ and, moreover, $\|\widehat{T}\|\leq M^{(p)}(X)\|T\|$. Note that for $p=1$ this definition coincides with the one of free Banach lattice generated by a Banach space introduced at the beginning of the section, that is, for any Banach space $E$, $\fbl[E]$ and $\fbl^{(1)}[E]$ coincide. Throughout the text, we will only use the notation $\fbl[E]$. 

The existence of the $\fbl^{(p)}[E]$ for any $1\leq p\leq \infty$ was proven in \cite{JLTTT}, where the authors also give a functional representation of this object in the same spirit as that of $\fbl[E]$. In this case, they consider again the space $H[E]\subset \mathbb{R}^{E^*}$ of positively homogeneous functions and define  
\begin{equation*}
\|f\|_{\fbl^{(p)}[E]}=\sup\left\{\left(\sum_{k=1}^n |f(x_k^*)|^p\right)^\frac{1}{p}\::\:n\in\mathbb{N}, \:(x_k^*)_{k=1}^n\subset E^*,\:\sup_{x\in B_E}\sum_{k=1}^n|x_k^*(x)|^p\leq 1\right\},
\end{equation*}
when $p<\infty$, and
\begin{equation*}
	\|f\|_{\fbl^{(\infty)}[E]}= \sup_{x^*\in B_{E^*}} |f(x^*)|,
\end{equation*}
when $p=\infty$. It can be shown that 
$$
\fbl^{(p)}[E]:=\overline{\text{lat}\{\delta_E(x)\::\: x\in E\}}^{\|\cdot\|_{\fbl^{(p)}[E]}}\subset H_p[E]=\{f\in H[E]\::\: \|f\|_{\fbl^{(p)}[E]}<\infty\},
$$
together with the linear isometric embedding $\delta_E:E\to\fbl^{(p)}[E]$ given by $\delta_E(x)(x^*):=x^*(x)$ is the free $p$-convex Banach lattice generated by $E$. Moreover, in the case $p=\infty$, it is pointed out in \cite[Proposition 2.2]{OTTT24} that 
with this procedure we obtain that $\fbl^{(\infty)}[E]$ is precisely $C_{ph}(B_{E^*})$, the space of positively homogeneous $w^*$-continuous functions on $B_{E^*}$, equipped with the supremum norm and the pointwise order. 

For details of these constructions, see \cite{JLTTT}. We also refer the reader to \cite{OTTT24} for an extensive study of the properties of these objects.

Free Banach lattices have a close connection with the Complemented Subspace Problem for Banach lattices. They serve as a canonical object to study this problem in the sense that if a Banach space $E$ is complemented in some Banach lattice, then it must be complemented in its corresponding $\fbl[E]$. The following proposition pinpoints this idea. An analogous result for $C(K)$-spaces can be found in \cite[Lemma, p.~247]{BL}.

\begin{prop}\label{prop:FBL-complemented}
Let $E$ be a Banach space. If $E$ is $C_1$-isomorphic to a $C_2$-complemented subspace of a Banach lattice, then $\delta(E)$ is $C_1 C_2$-complemented in $\fbl[E]$ 
\end{prop}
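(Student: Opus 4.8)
The plan is to exploit the universal property of $\fbl[E]$ in a single direction and then make a purely linear ``return trip''. Fix a Banach lattice $Y$, a closed subspace $W\subseteq Y$, a projection $P\colon Y\to Y$ with range $W$ and $\|P\|\le C_2$, and a linear isomorphism $u\colon E\to W$; rescaling if necessary, I would assume $\|u\|=1$ and $\|u^{-1}\|\le C_1$. First I would regard $u$ as an operator from $E$ into the Banach lattice $Y$ and apply the universal property to obtain the lattice homomorphism $\widehat{u}\colon\fbl[E]\to Y$ with $\widehat{u}\,\delta_E=u$ and $\|\widehat{u}\|=\|u\|=1$. Then I would set
$$
S:=\delta_E\circ u^{-1}\circ P\circ\widehat{u}\colon\ \fbl[E]\longrightarrow\fbl[E],
$$
which is well defined because $P$ takes values in $W=u(E)$, and I would estimate $\|S\|\le\|\delta_E\|\,\|u^{-1}\|\,\|P\|\,\|\widehat{u}\|\le C_1C_2$, using that $\delta_E$ is an isometric embedding.

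The remaining point is to check that $S$ is a projection with range $\delta_E(E)$. By construction $S(\fbl[E])\subseteq\delta_E(E)$, so it suffices to show that $S$ fixes $\delta_E(E)$ pointwise. For $x\in E$ one has $\widehat{u}(\delta_E(x))=u(x)\in W$, hence $P(\widehat{u}(\delta_E(x)))=u(x)$ since $P$ is the identity on $W$, and therefore $S(\delta_E(x))=\delta_E\big(u^{-1}(u(x))\big)=\delta_E(x)$. Combined with $S(\fbl[E])\subseteq\delta_E(E)$, this yields $S^2=S$, so $S$ is a bounded projection of $\fbl[E]$ onto $\delta_E(E)$ of norm at most $C_1C_2$, as claimed.

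I do not expect a genuine obstacle in this argument; the only conceptual input is the realization that the order structure is needed exactly once --- to manufacture the lattice homomorphism $\widehat{u}$ out of $\fbl[E]$ --- while $u^{-1}$ and $P$ enter merely as bounded operators, and whether or not they respect the lattice operations is irrelevant. As a complement, I would note that the constant is optimal in the expected sense: since $\fbl[E]$ is itself a Banach lattice and $\delta_E$ an isometric embedding, any $C$-complementation of $\delta_E(E)$ inside $\fbl[E]$ exhibits $E$ as a $C$-complemented subspace of a Banach lattice, so the infimum of projection constants over all Banach lattices in which (an isomorphic copy of) $E$ sits complementedly is already witnessed by $\fbl[E]$.
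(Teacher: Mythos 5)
Your argument is correct and coincides with the paper's proof: both extend the embedding $E\to Y\hookrightarrow X$ to a lattice homomorphism on $\fbl[E]$ via the universal property and then form the projection $\delta_E\circ u^{-1}\circ P\circ\widehat{u}$, with the same norm estimate $\leq C_1C_2$. The only cosmetic difference is your normalization $\|u\|=1$, $\|u^{-1}\|\leq C_1$ in place of the paper's direct use of $\|T\|\|T^{-1}\|\leq C_1$, which changes nothing.
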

\begin{proof}
By hypothesis, there exist a subspace $Y$ of a Banach lattice $X$, an isomorphism $T:E\to Y$ such that $\|T\|\|T^{-1}\|\leq C_1$ and a projection $P:X\to X$ onto $Y$ such that $\|P\|\leq C_2$. By the universal property of $\fbl[E]$, there is a unique lattice homomorphism $\widehat{\iota T}:\fbl[E]\to X$ which extends $\iota T$ (where $\iota$ stands for the natural inclusion of $Y$ into $X$ as a subspace) and $\|\widehat{\iota T}\|=\|\iota T\|=\|T\|$. We have the following diagram:
$$
\xymatrix{\fbl[E] \ar@{-->}[rrd]^{\widehat{\iota T}}&&&&& \\
E\ar@{^{(}->}[u]^{\delta_E} \ar[r]^T &Y\ar@{^{(}->}[r]^\iota & X \ar[r]^P& Y \ar[r]^{T^{-1}} & E\ar@{^{(}->}[r]^{\delta_E\quad}& \fbl[E]}
$$
Define $Q=\delta_E T^{-1} P \widehat{\iota T}$. Observe that $Q(\fbl[E])\subset \delta_E(E)$ and $Q^2=Q$. In addition,
$$
Q\delta_E(x)=\delta_E T^{-1} P \widehat{\iota T}\delta_E(x)=\delta_ET^{-1}P\iota T(x)=\delta_E T^{-1}T(x)=\delta_E(x), \text{ for } x\in E,
$$
so $Q(\fbl[E])=\delta_E(E)$. Finally, note that
$$
\|Q\|=\|\delta_E T^{-1} P \widehat{\iota T}\|=\|T^{-1} P \widehat{\iota T}\|\leq \|T^{-1}\|\|P\|\|\widehat{\iota T}\|=\|T^{-1}\|\|P\|\|T\|\leq C_1C_2. \eqno\qedhere
$$
\end{proof}

\begin{rem}\label{rem:complemented-duals}
Consider the category of \textit{dual Banach spaces} together with the \textit{adjoint operators} (or equivalently, $w^*$-continuous linear maps). Given a Banach space $E$, let $J_E:E\rightarrow E^{**}$ denote the canonical embedding of $E$ into its bidual. It is clear that every operator $T:E\rightarrow F^*$ can be uniquely extended to an adjoint operator $\overset{*}{T}:E^{**}\rightarrow F^*$, given by $\overset{*}{T}=(T^*\circ J_F)^*$, in such a way that the following diagram commutes:
$$
\xymatrix{E^{**}\ar@{-->}[rd]^{\exists!\;\overset{*}{T}}&\\
     E\ar[r]^{T}\ar^{J_E}[u]& F^*}
$$
Moreover, $\|\overset{*}{T}\|=\|T\|$. Thus, we can consider the pair $(E^{**},\: J_E)$ as the \textit{free dual Banach space generated by $E$}, as it was pointed out in Subsection 1.1 of \cite{OTTT24}. By mimicking the proof of the preceding proposition, it can be shown that if a Banach space $E$ is $C_1$-isomorphic to a $C_2$-complemented subspace of a dual Banach space, then $J_E(E)$ is $C_1C_2$-complemented in $E^{**}$. This is a well-known observation that can be found, for instance, in \cite[Remark, p. 16]{L64}.  
\end{rem}

In fact, using free Banach lattices again, we can identify precisely when a complemented subspace of Banach lattice is actually isomorphic to a Banach lattice:

\begin{prop}\label{prop:characterization-isomorphic-Banach-lattices}
A Banach space $E$ is linearly isomorphic to a Banach lattice if and only if there is an ideal $I$ in $\fbl[E]$ such that $\fbl[E]=\delta_E(E)\oplus I$. More precisely, if $T:E\to X$ is an isomorphism, then $\fbl[E]=\delta_E(E)\oplus \text{ker}(\widehat{T})$.
\end{prop}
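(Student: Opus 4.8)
The plan is to prove both implications, with the forward one being essentially a packaging of the universal property and the reverse one requiring a genuine construction.

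First I would handle the ``only if'' direction, which also yields the ``more precisely'' claim. Suppose $T:E\to X$ is an isomorphism onto a Banach lattice $X$. By the universal property of $\fbl[E]$, there is a unique lattice homomorphism $\widehat{T}:\fbl[E]\to X$ with $\widehat{T}\delta_E=T$ and $\|\widehat{T}\|=\|T\|$. Since $T$ is onto, $\widehat{T}$ is onto (its range is a sublattice containing $T(E)=X$), so $\widehat T$ is a surjective lattice homomorphism; hence $I:=\ker(\widehat{T})$ is a closed ideal in $\fbl[E]$. Now $\delta_E(E)\cap I=\{0\}$ because $\widehat{T}$ restricted to $\delta_E(E)$ is $T\delta_E^{-1}$, which is injective; and $\delta_E(E)+I=\fbl[E]$ because given $u\in\fbl[E]$ we have $\widehat{T}u\in X$, so $\widehat{T}u=T e$ for some $e\in E$, and then $u-\delta_E(e)\in\ker\widehat{T}=I$. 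Finally the projection onto $\delta_E(E)$ along $I$, namely $\delta_E T^{-1}\widehat{T}$, is bounded since each factor is (in fact $\|\delta_E T^{-1}\widehat{T}\|\le\|T^{-1}\|\|T\|$), so the decomposition $\fbl[E]=\delta_E(E)\oplus I$ is a genuine direct sum of closed subspaces with $I$ an ideal. This establishes the displayed refinement.

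Next I would prove the ``if'' direction, which is the main point and where I expect the real work to lie. Assume $\fbl[E]=\delta_E(E)\oplus I$ with $I$ a closed ideal. The key observation is that a complemented ideal in a Banach lattice is a \emph{projection band}: the band projection onto $I$ is continuous, so $I$ is a projection band and $\fbl[E]=I\oplus I^{d}$ as a lattice direct sum, where $I^{d}$ is the disjoint complement and is itself a closed ideal, hence a Banach lattice in the inherited order. I claim $I^{d}$ coincides with the algebraic complement $\delta_E(E)$ appearing in the hypothesis — or at least is linearly isomorphic to it. One way: the band projection $\pi:\fbl[E]\to I^{d}$ along $I$ and the (a priori different) bounded linear projection $p:\fbl[E]\to\delta_E(E)$ along $I$ both have kernel $I$, so $\pi$ restricts to a linear isomorphism $\delta_E(E)\to I^{d}$ (its inverse being $p|_{I^d}$, both bounded). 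Composing with $\delta_E:E\to\delta_E(E)$ gives a linear isomorphism of $E$ onto the Banach lattice $I^{d}$, as desired. I would spell out that $I^d$, being a closed ideal of a Banach lattice, is a Banach lattice under the restricted norm and order, and that ``linearly isomorphic to a Banach lattice'' is exactly this conclusion.

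The step I expect to be the main obstacle is the passage from ``$I$ is a \emph{complemented closed ideal}'' to ``$I$ is a projection band with disjoint complement $I^d$''. The cleanest route is: a closed ideal $I$ for which there exists a bounded linear projection $P$ with $\ker P=I$ — here $I$ need not itself be the range of a \emph{lattice} projection a priori, so one must argue that complementation of an ideal forces it to be a band (a closed ideal $I$ is a projection band iff $\fbl[E]=I\oplus I^d$, and for complemented ideals one shows $I^{dd}=I$ using that $I$ is closed and that the quotient $\fbl[E]/I\cong\delta_E(E)$ must be ``lattice-like'' enough — in fact any complemented closed ideal is automatically a projection band, a standard fact; see \cite[Section 1.a]{LT2-book} or \cite{M-N-book}). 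I would either cite this directly or, if a self-contained argument is wanted, note that since $\delta_E(E)\oplus I=\fbl[E]$ with $I$ an ideal, every element splits as $u=v+w$ with $w\in I$, and then an approximation/truncation argument with $|u|$ and the ideal structure of $I$ pins down $v$ as the band-complement component. Once $I^d$ is identified, the rest is the routine bookkeeping above.
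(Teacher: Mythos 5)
Your ``only if'' direction (and the refined statement) is correct and is essentially the paper's argument: $\widehat{T}$ extends $T$, its kernel is a closed ideal, and $\delta_E T^{-1}\widehat{T}$ is a bounded projection onto $\delta_E(E)$ with kernel $\ker(\widehat{T})$, giving $\fbl[E]=\delta_E(E)\oplus\ker(\widehat{T})$.

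The converse, however, has a genuine gap, and it is exactly at the step you flag as the main obstacle: the claim that a complemented closed ideal in a Banach lattice must be a projection band is not a standard fact — it is false. Take $X=c$, the Banach lattice of convergent sequences: $c_0$ is a closed ideal and is complemented via $x\mapsto x-(\lim_n x_n)\mathbf{1}$, but it is not a band, since the increasing sequence $\mathbf{1}_{\{1,\dots,n\}}\in c_0$ has supremum $\mathbf{1}\notin c_0$; in fact $c_0^{d}=\{0\}$, so $c\neq c_0\oplus c_0^{d}$. Hence there is no reason the ideal $I$ in the hypothesis should be a band, and your identification of $\delta_E(E)$ with $I^{d}$ collapses ($I^{d}$ may even be trivial); the sketched ``approximation/truncation'' fallback cannot rescue this, because the statement it is meant to establish is simply not true. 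The intended argument is much softer and avoids bands altogether: since $I$ is a norm-closed ideal, the quotient $\fbl[E]/I$ is a Banach lattice \cite[Corollary 1.3.14]{M-N-book}, and the quotient map restricted to $\delta_E(E)$ is a bounded linear bijection onto $\fbl[E]/I$ (injective because $\delta_E(E)\cap I=\{0\}$, surjective because $\delta_E(E)+I=\fbl[E]$), hence an isomorphism by the open mapping theorem; thus $E\cong\delta_E(E)\cong\fbl[E]/I$ is linearly isomorphic to a Banach lattice.
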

\begin{proof}
Let $X$ be a Banach lattice and suppose that there exists an isomorphism $T:E\to X$. By the universal property of $\fbl[E]$, there is a lattice homomorphism $\widehat{\text{T}}:\fbl[E]\to X$ such that $\widehat{\text{T}}\circ \delta_{E}=T$. Therefore, $P=\delta_{E}T^{-1} \widehat{T}$ defines a projection on $\fbl[E]$ with range $\delta_{E}(E)$ and $\text{ker}(P)=\text{ker}(\widehat{\text{T}})$ is an ideal $I$ in $\fbl[E]$. We have that $\fbl[E]=\delta_{E}(E)\oplus I$.

To prove the converse implication, recall that the quotient of a Banach lattice over a norm-closed ideal is a Banach lattice by \cite[Corollary 1.3.14]{M-N-book}. Thus, if we have a decomposition $\fbl[E]=\delta_E(E)\oplus I$, then $\fbl[E]/I$ is a Banach lattice isomorphic to $\delta_E(E)$ and therefore also to $E$.
\end{proof}

\begin{rem}
    Just by mimicking the above proof, one may show that a Banach space $E$ is isomorphic to a $p$-convex Banach lattice if and only if there is an ideal in $\fbl^{(p)}[E]$ such that $\fbl^{(p)}[E]=\delta_E(E)\oplus I$ (for any $1\leq p\leq \infty$). In particular, $E$ is isomorphic to an AM-space if and only if $C_{ph}(B_{E^*})=\fbl^{(\infty)}[E]=\delta_E(E)\oplus I$. 
\end{rem}

Consequently, at least at a theoretical level, free Banach lattices provide a possible criterion for distinguishing between Banach lattices and their complemented subspaces. In order to be able to use Proposition \ref{prop:characterization-isomorphic-Banach-lattices}, a natural first step would be to understand how closed ideals in free Banach lattices look like. It is well known that ideals in $C(K)$-spaces are \textit{sets of zeros}, that is, given an ideal $I\subset C(K)$, there is a closed subset $F\subset K$ such that $I=\{f\in C(K)\::\: f(t)=0 \text{ for all } t\in F\}$ (\cite[Proposition 2.1.9]{M-N-book}). We will show in the next proposition that \textit{the same} holds for $\fbl^{(\infty)}[E]$. Given a Banach space $E$, we will say that a set $K\subset B_{E^*}$ is \textit{positively homogeneous} in $B_{E^*}$ if $K=\mathbb{R}_+ K \cap B_{E^*}$, that is, if $0\in K$ and if $0\neq x^*\in K$, then $\lambda x^*\in K$ for every $\lambda\in \bigl(0,\frac{1}{\|x^*\|}\bigr]$. Let us start with a simple (and probably well-known) lemma for which we have not found a precise reference, so we include its proof below.

\begin{lem}
Let $X$ be an AM-space and $I\subset X$ a (closed) ideal. Then $X/ I$ is an AM-space.
\end{lem}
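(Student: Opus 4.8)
The plan is to use the Kakutani representation theorem to reduce to the concrete case $X = C(K)$, where the structure of ideals is already known, and then transport the conclusion back through the lattice isometry.

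First I would invoke Kakutani's theorem (\cite[Theorem 1.b.6]{LT2-book}): since $X$ is an AM-space, it is lattice isometric to a closed sublattice of some $C(K)$. To get an honest $C(K)$-space rather than a mere sublattice, one should be a little careful---if $X$ has a unit the representation is onto a full $C(K)$-space, and in general one can use $C_0(K)$ for a locally compact $K$; alternatively, it is cleanest to note that every AM-space embeds as an ideal... no, that is false, so instead I would simply argue directly inside a $C(K)$ (or $C_0(K)$) superspace as follows. Realize $X$ as a closed sublattice of $C(K)$ and let $I \subset X$ be a closed ideal. The key point is that $I$ need not be an ideal of $C(K)$, so one cannot immediately cite \cite[Proposition 2.1.9]{M-N-book}.

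The cleaner route, and the one I would actually carry out, is to work intrinsically: a closed ideal $I$ in any Banach lattice $X$ gives a quotient Banach lattice $X/I$ by \cite[Corollary 1.3.14]{M-N-book}, so the only thing to check is that the quotient lattice norm on $X/I$ satisfies the AM-identity $\|\dot x \vee \dot y\| = \max\{\|\dot x\|, \|\dot y\|\}$ for $\dot x, \dot y \geq 0$. The inequality $\|\dot x \vee \dot y\| \geq \max\{\|\dot x\|,\|\dot y\|\}$ is automatic since $\dot x, \dot y \leq \dot x \vee \dot y$ and the quotient is a Banach lattice. For the reverse, fix $\dot x, \dot y \geq 0$ in $X/I$ and representatives $x, y \geq 0$ in $X$ (positive representatives exist because the quotient map is a lattice homomorphism, so $\dot x = \dot{(x \vee 0)}$). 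Given $\varepsilon > 0$, using $\|\dot x\| = \inf_{i \in I}\|x + i\|$ and that $I$ is an ideal, one can choose positive representatives $x', y'$ of $\dot x, \dot y$ with $\|x'\| \leq \|\dot x\| + \varepsilon$ and $\|y'\| \leq \|\dot y\| + \varepsilon$; here the standard trick is to replace $x + i$ by $(x+i)^+ \wedge$ (something) or rather to use that if $\|x + i\|$ is small-ish then $x' := (x - |i|)^+ \vee 0$-type truncations stay in the same coset modulo $I$ because $I$ is an ideal. Then $x' \vee y'$ is a representative of $\dot x \vee \dot y$ and by the AM-property of $X$,
\[
\|\dot x \vee \dot y\| \leq \|x' \vee y'\| = \max\{\|x'\|, \|y'\|\} \leq \max\{\|\dot x\|, \|\dot y\|\} + \varepsilon.
\]
Letting $\varepsilon \to 0$ finishes the proof.

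The main obstacle is the bookkeeping in the previous paragraph: producing, for a given coset $\dot x \geq 0$, a positive representative whose norm is within $\varepsilon$ of $\|\dot x\|$, and doing so for $\dot x$ and $\dot y$ simultaneously so that their lattice supremum is still a representative of $\dot x \vee \dot y$. The cleanest way to handle this is to first observe that the canonical quotient map $q: X \to X/I$ is a lattice homomorphism, so $q(u \vee v) = q(u) \vee q(v)$; pick any positive representatives $x_0, y_0$, pick $i, j \in I$ with $\|x_0 + i\|$ and $\|y_0 + j\|$ near the infimum, and set $x' = (x_0) \wedge (|x_0 + i|)$ or more simply note $x_0 \wedge |x_0+i|$ lies in the coset $\dot x$ (since $x_0 - x_0\wedge|x_0+i| \leq x_0 - x_0 \wedge (x_0 + i) \leq $ something in $I$, using that $I$ is an ideal and absolute values of elements of cosets differing by $I$ differ by elements of $I$) and has norm $\leq \|x_0+i\|$. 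One should double-check this membership argument carefully, as it is exactly the place where the ideal (not merely sublattice or subspace) hypothesis is used, but it is routine. No appeal to Kakutani is then needed at all, which makes the lemma's proof self-contained.
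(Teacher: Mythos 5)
Your intrinsic argument is correct and is essentially the proof in the paper: choose near-optimal representatives, apply the AM-identity in $X$, and push down through the quotient map, which is a lattice homomorphism (your truncation $x_0\wedge|x_0+i|$ does land in the coset $\dot x$, since $0\leq x_0-x_0\wedge|x_0+i|\leq|i|\in I$). The only difference is that the paper skips the positive-representative bookkeeping entirely by taking arbitrary representatives $x+z$, $y+w$ and using $\max\{\|x+z\|,\|y+w\|\}=\bigl\||x+z|\vee|y+w|\bigr\|$ together with $\bigl||x+z|\vee|y+w|-|x|\vee|y|\bigr|\leq|z|+|w|\in I$; the Kakutani detour you start with is indeed a dead end, as you yourself note.
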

\begin{proof}
Recall that by \cite[Corollary 1.3.14]{M-N-book} we already know that $X/I$ is a Banach lattice, so we simply have to check that $\|\overline{x}\lor \overline{y}\|=\max\{\|\overline{x}\|, \|\overline{y}\|\}$ for every pair of positive elements $\overline{x}, \overline{y}\in X/I$. Let us fix $\varepsilon>0$ and $\overline{x},\overline{y}\in \bigl(X/I\bigr)_+$. Then, there exist $z,w\in I$ such that $\|x+z\|\leq \|\overline{x}\|+\varepsilon$ and $\|y+w\|\leq \|\overline{y}\|+\varepsilon$. Thus, we have that
\begin{equation}\label{eq:21}
 \varepsilon+ \max\{\|\overline{x}\|, \|\overline{y}\|\}\geq \max\{\|x+z\|,\|y+w\|\}=\bigl\||x+z|\lor |y+w|\bigr\|\geq \bigl\|\overline{|x+z|}\lor \overline{|y+w|}\bigr\|.   
\end{equation}
Now, observe that %$|x+z|\lor |y+w|-|x|\lor |y|\in I$. 
%Indeed, on the one hand we obtain that
%$$|x+z|\lor |y+w| - |x|\lor |y|\leq \bigl( |x|+|z|+|w|\bigr) \lor \bigl( |y|+|w|+|z|\bigr) - |x|\lor |y|=|z|+|w|,$$
%and, on the other hand, we have that
%$$|x+z|\lor |y+w| - |x|\lor |y|\geq \bigl( |x|-|z|-|w|\bigr) \lor \bigl( |y|-|w|-|z|\bigr) -|x|\lor |y|= -|z|-|w|,$$
$\bigl||x+z|\lor |y+w|- |x|\lor |y| \bigr|\leq |z|+|w|$ and, since  $|w|$ and $|z|$ are in $I$, we conclude that $|x+z|\lor |y+w|- |x|\lor |y|\in I$. Moreover, given that $\overline{x}$ and $\overline{y}$ are positive elements, we have that $|x+z|\lor |y+w|-x\lor y \in I$. Thus, from equation (\ref{eq:21}) we get that
$$ \varepsilon+ \max\{\|\overline{x}\|, \|\overline{y}\|\}\geq \bigl\|\overline{|x+z|}\lor \overline{|y+w|}\bigr\|=\| \overline{x}\lor \overline{y}\|,$$
and since $\varepsilon>0$, $\overline{x}$ and $\overline{y}$ were arbitrarily chosen, we conclude that $X/I$ is an AM-space.
\end{proof}

\begin{prop}\label{prop:ideals-free-infinity}
Let $E$ be a Banach space. Given an ideal $I$ in $\fbl^{(\infty)}(E)=C_{ph}(B_{E^*})$, there exists a $w^*$-closed positively homogeneous subset $K$ of $B_{E^*}$ such that $I=\{f\in C_{ph}(B_{E^*})\::\: \left.f\right|_{K}=0\}$. Moreover, the operator $T:C_{ph}(B_{E^*})/I\to C_{ph}(K)$ defined by $T\bar{f}:=\left.f\right|_K$ is a (surjective) lattice isometry.
\end{prop}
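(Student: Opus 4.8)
The plan is to mimic the classical description of ideals in $C(K)$-spaces, carrying over the argument to the positively homogeneous setting, and then to combine it with the preceding lemma to identify the quotient. First I would define the candidate set directly from the ideal: put
\[
K := \bigl\{x^*\in B_{E^*} \::\: f(x^*)=0 \text{ for all } f\in I\bigr\}.
\]
Since each $f\in I$ is $w^*$-continuous, $K$ is $w^*$-closed as an intersection of zero sets; and since each $f$ is positively homogeneous, $f(\lambda x^*)=\lambda f(x^*)$, so $K$ is positively homogeneous in $B_{E^*}$ (it contains $0$ because every $f\in H[E]$ vanishes at $0$). Clearly $I\subset J:=\{f\in C_{ph}(B_{E^*}) \::\: f|_K=0\}$, and $J$ is an ideal, so the whole content is the reverse inclusion $J\subset I$.

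The reverse inclusion is the main obstacle, and it is where the argument departs slightly from the scalar $C(K)$ case because of positive homogeneity and the lack of constants. The key point to establish is a local Urysohn/approximation statement: given $f\in J$, $\varepsilon>0$, and a point $x^*\in B_{E^*}\setminus K$, there is $g\in I$ with $g(x^*)\neq 0$; then, using that $I$ is an ideal and sublattice, one produces from finitely many such $g$'s (by a $w^*$-compactness covering argument on the set $\{|f|\geq \varepsilon\}$, which is $w^*$-compact and disjoint from $K$) a single $h\in I_+$ with $h\geq 1$ on $\{|f|\geq\varepsilon\}$ — care must be taken that $h$ can only be built to dominate on a set bounded away from $0$, but $\{|f|\geq\varepsilon\}$ is exactly such a set since $f$ is continuous and $f(0)=0$. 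Here one uses Krivine's functional calculus inside $C_{ph}(B_{E^*})$: $(f\wedge (n h))\to f$ uniformly on $B_{E^*}$ as $n\to\infty$ (splitting into the region $\{|f|\geq\varepsilon\}$, where $nh\geq n\to\infty$ dominates $|f|$, and $\{|f|<\varepsilon\}$, where the truncation changes $f$ by at most $\varepsilon$ in each coordinate), so $f\in \overline{I}=I$. The one genuinely new wrinkle versus $C(K)$ is checking that the relevant sets are positively homogeneous and $w^*$-compact so that finite subcovers and the functional calculus stay within $C_{ph}(B_{E^*})$; I expect this to be routine once phrased correctly, but it is the step that needs the most care.

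Finally, for the moreover part: the restriction map $R\colon C_{ph}(B_{E^*})\to C_{ph}(K)$, $Rf=f|_K$, is a lattice homomorphism with kernel exactly $I$ by the first part, and it is surjective — any positively homogeneous $w^*$-continuous function on the $w^*$-closed positively homogeneous set $K$ extends to one on $B_{E^*}$ (extend on the $w^*$-compact slice, e.g. via Tietze applied to a suitable normalization, then homogenize; or invoke that $C_{ph}(K)=\fbl^{(\infty)}[\cdot]$-type constructions give a norm-one extension). Hence $R$ factors through a lattice isomorphism $T\colon C_{ph}(B_{E^*})/I\to C_{ph}(K)$ with $T\bar f=f|_K$. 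That $T$ is an isometry is immediate from the quotient norm formula: $\|\bar f\| = \inf_{g\in I}\|f+g\| \ge \|(f+g)|_K\| = \|f|_K\|$, and conversely the norm-one extension of $f|_K$ realizes the infimum, giving $\|\bar f\| = \|f|_K\| = \|T\bar f\|$. (Alternatively, the preceding lemma already tells us $C_{ph}(B_{E^*})/I$ is an AM-space, so $T$ is at least a lattice isomorphism between AM-spaces, and then the quotient-norm computation upgrades it to an isometry.)
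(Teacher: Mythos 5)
Your first half (that $I$ equals the vanishing ideal of $K$) is correct, and it takes a genuinely different route from the paper: you transplant the classical $C(K)$ argument directly -- $\{|f|\ge\varepsilon\}$ is a $w^*$-compact set disjoint from $K$, finitely many moduli $|g_i|$ of elements of $I$ produce $h\in I_+$ with $h\ge 1$ there, and the truncations $f\wedge(nh)$ approximate $f$ within $\varepsilon$, so $f\in\overline{I}=I$ -- whereas the paper deduces the same inclusion abstractly, from the lemma that the quotient of an AM-space by a closed ideal is an AM-space, the fact that lattice homomorphisms on an AM-space are norming, and the identification of norm-one lattice homomorphisms on $\fbl^{(\infty)}[E]=C_{ph}(B_{E^*})$ with evaluations at points of $B_{E^*}$. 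Your version is more elementary and self-contained; the only points to make explicit are the reduction to $f\ge 0$ (or to $|f|$, using solidity of $I$), since for sign-changing $f$ the truncation $f\wedge(nh)$ need not be dominated by an element of $I$, and that solidity is what puts $f\wedge(nh)$ in $I$ in the first place. (Krivine's calculus is not needed: the lattice operations in $C_{ph}(B_{E^*})$ are pointwise.)

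The ``moreover'' part, however, has a genuine gap. Both your surjectivity claim and your isometry computation rest on the assertion that every $\varphi\in C_{ph}(K)$ admits a positively homogeneous $w^*$-continuous extension to $B_{E^*}$ of the same (or almost the same) norm, and neither justification you offer works. There is no ``$w^*$-compact slice'': $S_{E^*}$ is not $w^*$-compact in infinite dimensions, and neither the norm nor the normalization $x^*\mapsto x^*/\|x^*\|$ is $w^*$-continuous, so ``Tietze then homogenize'' destroys $w^*$-continuity; and there is no ready-made $\fbl^{(\infty)}$-type extension theorem to invoke -- an exact norm-one extension would amount to proximinality of $I$, which is nowhere established (the proposition itself only yields $(1+\varepsilon)$-extensions a posteriori). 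The paper obtains the nontrivial inequality $\|\bar f\|\le\|f|_K\|$ without any extension: since $C_{ph}(B_{E^*})/I$ is an AM-space, there is a norm-one lattice homomorphism attaining $\|\bar f\|$; composed with the quotient map it is a norm-one lattice homomorphism on $C_{ph}(B_{E^*})$, hence an evaluation $\widehat{x^*}$ at some $x^*\in B_{E^*}$, and this point necessarily lies in $K$, giving $\|\bar f\|=f(x^*)\le\|f|_K\|$. Surjectivity (which the paper also leaves implicit) then follows because the range of the isometry $T$ is a closed sublattice of $C_{ph}(K)$ containing all $\delta_x|_K$, and a Kakutani/Stone--Weierstrass two-point interpolation argument for positively homogeneous functions shows such a sublattice is dense. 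Without an argument of one of these kinds, your surjectivity and isometry claims remain unproved.
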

\begin{proof}
Define $K:=\{x^*\in B_{E^*}\::\: f(x^*)=0 \text{ for all } f\in I\}$. Observe that by the $w^*$-continuity on $B_{E^*}$ of the elements of $I\subset C_{ph}(B_{E^*})$, $K$ is a $w^*$-closed subset of $B_{E^*}$ and also, by the positive homogeneity of these functions, if $x^*\in K$, then $\lambda\cdot x^*\in K$ for every $\lambda\in \bigl[0,\frac{1}{\|x^*\|}\bigr]$. Let us write $I_K:=\{f\in C_{ph}(B_{E^*})\::\: \left.f\right|_{K}=0\}$.

It is clear that $I\subset I_{K}$, so let us see the reverse inclusion. Suppose that $f\notin I$, hence $\bar{f}\in C_{ph}(B_{E^*})/I$ is non-zero. By the previous lemma, we know that $C_{ph}(B_{E^*})/I$ is an AM-space, so there exists a norm-one lattice homomorphism $\overline{z^*}$ on $C_{ph}(B_{E^*})/I$ such that $\overline{z^*}(\bar{f})\neq 0$ (see, for instance, \cite[Proposition 5.4]{BGHMT}). Now denote by $Q:C_{ph}(B_{E^*})\to C_{ph}(B_{E^*})/I$ the canonical quotient map $Qg:=\bar{g}$, which is a lattice homomorphism \cite[Proposition 1.3.13]{M-N-book}. Therefore, $\overline{z^*}\circ Q\in B_{C_{ph}(B_{E^*})^*}$ is a lattice homomorphism and so there is $x^*\in B_{E^*}$ such that $\widehat{x^*}=\overline{z^*}\circ Q$. Note that $x^*\in K$, because for every $g\in I$ we have
$$
g(x^*)=\widehat{x^*}(g)=\overline{z^*}(Qg)=\overline{z^*}(\bar{0})=0.
$$
On the other hand, $f(x^*)=\overline{z^*}(\bar{f})\neq 0$, so $f\notin I_K$.

For the last statement of the proposition, let us consider the mapping $T:C_{ph}(B_{E^*})/I\to C_{ph}(K)$ defined by $T\bar{f}:=\left.f\right|_K$. It is clear that $T$ is well defined (since $I=I_K$) and that it is a lattice homomorphism. Let us check its surjectivity first. Recall that, by \cite[Proposition 5.4]{BGHMT}, $C_{ph}(B_{E^*})/I$ can be identified with $C_{ph}\left(K_{C_{ph}(B_{E^*})/I}\right)$ in a lattice isometric way, where $K_{C_{ph}(B_{E^*})/I}=\text{Hom}\bigl(C_{ph}(B_{E^*})/I\bigr)\cap B_{C_{ph}(B_{E^*})/I}$. Given $f\in C_{ph}(K)$, we define $\overline{f}:K_{C_{ph}(B_{E^*})/I}\to\mathbb{R}$ by $\overline{f}(\overline{z^*}):=f(Q^*\overline{z^*})$, for $\overline{z^*}\in K_{C_{ph}(B_{E^*})/I}$. Observe that $Q^*\overline{z^*}=\overline{z^*}\circ Q$ is a lattice homomorphism on $C_{ph}(B_{E^*})$ which is zero on $I$ and hence there exists a unique $x^*\in K$ such that $\widehat{x^*}=Q\overline{z^*}$. Therefore, $\overline{f}$ is well defined and it is easy to see that it is $w^*$-continuous positively homogeneous on $K_{C_{ph}(B_{E^*})/I}$, so $\overline{f}\in C_{ph}(B_{E^*})/I$. Finally, note that by construction $T\overline{f}=f$.

It remains to show that $T$ is norm-preserving. Fix $\bar{f}\in C_{ph}(B_{E^*})/I$ with $\|\bar{f}\|=1$. Given that $C_{ph}(B_{E^*})/I$ is an AM-space, there exists a norm-one lattice homomorphism $\overline{z^*}$ on $C_{ph}(B_{E^*})/I$ such that $\overline{z^*}(\bar{f})=1$. We have shown above that there is $x^*\in K$ such that $\widehat{x^*}=Q^*\overline{z^*}$, so we have
$$
1=\|\bar{f}\|=\overline{z^*}(\bar{f})=Q^*\overline{z^*}(f)=f(x^*)\leq \sup_{y^*\in K} |f(y^*)|=\|\left.f\right|_K\|.
$$
Now, suppose that $f(x^*)>1$ for some $x^*\in K$. Since $\|\bar{f}\|=1$, there exists $g\in I$ such that $f(x^*)>\|f+g\|$. Therefore, we have
$$
f(x^*)>\|f+g\|=\sup_{y^*\in B_{E^*}} |f(y^*)+g(y^*)|\geq |f(x^*)|,
$$
which is absurd.
\end{proof}

As a consequence of the previous proposition, we can obtain the following characterization of Banach spaces isomorphic to AM-spaces. This should be compared with a very similar and already known characterization of Banach spaces isomorphic to $C(K)$-spaces (see, for instance, \cite[Lemma 2.2]{PS1}).

\begin{cor}
A Banach space $E$ is isomorphic to an AM-space if and only if there exists a $w^*$-closed subset $K$ of $B_{E^*}$ such that $K$ is norming for $E$ and for every $x\in E$, there exists $y\in E$ such that $x^*(y)=|x^*(x)|$ for every $x^*\in K$. 
\end{cor}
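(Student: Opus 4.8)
The plan is to run both implications through the identification $\fbl^{(\infty)}[E]=C_{ph}(B_{E^*})$, with $\delta_E(x)(x^*)=x^*(x)$, using Proposition~\ref{prop:ideals-free-infinity} together with the Remark following Proposition~\ref{prop:characterization-isomorphic-Banach-lattices} for the substantive direction, and a direct sublattice argument for the other one.

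For the implication ``$E$ isomorphic to an AM-space $\Rightarrow$ such a $K$ exists'', I would first invoke the Remark after Proposition~\ref{prop:characterization-isomorphic-Banach-lattices}: $E$ is isomorphic to an AM-space precisely when $C_{ph}(B_{E^*})=\delta_E(E)\oplus I$ for some ideal $I$. Then I would feed $I$ into Proposition~\ref{prop:ideals-free-infinity}, obtaining a $w^*$-closed (in fact positively homogeneous) set $K\subseteq B_{E^*}$ with $I=\{f\in C_{ph}(B_{E^*}):f|_K=0\}$ together with a \emph{surjective} lattice isometry $C_{ph}(B_{E^*})/I\cong C_{ph}(K)$ implemented by restriction to $K$. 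Since $\delta_E(E)$ is a closed topological complement of $I=\ker Q$, where $Q\colon C_{ph}(B_{E^*})\to C_{ph}(B_{E^*})/I$ is the quotient map, the restriction $Q|_{\delta_E(E)}$ is a bounded bijection onto the quotient, hence an isomorphism; composing with the isometry above shows that the map $R\colon E\to C_{ph}(K)$, $R(x)=\delta_E(x)|_K$ (that is, $R(x)(x^*)=x^*(x)$ for $x^*\in K$), is a linear isomorphism \emph{onto} $C_{ph}(K)$. The two desired properties then fall out: since $R$ is bounded below, $\sup_{x^*\in K}|x^*(x)|=\|R(x)\|\ge c\|x\|$ for some $c>0$, so $K$ is norming; and since $R$ is surjective, for each $x\in E$ the function $x^*\mapsto|x^*(x)|$ — which is the restriction to $K$ of $|\delta_E(x)|\in C_{ph}(B_{E^*})$, hence an element of $C_{ph}(K)=R(E)$ — equals $R(y)$ for some $y\in E$, i.e. $x^*(y)=|x^*(x)|$ for all $x^*\in K$.

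For the converse I would argue directly, without using free Banach lattices. Given a $w^*$-closed norming set $K\subseteq B_{E^*}$ with the modulus property, $K$ is $w^*$-compact, so I would consider $J_K\colon E\to C(K)$, $J_K(x)(x^*)=x^*(x)$, where $C(K)$ denotes the space of (real) continuous functions on the compact Hausdorff space $(K,w^*)$. One checks $\|J_K(x)\|\le\|x\|$, and $K$ being norming makes $J_K$ bounded below, so $J_K$ is an isomorphism of $E$ onto a closed subspace $M:=J_K(E)$. The modulus hypothesis says exactly that for each $x$ there is $y$ with $J_K(y)=|J_K(x)|$, so $M$ is a subspace of $C(K)$ stable under taking modulus; via $u\vee v=\tfrac12(u+v+|u-v|)$ and $u\wedge v=\tfrac12(u+v-|u-v|)$ this makes $M$ a sublattice of $C(K)$. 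Finally, a closed sublattice of a $C(K)$-space is an AM-space, since it inherits the identity $\|u\vee v\|=\max\{\|u\|,\|v\|\}$ on its positive cone; hence $E\cong M$ is isomorphic to an AM-space.

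The hard part will be the forward direction, specifically the observation that the abstract direct-sum decomposition inside $C_{ph}(B_{E^*})$ translates, through the lattice isometry of Proposition~\ref{prop:ideals-free-infinity}, not merely into an \emph{isomorphic embedding} of $E$ into $C_{ph}(K)$ but into a \emph{surjection} onto $C_{ph}(K)$ — and it is this surjectivity that furnishes the witnesses $y$ for the modulus property. This is also why the converse must be handled separately: the corollary's hypotheses force $J_K(E)$ only to be a sublattice of $C(K)$, not all of $C_{ph}(K)$, so Proposition~\ref{prop:ideals-free-infinity} cannot be applied in reverse, and the elementary ``closed sublattice of a $C(K)$-space'' remark is the right substitute. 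Tracking the various isomorphism constants is routine and introduces no difficulty.
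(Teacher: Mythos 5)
Your proposal is correct and follows essentially the same route as the paper: the forward direction runs through $\fbl^{(\infty)}[E]=C_{ph}(B_{E^*})$ and the surjective restriction isometry of Proposition~\ref{prop:ideals-free-infinity} (the paper phrases it via the kernel of $\widehat{T}$ rather than the Remark's ideal decomposition, but these are the same ideal), and the converse is the paper's direct argument that the modulus hypothesis makes $x\mapsto\delta_x|_K$ an isomorphism onto a closed sublattice of $C(K)$, hence an AM-space. Your extra detail on why the decomposition forces surjectivity onto $C_{ph}(K)$ merely fills in a step the paper leaves implicit.
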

\begin{proof}
Suppose that $E$ is isomorphic to an AM-space $X$ and let $T:E\to X$ be an isomorphism. By the universal property of $\fbl^{(\infty)}[E]=C_{ph}(B_{E^*})$, there exists a unique lattice homomorphism $\widehat{T}:C_{ph}(B_{E^*})\to X$ such that $\widehat{T}\delta_E=T$. As $\text{ker}(\widehat{T})$ is an ideal in $C_{ph}(B_{E^*})$, by the previous proposition there is $w^*$-closed positively homogeneous subset $K$ of $B_{E^*}$ such that $\bar{f}\in C_{ph}(B_{E^*})/\text{ker}(\widehat{T})\mapsto \left.f\right|_K\in C_{ph}(K)$ defines a surjective lattice isometry. Therefore, the mapping $x\in E\mapsto \left.\delta_x\right|_K\in C_{ph}(K)$ is an isomorphism, so $K$ satisfies the desired conditions.

Conversely, let $K$ be a $w^*$-closed subset of $B_{E^*}$ which is norming for $E$ and for every $x\in E$ we can find $y\in E$ such that $x^*(y)=|x^*(x)|$ for all $x^*\in K$. Let us define $T:E\to C(K)$ by $Tx:=\delta_x$, where $\delta_x(x^*):=x^*(x)$ for $x^*\in K$. Since $K$ is norming, we deduce that $T$ is bounded below, so $T$ is an isomorphism onto its range. Moreover, note that $T(X)$ is a sublattice of $C(K)$ since, by hypothesis, for every $x\in X$, there is $y\in X$ such that $Ty=|\delta_x|$. Therefore, $T(X)$ is an AM-space.
\end{proof}

However, the description of ideals in free Banach lattices seems more complicated than in $C(K)$-spaces. We will now see that ideals in free Banach lattices need not be \textit{sets of zeros} (in the sense of Proposition \ref{prop:ideals-free-infinity}); in fact, an ideal in $\fbl[E]$ can have no zeros. To illustrate this circumstance, we will use the following simple observation:

\begin{prop}\label{prop:kernel-zero}
    Let $X$ be a Banach lattice. $x^*\in X^*$ is a lattice homomorphism if and only if $\text{ker}(\beta)\subset \text{ker}(\widehat{x^*})$, where $\beta:\fbl[X]\to X$ is the unique lattice homomorphism such that $\beta\circ\delta_X=\text{id}_X$.
\end{prop}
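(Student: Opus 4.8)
The plan is to prove both implications by exploiting the universal property of $\fbl[X]$, together with the fact that lattice homomorphisms $X \to \mathbb{R}$ correspond exactly to positive functionals whose kernel is an ideal (equivalently, which are multiplicative in the appropriate sense). Write $\beta : \fbl[X] \to X$ for the canonical lattice homomorphism with $\beta \circ \delta_X = \operatorname{id}_X$, which exists by the universal property applied to the identity operator on $X$.

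For the forward implication, suppose $x^* \in X^*$ is a lattice homomorphism. Then by the universal property of $\fbl[X]$ applied to $x^* : X \to \mathbb{R}$ (viewing $\mathbb{R}$ as a Banach lattice), there is a unique lattice homomorphism $\widehat{x^*} : \fbl[X] \to \mathbb{R}$ with $\widehat{x^*} \circ \delta_X = x^*$. On the other hand, since $x^*$ is a lattice homomorphism, the composition $x^* \circ \beta : \fbl[X] \to \mathbb{R}$ is also a lattice homomorphism, and it satisfies $(x^* \circ \beta) \circ \delta_X = x^* \circ \operatorname{id}_X = x^*$. By the uniqueness part of the universal property, $\widehat{x^*} = x^* \circ \beta$. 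Hence $\ker(\beta) \subseteq \ker(x^* \circ \beta) = \ker(\widehat{x^*})$, as required. (Note this direction does not even use that $\widehat{x^*}$ is a lattice homomorphism; it only uses that $\widehat{x^*}$ is the unique \emph{operator} extending $x^*$ along $\delta_X$, which already forces $\widehat{x^*} = x^* \circ \beta$ once we know $x^* \circ \beta$ extends $x^*$.)

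For the converse, assume $\ker(\beta) \subseteq \ker(\widehat{x^*})$. The key observation is that this inclusion lets us factor $\widehat{x^*}$ through $\beta$: since $\beta$ is a surjective lattice homomorphism (surjectivity because $\beta \circ \delta_X = \operatorname{id}_X$), the induced map $\fbl[X]/\ker(\beta) \to X$ is a lattice isometry, and $\widehat{x^*}$, vanishing on $\ker(\beta)$, descends to a \emph{linear} functional $\phi : X \to \mathbb{R}$ with $\phi \circ \beta = \widehat{x^*}$. Precomposing with $\delta_X$ gives $\phi = \phi \circ \beta \circ \delta_X = \widehat{x^*} \circ \delta_X = x^*$, so in fact $x^* \circ \beta = \widehat{x^*}$. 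Now I use that $\widehat{x^*}$ is a lattice homomorphism and $\beta$ is a surjective lattice homomorphism: for any $u, v \in X$ pick $f, g \in \fbl[X]$ with $\beta(f) = u$, $\beta(g) = v$; then
$$
x^*(u \vee v) = x^*(\beta(f) \vee \beta(g)) = x^*(\beta(f \vee g)) = \widehat{x^*}(f \vee g) = \widehat{x^*}(f) \vee \widehat{x^*}(g) = x^*(u) \vee x^*(v),
$$
using that $\beta$ preserves $\vee$ and then that $\widehat{x^*}$ does. Hence $x^*$ is a lattice homomorphism.

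The step requiring the most care is the factorization in the converse direction: one must confirm that $\beta$ is genuinely surjective (immediate from $\beta \delta_X = \operatorname{id}_X$) and that a linear functional vanishing on $\ker(\beta)$ factors through the quotient $\fbl[X]/\ker(\beta)$, which is then linearly isomorphic to $X$ via $\beta$. This is standard linear algebra/functional analysis, so the proof is essentially a diagram chase once the universal property is invoked correctly; the only subtlety is remembering that the uniqueness clause of the universal property applies to operators (not just lattice homomorphisms) in the forward direction.
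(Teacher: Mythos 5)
Your proof is correct and follows essentially the same route as the paper: the forward direction identifies $\widehat{x^*}=x^*\circ\beta$ by uniqueness of the lattice homomorphism extension, and the converse transfers the lattice homomorphism property from $\widehat{x^*}$ to $x^*$ by factoring through $\ker(\beta)$ (the paper phrases this via the quotient $\fbl[X]/\ker(\beta)$ and the inverse $Q\delta_X$ of $\bar{\beta}$, while you verify the identity $x^*(u\vee v)=x^*(u)\vee x^*(v)$ directly using surjectivity of $\beta$, which amounts to the same computation). One caveat: your parenthetical remark in the forward direction is false as stated — the uniqueness clause of the universal property applies only to \emph{lattice homomorphism} extensions, not to arbitrary operators, since $\delta_X(X)$ is merely a closed subspace (far from dense) of $\fbl[X]$ and linear extensions of $x^*$ are highly non-unique; your main argument is unaffected because it correctly uses that $x^*\circ\beta$ is a lattice homomorphism (the hypothesis of that implication) agreeing with $\widehat{x^*}$ on $\delta_X(X)$.
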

\begin{proof}
Suppose that $x^*:X\to\mathbb{R}$ is a lattice homomorphism. Then, $x^*\circ\beta$ and $\widehat{x^*}$ are lattice homomorphisms on $\fbl[X]$ which clearly agree on $\delta_X(X)$. By uniqueness of the extension, they actually coincide on $\fbl[X]$, so $\widehat{x^*}=x^*\circ\beta$. This shows that $\text{ker}(\beta)\subset\text{ker}(\widehat{x^*})$.

Now, assume that $\text{ker}(\beta)\subset\text{ker}(\widehat{x^*})$. Then the functional $\overline{x^*}:\fbl[X]/\text{ker}(\beta)\to \mathbb{R}$, given by $\overline{x^*}(\bar{f}):=f(x^*)$, is a well-defined lattice homomorphism on $\fbl[X]/\text{ker}(\beta)$. On the other hand, note that the mapping $\bar{\beta}:\fbl[X]/\text{ker}(\beta)\to X$ defined by $\bar{\beta}(\bar{f})=\beta f$ is a bijective lattice homomorphism and it is not difficult to determine its inverse explicitly: the composition $Q\delta_X:X\to\fbl[X]/\text{ker}(\beta)$, where $Q:\fbl[X]\to \fbl[X]/\text{ker}(\beta)$ stands for the canonical quotient defined by $Qf:=\bar{f}$. Indeed, for every $x\in X$, we have
$$
\bar{\beta}Q\delta_X(x)=\bar{\beta}(\overline{\delta_X(x)})=\beta\delta_X(x)=x.
$$
Also, for $f\in\fbl[X]$, we have
$$
Q\delta_X\bar{\beta} (\bar{f})=Q\delta_X\beta (f)=Q(f)=\bar{f},
$$
where the second-to-last identity follows from the fact that $\beta\delta_X=id_X$ and $ker(Q)=ker(\beta)$. Thus, $Q\delta_X$ is also a lattice homomorphism, so $\overline{x^*}\circ Q\delta_X$ is a lattice homomorphism on $X$. But observe that
$$
\overline{x^*}\circ Q\delta_X(x)=\overline{x^*}\bigl(\overline{\delta_X(x)}\bigr)=\delta_X(x)(x^*)=x^*(x),\quad \text{for } x\in X,
$$
so $x^*=\overline{x^*}\circ Q\delta_X$ and this shows that $x^*$ is a lattice homomorphism.
%We will see that it is also norm-preserving. Fix any $\bar{f}\in \fbl[X]/\text{ker}(\beta)$. For every $g\in \text{ker}(\beta)$ we have
%$$
%\|\bar{\beta}(\bar{f})\|=\|\beta(f+g)\|\leq \|f+g\|,
%$$
%so by taking infimum over $g\in \text{ker}(\beta)$ we infer that $\|\bar{\beta}(\bar{f})\|\leq \|\bar{f}\|$. Furthermore, it is clear that the inverse of $\bar{\beta}$ is $Q\delta_X$, where $Q:\fbl[X]\to \fbl[X]/\text{ker}(\beta)$ stands for the canonical quotient defined by $Qf:=\bar{f}$. Indeed, for every $x\in X$, $\bar{\beta}Q\delta_X(x)=\bar{\beta}(\overline{\delta_X(x)})=\beta\delta_X(x)=x$. As $\|Q\delta_X\|\leq 1$, we conclude that $\|\bar{\beta}\bar{f}\|=\|\bar{f}\|$ for all $\bar{f}\in \fbl[X]/\text{ker}(\beta)$. 
\end{proof}

Consequently, if we take a Banach lattice $X$ which does not have lattice homomorphisms (for example, $L_p[0,1]$, for any $1\leq p<\infty$), then $\text{ker}(\beta)$ is an ideal in $\fbl[X]$ which does not have zeros (in the sense that for every non-zero $x^*\in B_{X^*}$ we can find $f\in \text{ker}(\beta)$ such that $f(x^*)\neq 0$).

Comparing the previous proposition with Proposition \ref{prop:ideals-free-infinity} it is natural to wonder whether having a decomposition $\fbl[X]=\delta_X(X)\oplus \text{ker}(\beta)$, with $\text{ker}(\beta)$ being a set of zeros, implies that $X$ is an AM-space. This does not necessarily have to be the case, as the following shows.

\begin{prop}\label{prop:kernel-separate}
    Given a Banach lattice $X$,  $\text{Hom}(X,\mathbb R)$ separates the points of $X$ if and only if there exists a $w^*$-closed positively homogeneous subset $K$ of $B_{X^*}$ such that $\text{ker}(\beta)=\{f\in \fbl[X]\::\: \left.f\right|_{K}=0\}$. In this situation $K=\text{Hom}(X,\mathbb{R})\cap B_{X^*}$.
\end{prop}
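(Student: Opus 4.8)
The plan is to show that $K=\text{Hom}(X,\mathbb R)\cap B_{X^*}$ is the set one wants, and that it is the only possibility. First I would treat the implication ``$\Rightarrow$'' together with the identification of $K$. Assuming $\text{Hom}(X,\mathbb R)$ separates points, put $K:=\text{Hom}(X,\mathbb R)\cap B_{X^*}$. This set is $w^*$-closed, since the identity ``$x^*(|x|)=|x^*(x)|$ for all $x\in X$'' characterising lattice homomorphisms is preserved under $w^*$-limits and $B_{X^*}$ is $w^*$-compact; it is positively homogeneous because $0$ and every positive multiple of a lattice homomorphism is again one, so indeed $K=\mathbb R_+K\cap B_{X^*}$. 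For the equality $\text{ker}(\beta)=\{f\in\fbl[X]:f|_K=0\}$: if $f\in\text{ker}(\beta)$ and $x^*\in K$ then $\widehat{x^*}=x^*\circ\beta$ by Proposition~\ref{prop:kernel-zero}, hence $f(x^*)=\widehat{x^*}(f)=x^*(\beta f)=0$; conversely, if $f|_K=0$ then positive homogeneity forces $f$ to vanish at every lattice homomorphism $x^*\in X^*$, so $x^*(\beta f)=\widehat{x^*}(f)=f(x^*)=0$ for all such $x^*$, whence $\beta f=0$ as they separate points.

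Next, for ``$\Leftarrow$'', suppose some $w^*$-closed positively homogeneous $K$ with $\text{ker}(\beta)=\{f:f|_K=0\}$ is given. Every $x^*\in K$ satisfies $\text{ker}(\beta)\subseteq\text{ker}(\widehat{x^*})$ and so is a lattice homomorphism by Proposition~\ref{prop:kernel-zero}; thus $K\subseteq K_0:=\text{Hom}(X,\mathbb R)\cap B_{X^*}$. Also $\text{Hom}(X,\mathbb R)$ separates points, because for $x\ne y$ we have $\beta(\delta_{x-y})=x-y\ne 0$, so $\delta_{x-y}\notin\text{ker}(\beta)=\{f:f|_K=0\}$, i.e.\ $x^*(x-y)\ne 0$ for some $x^*\in K$. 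By the first paragraph, $\text{ker}(\beta)=\{f:f|_{K_0}=0\}$ as well, so $\{f:f|_K=0\}=\{f:f|_{K_0}=0\}$; combined with $K\subseteq K_0$, the entire statement is reduced to proving the inclusion $K_0\subseteq K$.

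To establish $K_0\subseteq K$, fix a nonzero $z^*\in K_0$. Since $\widehat{z^*}$ vanishes on $\text{ker}(\beta)=\{f:f|_K=0\}$, the formula $\psi(f|_K):=f(z^*)$ is a well-defined lattice homomorphism on the sublattice $L:=\{f|_K:f\in\fbl[X]\}$ of $C(K)$, with $\psi(\delta_x|_K)=z^*(x)$ for all $x\in X$. I would then represent $\psi$ by a positive Radon measure $\nu$ on the $w^*$-compact set $K$; this gives $\int_K w^*(x)\,d\nu(w^*)=z^*(x)$ for every $x\in X$, i.e.\ $z^*$ is the barycenter of $\nu$. Now $\text{supp}\,\nu$ must lie in the ray $\mathbb R_+z^*\cap K$: if some $w^*_0\in\text{supp}\,\nu$ were not a positive multiple of $z^*$, then $\text{ker}\,z^*\ne\text{ker}\,w^*_0$, so there is $x_0\in\text{ker}\,z^*$ with $w^*_0(x_0)\ne 0$; as $z^*(|x_0|)=|z^*(x_0)|=0$ forces $z^*(x_0^{\pm})=0$, after replacing $x_0$ by $x_0^{+}$ or $x_0^{-}$ and rescaling we may assume $x_0\ge 0$ and $w^*_0(x_0)=1$. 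Then $w^*(x_0)\ge 0$ for every $w^*\in K$ (these are positive functionals) and $\int_K w^*(x_0)\,d\nu=z^*(x_0)=0$, so $w^*(x_0)=0$ for $\nu$-almost every $w^*$, contradicting that $\{w^*\in K:w^*(x_0)>\tfrac12\}$ is a $w^*$-neighbourhood of $w^*_0$ of positive $\nu$-measure. Hence $\nu$ is carried by $\mathbb R_+z^*\cap K$, and since its barycenter $z^*$ is nonzero this set contains some $tz^*$ with $t>0$; positive homogeneity of $K$ then yields $z^*=\tfrac1t(tz^*)\in K$.

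The step I expect to be the main obstacle is the representation of $\psi$ by a measure on $K$. Because every element of $L$ vanishes at $0\in K$ (it is the restriction of a positively homogeneous function), $L$ does not majorise $C(K)$, so the Kantorovich extension theorem does not apply directly; and if $K$ is not norming for $X$ then $\psi$ need not even be $\|\cdot\|_{C(K)}$-bounded. I would try to circumvent this either by running the argument on $C_0(K\setminus\{0\})$ together with a rescaling exploiting positive homogeneity, or, equivalently, by showing directly that $z^*$ belongs to the $w^*$-closed convex cone generated by $K$ in $X^*$; there $z^*$, being a lattice homomorphism and hence an atom of $X^*$, generates an extreme ray, so a Milman-type argument should place it in $K$. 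Making one of these precise is where the real work lies; the remaining ingredients above are routine once it is in hand.
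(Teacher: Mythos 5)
Your first two paragraphs are correct and essentially reproduce the paper's argument: in the forward direction you take $K=\text{Hom}(X,\mathbb R)\cap B_{X^*}$, check it is $w^*$-closed and positively homogeneous, and deduce $Z_K:=\{f\in\fbl[X]:\ f|_K=0\}\subseteq\ker(\beta)$ from Proposition \ref{prop:kernel-zero} plus separation of points (the paper phrases this through the decomposition $f=\delta_X(\beta f)+g$, you phrase it through $\widehat{x^*}=x^*\circ\beta$; it is the same computation), and in the reverse direction you observe, exactly as in the paper, that every $x^*\in K$ is a lattice homomorphism and that $\delta_X(X)\cap Z_K=\{0\}$ forces $K$, hence $\text{Hom}(X,\mathbb R)$, to separate points.

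The problem is your third paragraph. You read the closing sentence as a uniqueness assertion for an arbitrary witness $K$ and try to prove $\text{Hom}(X,\mathbb R)\cap B_{X^*}\subseteq K$, but the argument breaks down precisely where you say it does: the functional $\psi(f|_K):=f(z^*)$ need not be sup-norm bounded on $L=\{f|_K:\ f\in\fbl[X]\}$ unless $K$ is norming (which is not part of the hypothesis), $L$ does not majorize $C(K)$, so neither Riesz representation nor a Kantorovich-type extension produces the measure $\nu$, and without $\nu$ the barycenter/support argument has nothing to run on. Moreover, the hypothesis you are exploiting contains less than it appears to: by Proposition \ref{prop:kernel-zero}, $f(z^*)=z^*(\beta f)$ for every $f\in\fbl[X]$ and every lattice homomorphism $z^*$, so for any $K\subseteq\text{Hom}(X,\mathbb R)\cap B_{X^*}$ one has $Z_K=\beta^{-1}\bigl(\{x\in X:\ z^*(x)=0\ \text{for all}\ z^*\in K\}\bigr)$, and the equality $Z_K=\ker(\beta)$ says exactly that $K$ separates the points of $X$ and nothing more. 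Hence the inclusion you are chasing is equivalent to the standalone claim that no proper $w^*$-closed positively homogeneous subset of $\text{Hom}(X,\mathbb R)\cap B_{X^*}$ can separate points, which your proposal does not establish. For comparison, the paper does not prove (or need) this either: its proof only shows that the canonical set $K=\text{Hom}(X,\mathbb R)\cap B_{X^*}$ is a witness in the forward direction and that any witness is contained in $\text{Hom}(X,\mathbb R)\cap B_{X^*}$ and separates points; the final sentence of the statement is to be understood as identifying that canonical witness, not as the two-sided identification your reading would require. So: the equivalence is fully proved and matches the paper; the extra uniqueness claim remains open in your write-up, and the proposed route to it would need a genuinely new idea.
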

\begin{proof}
    Let us assume that $\text{Hom}(X,\mathbb{R})$ separates the points of $X$ and define $K:=\text{Hom}(X,\mathbb{R})\cap B_{X^*}$, which is a $w^*$-closed positively homogeneous subset of $B_{X^*}$, and $Z_K:=\{f\in \fbl[X]\::\: \left.f\right|_K=0\}$. By the preceding proposition, $\text{ker}(\beta)\subset Z_K$. Now, take $f\in Z_K$. Since $f\in \fbl[X]$, there exists a unique decomposition $f=\delta_X(x)+g$, for some $x\in X$ and $g\in\text{ker}(\beta)$. Then, $\delta_X(x)\in Z_K$ and since $K$ separates the points of $X$ we deduce that $x=0$. Hence $f=g\in \text{ker}(\beta)$.
    
    For the reverse implication, suppose that $\text{ker}(\beta)=Z_K$, for some $w^*$-compact positively homogeneous subset $K$ of $B_{X^*}$. Proposition \ref{prop:kernel-zero} guarantees that each element $x^*\in K$ is a lattice homomorphism on $X$. Since $\delta_X(X)\cap Z_K=\{0\}$, given $x\neq 0$, there is $x^*\in K$ such that $x^*(x)=\delta_x(x^*)\neq 0$. Therefore, $K$ separates the points of $X$ and, in particular, so does $\text{Hom}(X,\mathbb R)$.    
\end{proof}

The next result gathers \textit{an isomorphic version} of Propositions \ref{prop:kernel-zero} and \ref{prop:kernel-separate}. Before proving this, let us recall the definition of a prominent class of lattice homomorphisms between free Banach lattices. Given two Banach spaces $E$ and $F$, the universal property of $\fbl[E]$ ensures that for every operator $T:E\to F$ there is a unique lattice homomorphism $\overline{T}:\fbl[E]\to\fbl[F]$ which makes the following diagram commutative
$$
	\xymatrix{\fbl [E]\ar@{-->}[rr]^{\overline T}&&\fbl[F]\\
	E\ar[u]_{\delta_E}\ar^{T}[rr]&& F\ar[u]_{\delta_F}}
$$
that is, $\overline{T}\delta_E=\delta_FT$. It is easy to check that $\overline{T}$ is a lattice isomorphism if and only $T$ is an isomorphism and also that this extension is given by $\overline{T}f=f\circ T^*$, for $f\in \fbl[E]$ \cite[Lemma 3.1]{OTTT24}. We refer the reader to
\cite[Section 3]{OTTT24} for more properties concerning these operators.

\begin{prop}\label{prop:isomorphic-enumeration-BLs}
Let $E$ be a Banach space and $X$ a Banach lattice. If $T:E\to X$ is an isomorphism, then $\overline{T}:\fbl[E]\to \fbl[X]$ is a lattice isomorphism (the only one satisfying $\overline{T}\delta_E=\delta_X T$), which is given by $\overline{T}f=f\circ T^*$, for $f\in \fbl[E]$. Let $\beta:\fbl[X]\to X$ and $\widehat{T}:\fbl[E]\to X$ be the unique lattice homomorphisms satisfying $\beta\delta_X=\text{id}_X$ and $\widehat{T}\delta_E=T$, respectively. The following assertions hold:
\begin{enumerate}
\item $\overline{T}\bigl(\text{ker}(\widehat{T})\bigr)=\text{ker}(\beta)$;
\item $\overline{T}\bigl(\text{ker}(\widehat{T^*x^*})\bigr)=\text{ker}(\widehat{x^*})$ for any $x^*\in X^*$;
\item $x^*\in \text{Hom}(X,\mathbb{R})$ if and only if $\text{ker}(\widehat{T})\subset \text{ker}(\widehat{T^*x^*})$;
    \item $\text{Hom}(X,\mathbb{R})$ separates the points of $X$ if and only if there exists a $w^*$-closed positively homogeneous subset $K$ of $B_{E^*}$ such that $\text{ker}(\widehat{T})=Z_K$.   
\end{enumerate} 
\end{prop}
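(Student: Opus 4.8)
The plan is to deduce everything from the uniqueness part of the universal property of $\fbl[E]$, which yields two elementary factorisation identities, and then to transport Propositions \ref{prop:kernel-zero} and \ref{prop:kernel-separate} from $\fbl[X]$ to $\fbl[E]$ along the lattice isomorphism $\overline{T}$. Throughout I would use the facts recalled just above: $\overline{T}$ is bijective with inverse $\overline{T^{-1}}$, $\overline{T}f=f\circ T^*$ and $\overline{T}^{-1}h=h\circ (T^{-1})^*=h\circ (T^*)^{-1}$, and for $y^*$ in $E^*$ (resp.\ $X^*$) the homomorphism $\widehat{y^*}$ is evaluation at $y^*$, i.e.\ $\widehat{y^*}(f)=f(y^*)$. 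First I would check that $\widehat{T}=\beta\circ\overline{T}$: both sides are lattice homomorphisms $\fbl[E]\to X$ and they agree on $\delta_E(E)$, since $\beta\overline{T}\delta_E=\beta\delta_X T=T=\widehat{T}\delta_E$, so they coincide by uniqueness of the lattice extension. Since $\overline{T}$ is a bijection, this gives at once $\overline{T}(\ker\widehat{T})=\ker\beta$, which is (1). The same argument applied to $x^*\in X^*$ shows that $\widehat{T^*x^*}$ and $\widehat{x^*}\circ\overline{T}$ are lattice homomorphisms $\fbl[E]\to\mathbb{R}$ agreeing on $\delta_E(E)$ (both send $x$ to $x^*(Tx)$), hence are equal, and applying $\overline{T}$ gives $\overline{T}(\ker\widehat{T^*x^*})=\ker\widehat{x^*}$, which is (2). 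For (3), by Proposition \ref{prop:kernel-zero} applied to $X$, $x^*\in\text{Hom}(X,\mathbb{R})$ if and only if $\ker\beta\subset\ker\widehat{x^*}$; since $\overline{T}$ is a bijection, an inclusion $A\subset B$ between subsets of $\fbl[E]$ holds iff $\overline{T}(A)\subset\overline{T}(B)$, so combining with (1) and (2) we get that $\ker\widehat{T}\subset\ker\widehat{T^*x^*}$ is equivalent to $\ker\beta\subset\ker\widehat{x^*}$, hence to $x^*\in\text{Hom}(X,\mathbb{R})$.

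For (4), the forward implication: Proposition \ref{prop:kernel-separate} gives $\ker\beta=\{h\in\fbl[X]:\ h|_{\widetilde{K}}=0\}$ with $\widetilde{K}=\text{Hom}(X,\mathbb{R})\cap B_{X^*}$. Using $\ker\widehat{T}=\overline{T}^{-1}(\ker\beta)$ and $\overline{T}^{-1}h=h\circ (T^*)^{-1}$, a function $g\in\fbl[E]$ lies in $\ker\widehat{T}$ precisely when $g$ vanishes on $T^*(\widetilde{K})$; since elements of $\fbl[E]$ are positively homogeneous, this is the same as $g$ vanishing on $K:=\mathbb{R}_+T^*(\widetilde{K})\cap B_{E^*}$, so $\ker\widehat{T}=Z_K$. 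The set $K$ is positively homogeneous and contains $0$; the point needing care is that it is $w^*$-closed. For this I would observe that $\mathbb{R}_+T^*(\widetilde{K})=T^*(\mathbb{R}_+\widetilde{K})=T^*(\text{Hom}(X,\mathbb{R}))$ (because $\mathbb{R}_+\widetilde{K}=\text{Hom}(X,\mathbb{R})$ as $\text{Hom}(X,\mathbb{R})$ is a cone), that $\text{Hom}(X,\mathbb{R})$ is a $w^*$-closed cone in $X^*$ (being cut out by the $w^*$-closed conditions $\phi\geq0$ and $\phi(x\vee y)=\phi(x)\vee\phi(y)$, $x,y\in X$), and that $T^*$ is a homeomorphism of $(X^*,w^*)$ onto $(E^*,w^*)$, its inverse $(T^*)^{-1}=(T^{-1})^*$ being $w^*$-to-$w^*$ continuous too; hence $T^*(\text{Hom}(X,\mathbb{R}))$ is $w^*$-closed and $K$, its intersection with the $w^*$-compact ball $B_{E^*}$, is $w^*$-closed. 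Conversely, if $\ker\widehat{T}=Z_K$ for such a $K$, then by (1) one has $\ker\beta=\overline{T}(Z_K)=\{h\in\fbl[X]:\ h|_{(T^*)^{-1}(K)}=0\}$; given $x\neq0$ in $X$ we have $\delta_X(x)\notin\ker\beta$ (since $\beta\delta_X(x)=x$), so there is $y^*\in(T^*)^{-1}(K)$ with $y^*(x)\neq0$, and since $f(y^*)=0$ for every $f\in\ker\beta$, Proposition \ref{prop:kernel-zero} shows $y^*\in\text{Hom}(X,\mathbb{R})$; hence $\text{Hom}(X,\mathbb{R})$ separates the points of $X$.

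Most of the argument is routine bookkeeping with the universal property; the only genuine subtlety I anticipate is the $w^*$-closedness of $K$ in the forward direction of (4), which is why I single out that $T^*$ restricts to a $w^*$-homeomorphism $X^*\to E^*$ and that $\text{Hom}(X,\mathbb{R})$ is a $w^*$-closed cone.
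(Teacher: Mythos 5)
Your proof is correct and follows essentially the same route as the paper's: the factorization identities $\beta\overline{T}=\widehat{T}$ and $\widehat{x^*}\,\overline{T}=\widehat{T^*x^*}$ obtained from uniqueness of lattice homomorphic extensions, and then transporting Propositions \ref{prop:kernel-zero} and \ref{prop:kernel-separate} along the lattice isomorphism $\overline{T}$. The only noteworthy difference is that you explicitly verify the $w^*$-closedness and positive homogeneity of $K=T^*\bigl(\text{Hom}(X,\mathbb{R})\bigr)\cap B_{E^*}$ in the forward direction of (4) (using that $\text{Hom}(X,\mathbb{R})$ is a $w^*$-closed cone and $T^*$ a $w^*$-homeomorphism), a point the paper asserts without comment.
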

\begin{proof} ($1$) The composition $\beta \overline{T}:\fbl[E]\to X$ is a lattice homomorphism such that for every $x\in E$ we have
$
\beta \overline{T}\bigl(\delta_E(x)\bigr)=\beta \delta_X (Tx)=Tx=\widehat{T}\bigl(\delta_E(x)\bigr)
$. Since the sublattice generated by $\{\delta_E(x)\::\:x\in E\}$ is dense in $\fbl[E]$, then $\beta \overline{T}=\widehat{T}$, so $\overline{T}\bigl(\text{ker}(\widehat{T})\bigr)\subset\text{ker}(\beta)$. For the other inclusion, observe that the latter implies that $\beta=\widehat{T}\overline{T}^{-1}$.

\noindent ($2$) This can be checked in a similar way to the previous claim using on this occasion the identity of lattice homomorphisms $\widehat{x^*}\overline{T}=\widehat{T^*x^*}$.

\noindent ($3$)  By Proposition \ref{prop:kernel-zero} we know that $x^*\in X^*$ is a lattice homomorphism if and only if $\text{ker}(\beta)\subset \text{ker}(\widehat{x^*})$. And it can be easily derived from ($1$) and ($2$) that the last-mentioned condition is equivalent to $\text{ker}(\widehat{T})\subset \text{ker}(\widehat{T^*x^*})$.

\noindent ($4$) Proposition \ref{prop:kernel-separate} states that $\text{Hom}(X,\mathbb{R})$ separates the points of $X$ is equivalent to the fact that $\text{ker}(\beta)=\{f\in\fbl[X]\::\: f(x^*)=0 \text{ for all } x^*\in\text{Hom}(X,\mathbb{R})\}$. So if $\text{Hom}(X,\mathbb{R})$ separates the points of $X$, then by ($1$) we have
$$
\text{ker}(\widehat{T})=\overline{T}^{-1}\bigl(\text{ker}(\beta)\bigr)=\left\{\overline{T}^{-1}f\::\: \overline{T}^{-1}f(T^*x^*)=0 \text{ for all } x^*\in\text{Hom}(X,\mathbb{R})\right\},
$$
and this shows that $\text{ker}(\widehat{T})=Z_K$ for $K=T^*\bigl(\text{Hom}(X,\mathbb{R})\bigr)\cap B_{E^*}$. Conversely, suppose that there is a $w^*$-closed positively homogeneous subset $K$ of $B_{E^*}$ such that $\text{ker}(\widehat{T})=Z_K$. Therefore, for any $z^*\in K$, $\text{ker}(\widehat{T})\subset\text{ker}(\widehat{z^*})$. Then, using ($1$) and ($2$), we obtain $\text{ker}(\beta)=\overline{T}\bigl(\text{ker}(\widehat{T})\bigr)\subset \overline{T}\bigl(\text{ker}(\widehat{z^*})\bigr)=\text{ker}(\widehat{(T^{-1})^*z^*})$, so by Proposition \ref{prop:kernel-zero} we conclude that $(T^{-1})^*(K)\subset \text{Hom}(X,\mathbb{R})$. Moreover, $(T^{-1})^*(K)$ separates the points of $X$. Indeed, given a non-zero $x\in X$, as $\delta_E(E)\cap Z_K=\{0\}$, there exists $z^*\in K$ such that
$$
0\neq \delta_E(T^{-1}x)(z^*)=\overline{T^{-1}}\delta_X(x)(z^*)=\delta_X(x)\bigl((T^{-1})^*z^*\bigr)=(T^{-1})^*z^*(x).
$$
Therefore, $\text{Hom}(X,\mathbb{R})$ separates the points of $X$.
\end{proof}

We have seen in the fourth assertion of the preceding result that being isomorphic to a Banach lattice whose set of lattice homomorphisms separates its points implies the existence of a $w^*$-compact $K\subset B_{E^*}$ such that $\fbl[E]=\delta_E(E)\oplus Z_K$. We will show now that the converse also holds.

\begin{prop}\label{prop:isomorphic-zeros}
    Let $E$ be a Banach space and suppose that there is a $w^*$-closed subset $K$ of $B_{E^*}$ such that $\fbl[E]=\delta_E(E)\oplus Z_K$. Then $E$ is isomorphic to a Banach lattice $X$ such that $\text{Hom}(X,\mathbb{R})$ separates the points of $X$.
\end{prop}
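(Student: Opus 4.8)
The plan is to show that $X:=\fbl[E]/Z_K$ is the required Banach lattice. The first step is to record that $Z_K=\{f\in\fbl[E]:f|_K=0\}$ is a closed ideal of $\fbl[E]$ for \emph{any} subset $K$ of $B_{E^*}$: solidity is immediate, since $|g|\le|f|$ in $\fbl[E]$ forces $|g(x^*)|\le|f(x^*)|$ for every $x^*\in E^*$, so $f\in Z_K$ implies $g\in Z_K$; and $Z_K$ is norm-closed because, as is immediate from the formula defining $\|\cdot\|_{\fbl[E]}$, one has $|h(x^*)|\le\|x^*\|\,\|h\|_{\fbl[E]}$ for all $h\in\fbl[E]$ and $x^*\in E^*$, so norm convergence implies pointwise convergence on $K$. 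Consequently, by \cite[Corollary 1.3.14]{M-N-book} the quotient $X=\fbl[E]/Z_K$ is a Banach lattice, and the canonical quotient map $Q:\fbl[E]\to X$ is a surjective lattice homomorphism.

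Next I would exploit the hypothesis $\fbl[E]=\delta_E(E)\oplus Z_K$. Since $\delta_E(E)$ and $Z_K$ are closed complementary subspaces, $Q$ restricts to a bounded linear bijection of $\delta_E(E)$ onto $X$, which is an isomorphism by the bounded inverse theorem; precomposing with the linear isometry $\delta_E:E\to\delta_E(E)$ shows that $E$ is linearly isomorphic to the Banach lattice $X$. (Alternatively, this is exactly the converse direction in Proposition \ref{prop:characterization-isomorphic-Banach-lattices}.)

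It then remains to check that $\mathrm{Hom}(X,\mathbb{R})$ separates the points of $X$. Given a non-zero $\bar f\in X$, that is, $f\in\fbl[E]\setminus Z_K$, there is $x^*\in K$ with $f(x^*)\neq 0$. Recall that the evaluation functional $\widehat{x^*}:\fbl[E]\to\mathbb{R}$, $g\mapsto g(x^*)$, is a lattice homomorphism --- it is the unique lattice homomorphism extending $x^*:E\to\mathbb{R}$, as used in Proposition \ref{prop:kernel-zero} --- and it vanishes on $Z_K$ precisely because $x^*\in K$. Hence $\widehat{x^*}$ factors through $Q$, producing a lattice homomorphism $\overline{x^*}:X\to\mathbb{R}$ with $\overline{x^*}\circ Q=\widehat{x^*}$ (the factored map is again a lattice homomorphism because $Q$ is a surjective lattice homomorphism), and $\overline{x^*}(\bar f)=f(x^*)\neq 0$. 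Since lattice homomorphisms are linear, separating points from $0$ is the same as separating points, so $\mathrm{Hom}(X,\mathbb{R})$ separates the points of $X$, as desired.

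I do not expect a serious obstacle here: the argument merely assembles tools already available in this section (evaluations as lattice homomorphisms via the universal property, quotients by closed ideals being Banach lattices, and the elementary topology of complemented subspaces, in the spirit of Propositions \ref{prop:characterization-isomorphic-Banach-lattices}, \ref{prop:ideals-free-infinity} and \ref{prop:kernel-zero}). The only points deserving a little care are checking that $Z_K$ is a closed ideal with no positive-homogeneity assumption on $K$, and that the functional obtained by factoring $\widehat{x^*}$ through the quotient is still a lattice homomorphism; one could, if desired, also make $X$ more concrete by identifying it with the closed sublattice of functions on $K$ generated by $\{\delta_E(x)|_K:x\in E\}$, although the quotient norm need not be the supremum norm, in contrast with the $\infty$-convex situation of Proposition \ref{prop:ideals-free-infinity}.
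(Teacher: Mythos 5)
Your proof is correct and follows essentially the same route as the paper: take $X=\fbl[E]/Z_K$, which is a Banach lattice since $Z_K$ is a closed ideal, and observe that the evaluations $\overline{x^*}$ at points $x^*\in K$ are lattice homomorphisms on $X$ that separate its points, with the decomposition giving $E\simeq\delta_E(E)\simeq X$. Your only (harmless) deviations are that you spell out the isomorphism $E\simeq X$ via the bounded inverse theorem, which the paper leaves implicit, and you get the separating functional directly from $f\notin Z_K$ rather than first writing $\bar f=\overline{\delta_E(x)}$ as the paper does.
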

\begin{proof}
First, note that $Z_K=\{f\in \fbl[E]\::\: f(x^*)=0 \text{ for all } x^*\in K\}$ is a closed ideal in $\fbl[E]$. Thus, by \cite[Corollary 1.3.14]{M-N-book}, $X:=\fbl[E]/Z_K$ is a Banach lattice. For every $x^*\in K$, we define $\overline{x^*}:X\to\mathbb{R}$ by $\overline{x^*}(\overline{f}):=f(x^*)$, which is a well-defined lattice homomorphism, so $\{\overline{x^*}\::\: x^*\in K\}\subset \text{Hom}(X,\mathbb{R})$. We will show that $\{\overline{x^*}\::\: x^*\in K\}$ separates the points of $X$. Take any non-zero $\bar{f}\in \fbl[E]/Z_K$. As $\fbl[E]=\delta_E(E)\oplus Z_K$, there exists $0\neq x\in E$ such that $\bar{f}=\overline{\delta_E(x)}$. In particular, given that $\delta_E(E)\cap Z_K=\{0\}$ there must be $x^*\in K$ such that $x^*(x)\neq 0$. As a result, we have
$$
\overline{x^*}(\bar{f})=\overline{x^*}\bigl(\overline{\delta_E(x)}\bigr)=\delta_E(x)(x^*)=x^*(x)\neq 0,
$$
and this shows that $\{\overline{x^*}\::\: x^*\in K\}\subset \text{Hom}(X,\mathbb{R})$ separates the points of $X$.
\end{proof}

Given an isomorphism $T:E\to X$ from a Banach space $E$ onto a Banach lattice $X$, we know by Proposition \ref{prop:characterization-isomorphic-Banach-lattices} that we have a decomposition $\fbl[E]=\delta_E(E)\oplus \text{ker}(\widehat{T})$. The ideal $\text{ker}(\widehat{T})$ is not a set of zeros unless $\text{Hom}(X,\mathbb{R})$ separates the points of $X$ (by Proposition \ref{prop:isomorphic-enumeration-BLs}). However, one could still wonder whether the following holds: Is there always a $w^*$-compact $K\subset B_{E^*}$ such that $\fbl[E]=\delta_E(E)\oplus Z_K$? If the latter were true, this would imply by the preceding proposition that every Banach lattice is isomorphic to a Banach lattice whose set of lattice homomorphisms separates its points. The next example shows that this is not always the case.

\begin{example}
 Let us consider $L_1[0,1]$ and suppose that there exists a Banach lattice $X$ which is isomorphic to $L_1[0,1]$ and such that $\text{Hom}(X,\mathbb{R})$ separates its points. By \cite[Proposition 2.1]{dHMST23}, $X$ is lattice isomorphic to an $L_1$-space, so we can directly assume that $X$ is an $L_1$-space. By \cite[Corollary to Theorem 9, Section 14]{Lacey-book}, $X$ must be lattice isometric to $\ell_1$, $L_1[0,1]$ or $L_1[0,1]\oplus_1\ell_1(\Gamma)$, where $|\Gamma|\leq\aleph_0$. Since we are assuming that the lattice homomorphisms of $X$ separates its points, then $X=\ell_1$. But $L_1[0,1]$ is not isomorphic to $\ell_1$, so we have arrive at a contradiction. It should be noted that this argument cannot be extended to $L_p[0,1]$ for $1<p<\infty$, since it is well known that for these spaces the Haar basis is unconditional \cite[Theorem 6.1.7]{AK-book}.
\end{example}

We conclude this section by pointing out that all the results shown here can be easily adapted to the complex setting, using the notion of free complex Banach lattice introduced in \cite{dHT23}. 

\section{The relevance of projection constants}\label{sec:constant}

Proposition \ref{prop:FBL-complemented} brings an additional peculiarity that we have not discussed so far: if $E$ is a $C$-complemented subspace of some Banach lattice (i.e., $E$ is the range of a projection of norm $C$), then $\delta_E(E)$ is complemented in $\fbl[E]$ \textit{with constant less than or equal to $C$}. The reason for taking into account the projection constant is that there are significant differences between what happens for the \textit{contractive case} ($C=1$) and the general case in some of the most relevant classes of Banach lattices. Let us recall some well-known results in this direction:
\begin{itemize}
    \item \textit{For any $1\leq p<\infty$, every $1$-complemented subspace of an $L_p$-space is isometric to an $L_p$-space.} This was proven by Tzafriri in 1969 in \cite{T69} (see also \cite{BL74}), extending to general measure spaces previous results for $L_p$-spaces over probability spaces due to Douglas \cite{Douglas} (case $p=1$) and Ando \cite{Ando} (cases $1<p\neq 2<\infty$). This result was generalized to Hilbert-valued $L_p$-spaces by Raynaud \cite{Raynaud}. We refer the reader to the extensive survey by Randrianantoanina \cite{R01} for more information on $1$-complementation in Köthe function spaces and sequence spaces.

    If we do not assume that the projections have norm $1$, the situation changes significantly. In this regard, we should mention that Bourgain, Rosenthal and Schechtman showed the existence of uncountably many mutually non-isomorphic complemented subspaces of $L_p[0,1]$, for any $1<p\neq 2<\infty$ \cite{BRS}. In contrast, for $p=1$, as we already mentioned, it is conjectured that the only possible complemented subspaces (up to isomorphism) are $\ell_1$ and $L_1[0,1]$ \cite[Conjecture 5.7.7]{AK-book}.

    \item \textit{Every separable $1$-complemented subspace of a $C(K)$-space is isomorphic to a $C(K)$-space.} This is a consequence of the following two facts: in \cite[Theorem 3 (i)]{LW69} it was shown that every $1$-complemented subspace of a $C(K)$-space is linearly isometric to some $C_\sigma(K)$-space; shortly after, Samuel \cite{Samuel} proved that separable $C_\sigma(K)$-spaces are isomorphic to $C(K)$-spaces (see also \cite[Lemma 5]{B73}). In 1973, Benyamini generalized the preceding result: every separable $G$-space is isomorphic to a $C(K)$-space \cite{B73}. Recall that a G-space is exactly a $1$-complemented subspace in an AM-space (up to a linear isometry) \cite[Theorem 3 (ii)]{LW69}. In the non-separable setting, this result is no longer true: $\PS$ is a $1$-complemented subspace of a $C(K)$-space which is not even isomorphic to a Banach lattice \cite{dHMST23, PS2}.

    For not necessarily contractive projections, the following \textit{conjecture} deserves to be mentioned again: Is every complemented subspace of $C[0,1]$ linearly isomorphic to $C(K)$? \cite[Conjecture 5.7.8]{AK-book}.
 
    \item \textit{For complex scalars, it was proven by Kalton and Wood in 1976 \cite{KW} that every $1$-complemented subspace of a Banach space with a $1$-unconditional basis must have a $1$-unconditional basis} (see also \cite{Flinn, Rosenthal-KW}). This theorem does not hold in the real case (see \cite{BFL} or the last example of \cite{L79}). However, the more general question of whether every complemented subspace of a space with an unconditional basis has an unconditional basis is still open in both real and complex cases \cite[Problem 1.d.5]{LT1-book}.
\end{itemize}

Since all norms on a finite-dimensional vector space are equivalent, every finite-dimensional Banach space is trivially isomorphic to a Banach lattice. In particular, finite-dimensional Banach spaces are complemented subspaces of Banach lattices. A naive question would be whether these spaces are complemented by some uniform constant (not depending on the dimension). Well-known examples show this is not the case: $n$-dimensional Schatten $p$-class operators $S_p^n$ for $p\neq 2$ \cite[Theorem 5.1]{GL74}, appropriate finite-dimensional subspaces of James space \cite{JT} (see also \cite[Theorem 34.3]{TJ-book}). Next result provides a similar argument making use of free Banach lattices.

\begin{prop}
For every $C\geq 1$, there exists a finite-dimensional Banach space which is not $C$-complemented in any Banach lattice.
\end{prop}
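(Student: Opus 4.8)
The plan is to reduce the statement to the classical fact that there is no uniform bound on the local unconditional structure constants of finite-dimensional Banach spaces. For a finite-dimensional Banach space $E$, let $\lambda_{\mathrm{lat}}(E)$ denote the infimum of $\|u\|\,\|v\|$ over all factorizations $\mathrm{id}_E=v\circ u$ with $u\colon E\to L$ and $v\colon L\to E$ for some (arbitrary) Banach lattice $L$; this infimum is finite, since $\mathrm{id}_E$ factors through the Banach lattice $\ell_2^{\dim E}$ (as already noted before the statement, every finite-dimensional space embeds complementably in a Banach lattice, just with a dimension-dependent constant).

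First I would record the trivial half of the argument. Suppose $E$ is $C$-complemented in a Banach lattice, i.e.\ there are an isometric embedding $j\colon E\to X$ into a Banach lattice $X$ and a projection $P\colon X\to X$ with $P(X)=j(E)$ and $\|P\|\le C$. Then $\mathrm{id}_E=(j^{-1}P)\circ j$ is a factorization of $\mathrm{id}_E$ through the Banach lattice $X$ with $\|j\|\cdot\|j^{-1}P\|\le 1\cdot C$, so $\lambda_{\mathrm{lat}}(E)\le C$. (Essentially the same computation gives $\lambda_{\mathrm{lat}}(E)\le C_1C_2$ if $E$ is merely $C_1$-isomorphic to a $C_2$-complemented subspace of a Banach lattice, so the precise meaning of ``$C$-complemented'' is immaterial here. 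One may equivalently route this step through Proposition \ref{prop:FBL-complemented}: such an $E$ has $\delta_E(E)$ which is $C$-complemented in the Banach lattice $\fbl[E]$, hence again $\lambda_{\mathrm{lat}}(E)\le C$.)

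Next I would invoke the classical ingredient: $\sup\{\lambda_{\mathrm{lat}}(E):\dim E<\infty\}=\infty$. Indeed, if some finite $C_0$ bounded $\lambda_{\mathrm{lat}}(E)$ for all finite-dimensional $E$, then in particular the identity of every finite-dimensional subspace of $\mathcal K(\ell_2)$ would factor through a Banach lattice with norm at most $C_0$; that is, $\mathcal K(\ell_2)$ would have GL-lust, contradicting \cite[Theorem 5.1]{GL74}. (Any Banach space failing GL-lust would do equally well, e.g.\ James' space \cite[Theorem 8]{Lacey-lust} or the Kalton--Peck space \cite{JLS80}.) Combining the two steps: given $C\ge 1$, pick a finite-dimensional Banach space $E$ with $\lambda_{\mathrm{lat}}(E)>C$; by the first step this $E$ is not $C$-complemented in any Banach lattice.

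I expect the only genuinely non-formal point to be the classical ingredient used in the third paragraph — the failure of a uniform local unconditional structure bound across finite-dimensional spaces, which imports the Gordon--Lewis estimates (or an equivalent); everything else is direct bookkeeping with factorization constants.
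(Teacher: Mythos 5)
Your proposal is correct in substance, but it takes a genuinely different route from the paper. The paper argues via James' space: it takes the initial segments $E_n$ of a monotone basis of $\mathcal{J}^{**}$ and shows that if all $E_n$ were uniformly $C$-complemented (equivalently, by Proposition \ref{prop:FBL-complemented}, uniformly complemented in $\fbl[E_n]$), then a $w^*$-ultrafilter limit of the projections $\widetilde{Q_n}\overline{P_n}$ inside $\fbl[\mathcal{J}^{**}]$ would yield a bounded projection onto $\delta_{\mathcal{J}^{**}}(\mathcal{J}^{**})$, contradicting that $\mathcal{J}$ is not complemented in any Banach lattice; so the local-to-global compactness step is carried out explicitly with the free-lattice machinery, and the finite-dimensional witnesses are explicit. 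You instead package that compactness inside the classical theory of local unconditional structure: your constant $\lambda_{\mathrm{lat}}(E)$ is (up to the standard equivalences) an l.u.st.-type constant, and its unboundedness over finite-dimensional spaces is exactly the failure of GL-lust for some space such as $\mathcal{K}(\ell_2)$ \cite{GL74}. The only point you compress — and you rightly flag it — is the implication that a uniform bound on $\lambda_{\mathrm{lat}}(E)$ for all finite-dimensional $E\subset X$ forces $X$ to have GL-lust: this is not purely formal, since GL-lust is defined via factorizations through finite-dimensional unconditional-basis spaces, and one needs either the classical fact that every Banach lattice has l.u.st., or an ultraproduct-plus-bidual argument producing a factorization of $\iota_X\colon X\to X^{**}$ through a Banach lattice (equivalently, $X^{**}$ complemented in a Banach lattice, the formulation quoted in the introduction and going back to \cite{GL74, FJT75}). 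Granting those classical equivalences, your argument is complete and somewhat shorter; what the paper's approach buys is a proof self-contained within the free-Banach-lattice framework of the survey, with concrete finite-dimensional examples (spans of initial segments of the basis of $\mathcal{J}^{**}$), whereas yours outsources the hard local theory to the l.u.st. literature and works with any single space failing GL-lust.
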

\begin{proof}
Let us consider the James space $\mathcal{J}$ (see \cite{J51} or \cite[Section 3.4]{AK-book}). The reason why we are interested in taking this particular Banach space is that it has the following three properties:
\begin{enumerate}
    \item $\mathcal{J}$ has a (monotone) basis.
    \item $\mathcal{J}$ is isometric to its bidual $\mathcal{J}^{**}$.
    \item $\mathcal{J}$ cannot be isomorphic to any complemented subspace of a Banach lattice.
\end{enumerate}

We will make use of $\mathcal{J}^{**}$ to construct a sequence of finite-dimensional spaces which cannot be uniformly complemented in their corresponding free Banach lattices. Note that $\mathcal{J}^{**}$ also satisfies the properties $(1)-(3)$. Let $(e_n)_{n=1}^\infty$ be a monotone basis of $\mathcal{J}^{**}$ with associated basis projections $(P_n)_{n=1}^\infty$ and let $E_n=\text{span}\{e_k\::\: 1\leq k\leq n\}$ denote the range of $P_n$.

 Suppose that there exists a constant $C\geq 1$ such that for every natural $n$ there is a projection $Q_n:\fbl[E_n]\to\fbl[E_n]$ with range $\delta_{E_n}(E_n)$ such that $\|Q_n\|\leq C$. For every $n\in\mathbb{N}$, $\overline{P_n}:\fbl[\mathcal{J}^{**}]\to \fbl[\mathcal{J}^{**}]$ defines a projection whose range is $\overline{\iota_n}(\fbl[E_n])$ (where $\iota_n$ stands for the canonical inclusion of $E_n$ into $\mathcal{J}^{**}$ as a subspace). Observe that $\overline{\iota_n}:\fbl[E_n]\to \fbl[\mathcal{J}^{**}]$ is a lattice isometric embedding given that $E_n$ is $1$-complemented in $\mathcal{J}^{**}$ \cite[Theorem 3.7]{OTTT24}. We will denote by $\widetilde{Q_n}$ the projection on $\overline{\iota_n}(\fbl[E_n])$ defined by $\widetilde{Q_n}(\overline{\iota_n}f)=\overline{\iota_n}(Q_n f)$, for every $n\in\mathbb{N}$ and every $f\in \fbl[E_n]$.

For every natural $n$, let $R_n:=\widetilde{Q_n}\overline{P_n}$, which is a projection on $\fbl[\mathcal{J}^{**}]$ onto $\overline{\iota_n}\bigl(\delta_{E_n}(E_n)\bigr)=\delta_{\mathcal{J}^{**}}(E_n)\subset \delta_{\mathcal{J}^{**}}(\mathcal{J}^{**})$. Let $\mathcal{U}$ be a free ultrafilter on $\mathbb N$ and define
$$Rf:=\delta_{\mathcal{J}^{**}}\left(w^*-\lim_{\mathcal{U}} \delta_{\mathcal{J}^{**}}^{-1} (R_n\,f)\right),\qquad f\in \fbl[\mathcal{J}^{**}].$$

Observe that $R$ is a projection (with $\|R\|\leq C$) from $\fbl[\mathcal{J}^{**}]$
onto $\delta_{\mathcal{J}^{**}}(\mathcal{J}^{**})$, which is a contradiction with the property ($3$) mentioned above. Therefore, by Proposition \ref{prop:FBL-complemented}, this argument shows that for every $C\geq 1$ there exists $n\in\mathbb{N}$ such that $E_n$ cannot be $C$-complemented in any Banach lattice.
\end{proof}

%\subsection{Almost contractively complemented spaces}
This fact is also relevant when comparing to the Banach space situation: Given a Banach space $E$ and a complemented subspace $F$, there is an equivalent norm in $E$ for which $F$ is contractively complemented.

We say that a Banach space $E$ is \textit{almost contractively complemented in a Banach lattice} if for every $\varepsilon>0$, there is a Banach lattice $X_\varepsilon$ such that $E$ is $(1+\varepsilon)$-isomorphic to a $(1+\varepsilon)$-complemented subspace of $X_\varepsilon$.

\begin{prop}
Suppose that $E$ is almost contractively complemented in a Banach
lattice and there is a contractive projection $Q:E^{**}\to E$. Then E is contractively complemented in a Banach lattice. 
\end{prop}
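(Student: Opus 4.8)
The plan is to run an ultraproduct/limit argument on the witnessing Banach lattices $X_\varepsilon$, using the hypothesis that $E$ is norm-one complemented in its bidual to collapse the loss of constants. First I would fix, for each $n\in\mathbb{N}$, a Banach lattice $X_n$, an isomorphism $T_n\colon E\to Y_n\subset X_n$ with $\|T_n\|\|T_n^{-1}\|\leq 1+\tfrac1n$ (which after rescaling one may take so that $\|T_n\|\leq 1$ and $\|T_n^{-1}\|\leq 1+\tfrac1n$), and a projection $P_n\colon X_n\to Y_n$ with $\|P_n\|\leq 1+\tfrac1n$. Let $\mathcal{U}$ be a free ultrafilter on $\mathbb{N}$ and form the Banach lattice ultraproduct $X_{\mathcal{U}}=\prod_{\mathcal{U}} X_n$, which is again a Banach lattice. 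The maps $T_n$ induce an isometric embedding $T_{\mathcal{U}}\colon E\to X_{\mathcal{U}}$, $x\mapsto (T_n x)_{\mathcal{U}}$ (isometric because $\|T_n x\|\to\|x\|$ along $\mathcal{U}$), and the projections $P_n$ induce a norm-one projection $P_{\mathcal{U}}\colon X_{\mathcal{U}}\to X_{\mathcal{U}}$ with range containing $T_{\mathcal{U}}(E)$. The difficulty is that $P_{\mathcal{U}}$ need not land \emph{exactly} on $T_{\mathcal{U}}(E)$: its range is a subspace $W\supset T_{\mathcal{U}}(E)$ that can be identified, via the usual ``local reflexivity within ultraproducts'' principle, with (an isometric copy of) some space sitting between $E$ and $E^{**}$ — in fact $W$ is isometric to a $1$-complemented subspace of $E^{**}$ containing $E$ $1$-complementably.

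Next I would use the hypothesis on $Q$. Composing $P_{\mathcal{U}}$ with the norm-one projection of $W$ onto $T_{\mathcal{U}}(E)$ coming from $Q\colon E^{**}\to E$ yields a norm-one projection $R=(\text{that projection})\circ P_{\mathcal{U}}$ of $X_{\mathcal{U}}$ onto $T_{\mathcal{U}}(E)$. Concretely: the range $W=P_{\mathcal{U}}(X_{\mathcal{U}})$ admits a canonical norm-one map $\pi\colon W\to E^{**}$ (assemble weak$^*$-limits of the coordinates of a representative, as in the displayed formula for $R$ in the previous proposition), which is a left inverse to the inclusion $E\hookrightarrow W$ up to the identification $T_{\mathcal{U}}$; then $T_{\mathcal{U}}\circ Q\circ \pi\circ P_{\mathcal{U}}$ is an idempotent of norm $\leq \|T_{\mathcal{U}}\|\|Q\|\|\pi\|\|P_{\mathcal{U}}\|\leq 1$ with range exactly $T_{\mathcal{U}}(E)$. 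Since $X_{\mathcal{U}}$ is a Banach lattice and $T_{\mathcal{U}}$ is an isometric embedding, this exhibits $E$ as a norm-one complemented subspace of the Banach lattice $X_{\mathcal{U}}$, which is what we want. (Alternatively, and perhaps more cleanly, one can phrase the whole argument through $\fbl[E]$ using Proposition \ref{prop:FBL-complemented}: the hypothesis gives projections $Q_n\colon\fbl[E]\to\fbl[E]$ onto $\delta_E(E)$ with $\|Q_n\|\leq 1+\tfrac1n$, and one takes a pointwise/weak$^*$ limit along $\mathcal{U}$ of the $Q_n f$ followed by re-applying $\delta_E$, exactly as in the James space argument above, to get a norm-one projection — except here the limit of idempotents of norm $\to 1$ is automatically idempotent onto $\delta_E(E)$ without needing $Q$; so in fact the statement about $Q$ is only needed to handle the range when one works inside a generic $X_n$ rather than inside $\fbl[E]$.)

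The main obstacle I anticipate is the identification of the range $W$ of the ultraproduct projection with something controlled by $E^{**}$ — i.e. making precise that ``the ultrafilter limit of near-isomorphic copies of $E$, inside a larger space, can only have grown to a $1$-complemented superspace sitting inside $E^{**}$.'' This is where one invokes the principle of local reflexivity (or its ultraproduct reformulation: $E^{**}$ embeds isometrically and $1$-complementedly into any ultraproduct $\prod_{\mathcal{U}} E$, hence into $X_{\mathcal{U}}$ through the $T_n$'s), together with a check that the weak$^*$-limit map $\pi\colon W\to E^{**}$ is well defined (independent of the representative, using $w^*$-compactness of balls) and has norm one. Once $\pi$ is in hand, the rest is a short diagram chase. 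A secondary, purely bookkeeping point is the initial rescaling normalising $\|T_n\|\leq 1$; this is harmless since multiplying $T_n$ by a scalar does not affect any of the constants in an essential way. If one prefers to avoid local reflexivity entirely, the cleanest route is indeed the $\fbl[E]$ formulation: apply Proposition \ref{prop:FBL-complemented} to get $\|Q_n\|\to 1$ projections on the single fixed Banach lattice $\fbl[E]$, set $R f=\delta_E\!\left(w^*\text{-}\lim_{\mathcal{U}}\delta_E^{-1}(Q_n f)\right)$ for $f\in\fbl[E]$ — noting $\delta_E^{-1}(Q_n f)\in E\subset E^{**}$ lies in a fixed ball so the $w^*$-limit exists — and verify $R$ is a norm-one idempotent onto $\delta_E(E)$; here the hypothesis on $Q$ is not even used, which suggests the cleanest statement may be slightly stronger than advertised, but the version with $Q$ is what the surrounding text calls for and is proved as above.
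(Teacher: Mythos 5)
Your main ultraproduct argument is sound and, in substance, parallels the paper's proof: where you form the ultraproduct $X_{\mathcal{U}}$ of the witnessing lattices and use the canonical norm-one map $\pi$ from the range of $P_{\mathcal{U}}$ into $E^{**}$ (weak$^*$-limits of coordinates) followed by $Q$, the paper stays inside the single lattice $\fbl[E]^{**}$: it takes the almost contractive projections $P_n$ onto $\delta_E(E)$ supplied by Proposition \ref{prop:FBL-complemented}, passes to $P_n^{**}$, defines $Pf=w^*-\lim_{\mathcal{U}}P_n^{**}f$ to get a contractive projection onto $\delta_E^{**}(E^{**})$, and then composes with $Q$. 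Both arguments rest on the same two points: weak$^*$-compactness of balls of a bidual to produce the limit projection, and the hypothesis on $Q$ to come back from $E^{**}$ to $E$; your version trades the free Banach lattice for ultraproduct machinery. One inaccuracy in your write-up: the range $W\cong\prod_{\mathcal{U}}Y_n$ is isometric to an ultrapower of $E$, which is in general far larger than $E^{**}$, so it is not ``isometric to a $1$-complemented subspace of $E^{**}$''; fortunately this claim is never used --- all your construction needs is the norm-one map $\pi\colon W\to E^{**}$ restricting to the canonical embedding on the diagonal copy of $E$, which you do build correctly.

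Your parenthetical claim that in the $\fbl[E]$ formulation ``the hypothesis on $Q$ is not even used'' is, however, a genuine error. The formula $Rf=\delta_E\bigl(w^*-\lim_{\mathcal{U}}\delta_E^{-1}(Q_nf)\bigr)$ is ill-posed: the weak$^*$-limit of the bounded sequence $\delta_E^{-1}(Q_nf)\in E$ exists in $E^{**}$, but there is no reason for it to lie in $E$ (by Goldstine, $B_E$ is weak$^*$-dense in $B_{E^{**}}$, and it is weak$^*$-closed only when $E$ is reflexive), so $\delta_E$ cannot be applied to it. This is exactly where $Q$ enters: one must set $Rf=\delta_EQ\bigl(w^*-\lim_{\mathcal{U}}\delta_E^{-1}(Q_nf)\bigr)$, or, as the paper does, work in $\fbl[E]^{**}$, where the limit lands in $\delta_E^{**}(E^{**})$, and then apply $Q$. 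The James-space argument you are imitating goes through only because there the limits are taken inside $\mathcal{J}^{**}$, which is itself a dual space, so they stay in the space. Note also that the paper explicitly states it does not know whether the hypothesis on $Q$ is necessary; a two-line argument purporting to remove it should have been a warning sign.
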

\begin{proof}
 By Proposition \ref{prop:FBL-complemented}, for every $n\in \mathbb N$ there is a projection $P_n$ on $\fbl[E]$ with range $\delta(E)$ and $\|P_n\|\leq 1+\frac{1}{n}$. Thus, $P_n^{**}$ are projections on $\fbl[E]^{**}$ such that $\|P_n^{**}\|\leq 1+\frac{1}{n}$ and with ranges isometric to $E^{**}$ (namely, all projections have the same range $\delta_E^{**}(E^{**})$). Let $\mathcal{U}$ be a free ultrafilter on $\mathbb N$ and define
 $$
 Pf=w^*-\lim_{\mathcal{U}} P_n^{**} f, \qquad \text{ for every } f\in \fbl[E]^{**}.
 $$
It is not difficult to check that $P$ is a contractive projection whose range is $\delta_E^{**}(E^{**})$. This shows that $E^{**}$ is $1$-complemented in $\fbl[E]^{**}$ and since $E$ is $1$-complemented in $E^{**}$, then $E$ is $1$-complemented in $\fbl[E]^{**}$.
\end{proof}

We do not know whether the hypothesis that $E$ is contractively complemented in $E^{**}$ in the above proposition is actually necessary. This should be compared with the fact that if a Banach space $E$ is \textit{almost contractively complemented in a $C(K)$-space}, then it is actually isometric to a $C_\sigma(K)$-space \cite[Theorem 0.2]{AB88}.

%\subsection{Some specific projections} $L_p$-projections, $M$-projections? \textcolor{blue}{Esto lo comenté ya un poco en el artículo del contraejemplo del CSP y no quiero solaparme... Se puede motivar un poco más el interés de estas proyecciones y decir que hay artículos importantes estudiándolas (F. Cunningham Jr.). L-structure in L-spaces, M-structure in Banach spaces, M-structure in dual Banach spaces, Lp-structure in real Banach spaces. También el paper de Vidal Agniel sobre Lp-proyecciones.}

\section{Complementation in Banach lattices with extra properties}\label{sec:extra properties}

If a Banach space which is complemented in a Banach lattice has a certain property, it is sometimes possible to construct a Banach lattice with this extra property in which it also embedds as a complemented subspace. For instance, if a separable Banach space $E$ is complemented in a Banach lattice, then it is complemented in a separable Banach lattice; specifically, in its free Banach lattice $\fbl[E]$ (this follows from the fact that the sublattice generated by a subset of a Banach lattice preserves the density character). In a similar direction, recall  \cite[Proposition 1.c.6]{LT2-book} (see also \cite[Proposition 2.6 (i)]{FJT75}):

\begin{prop}\label{prop:not-containing-c0}
    Let $E$ be a Banach space which does not contain isomorphic copies of $c_0$ (respectively, $E$ does not contain $(\ell_\infty^n)_{n=1}^\infty$ uniformly) and is a complemented subspace of a Banach lattice $X$. Then, there is a Banach lattice $Y$ which does not contain isomorphic copies of $c_0$ (resp., $(\ell_\infty^n)_{n=1}^\infty$ uniformly) and contains $E$ as a complemented subspace.
\end{prop}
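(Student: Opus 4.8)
The plan is to use the free Banach lattice $\fbl[E]$ as the canonical candidate for $Y$ and show it inherits the relevant property from $E$, combining this with Proposition \ref{prop:FBL-complemented}. So the first step is to recall that, by Proposition \ref{prop:FBL-complemented} (applied with $C_1=1$), $\delta_E(E)$ is complemented in $\fbl[E]$. Hence it suffices to prove: if $E$ does not contain isomorphic copies of $c_0$ (respectively, does not contain $(\ell_\infty^n)_{n=1}^\infty$ uniformly), then neither does $\fbl[E]$. Setting $Y=\fbl[E]$ would then finish the argument.

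The heart of the matter is thus the implication at the level of free Banach lattices. For this I would invoke the description, due to Aviles--Rodriguez--Tradacete, of when a Banach lattice (in particular $\fbl[E]$) contains $c_0$: by \cite[Section 1.c]{LT2-book}, a Banach lattice contains an isomorphic copy of $c_0$ if and only if it contains $(\ell_\infty^n)_{n=1}^\infty$ uniformly, which in turn (again by the lattice structure) is equivalent to failing to be weakly sequentially complete, or to containing a positive disjoint sequence equivalent to the unit vector basis of $c_0$. So the two cases of the statement collapse to essentially one, and I would argue the contrapositive: if $\fbl[E]$ contains $(\ell_\infty^n)_{n=1}^\infty$ uniformly, one extracts a disjoint sequence $(f_k)$ in $\fbl[E]^+$ spanning a copy of $c_0$ uniformly, and then one must push this down to $E$. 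Here I would use the functional representation $\fbl[E]\subset H_1[E]$ together with the fact that $\fbl[E]$ is the closed sublattice generated by $\delta_E(E)$: a disjoint sequence can be approximated by lattice-polynomial expressions in the $\delta_x$'s, and a gliding-hump / perturbation argument should let us replace $(f_k)$ by $(\delta_{x_k})$ (or by blocks of such) for a suitable sequence $(x_k)\subset E$, which would then span a copy of $c_0$ in $E$ since $\delta_E$ is an isometry — contradiction. For the quantitative ``uniform $(\ell_\infty^n)$'' version one tracks constants through the same reduction.

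The main obstacle I expect is exactly this last reduction: controlling how an arbitrary disjoint (or ``almost disjoint'') sequence in the sublattice generated by $\delta_E(E)$ relates back to elements of $E$ itself, since lattice operations on the $\delta_x$ need not stay close to any single $\delta_y$. The cleanest route is probably \emph{not} to do this by hand but to cite the relevant structural result: $\fbl[E]$ is weakly sequentially complete whenever $E$ is (equivalently, $\fbl[E]$ contains no copy of $c_0$ iff $E$ contains none), which is established in the literature on free Banach lattices — this is precisely the content needed, and it is the natural companion, on the $\fbl[E]$ side, of the Lindenstrauss--Tzafriri dichotomy for Banach lattices quoted in the introduction. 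With that black box, the proof is immediate: take $Y=\fbl[E]$, which contains no copy of $c_0$ (resp. no uniform $(\ell_\infty^n)$) by the cited result, and contains $\delta_E(E)\cong E$ complemented by Proposition \ref{prop:FBL-complemented}.
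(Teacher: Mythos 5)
Your proposal rests on a claim that is false, and the paper itself explicitly refutes it: you cannot take $Y=\fbl[E]$, because $\fbl[E]$ \emph{always} contains an isomorphic copy of $c_0$ as soon as $\dim E\geq 2$. Indeed, any $2$-dimensional subspace $F\subset E$ is complemented in $E$, so $\fbl[F]$ embeds as a complemented sublattice of $\fbl[E]$, and $\fbl[F]$ is $2$-lattice isomorphic to $C(S_{F^*})\approx C[0,1]$ (see \cite[Remark 3.1 (i)]{O24}); hence $c_0\hookrightarrow\fbl[E]$, and a fortiori $\fbl[E]$ contains $(\ell_\infty^n)_{n=1}^\infty$ uniformly. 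So the ``black box'' you hope to cite — that $\fbl[E]$ contains no copy of $c_0$ (or is weakly sequentially complete) whenever $E$ contains none — does not exist; it fails already for $E=\mathbb{R}^2$. This is precisely why the paper remarks, right after these propositions, that Propositions \ref{prop:not-containing-c0} and \ref{prop:complemented-reflexive} cannot be deduced from free Banach lattices in this way. The gliding-hump reduction you sketch as a fallback would have to contradict this concrete counterexample, so it cannot be repaired.

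A secondary error: you assert that for a Banach lattice, containing $c_0$ is equivalent to containing $(\ell_\infty^n)_{n=1}^\infty$ uniformly. This is false — the reflexive Banach lattice $\bigl(\sum_n\oplus\,\ell_\infty^n\bigr)_{\ell_2}$ contains the $\ell_\infty^n$ uniformly (even isometrically, as sublattices) but no copy of $c_0$. The two cases of the statement are genuinely different local/infinite-dimensional conditions and cannot be collapsed. For the record, the paper does not prove this proposition at all: it is quoted from \cite[Proposition 1.c.6]{LT2-book} (see also \cite[Proposition 2.6 (i)]{FJT75}), where the lattice $Y$ is produced by a different construction from the given lattice $X$ and the projection onto $E$, not from $\fbl[E]$.
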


The following is also well-known. We include a proof for the convenience of the reader.

\begin{prop}\label{prop:complemented-reflexive}
    If $E$ is a reflexive Banach space which is complemented in a Banach lattice $X$, then $E$ is a complemented subspace of some reflexive Banach lattice.
\end{prop}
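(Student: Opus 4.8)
The plan is to combine Proposition~\ref{prop:not-containing-c0} with the Davis--Figiel--Johnson--Pe{\l{}}czy{\'{n}}ski (DFJP) interpolation method. The point is that when DFJP interpolation is carried out with respect to a bounded, convex \emph{and solid} weakly compact set, it produces a reflexive Banach \emph{lattice}, not merely a reflexive Banach space; and Proposition~\ref{prop:not-containing-c0} is precisely what allows us to first move to an ambient Banach lattice with order continuous norm, inside which the unit ball of $E$ can be enlarged to such a set.

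First I would note that, $E$ being reflexive, every closed subspace of $E$ is reflexive, so $E$ contains no isomorphic copy of $c_0$ nor of $\ell_1$. Applying Proposition~\ref{prop:not-containing-c0} to the complemented subspace $E$ of $X$ produces a Banach lattice $Y$ not containing $c_0$, together with a bounded projection $Q\colon Y\to Y$ whose range is an isomorphic copy of $E$, which I identify with $E$. Since $Y$ contains no copy of $c_0$ it is a KB-space, and hence has order continuous norm. As $E$ is reflexive, its closed unit ball $B_E$ is weakly compact in $Y$; because $Y$ has order continuous norm, the smallest closed, convex and solid subset $W$ of $Y$ containing $B_E$ is again weakly compact (the solid hull of a relatively weakly compact set in an order continuous Banach lattice is relatively weakly compact, and passing to the closed convex hull preserves this and solidity). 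Thus $W$ is bounded, convex, balanced and solid, and $B_E\subseteq W$.

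Next I would run the interpolation with the gauge of $W$. For $n\geq 1$ let $\|\cdot\|_n$ be the Minkowski functional of $U_n:=2^nW+2^{-n}B_Y$; since $W$ and $B_Y$ are convex and solid, so is each $U_n$, and hence $\|\cdot\|_n$ is an equivalent \emph{lattice} norm on $Y$. Put
\[
\Delta:=\Bigl\{\,y\in Y:\;\|y\|_\Delta:=\bigl(\textstyle\sum_{n\geq 1}\|y\|_n^2\bigr)^{1/2}<\infty\,\Bigr\}.
\]
Then $\Delta$ is a solid linear subspace of $Y$ and, since $|y'|\le|y|$ forces $\|y'\|_n\le\|y\|_n$ for every $n$, the norm $\|\cdot\|_\Delta$ is a lattice norm, so $(\Delta,\|\cdot\|_\Delta)$ is a Banach lattice. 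As $w\in W$ implies $w\in 2^{-n}U_n$ and hence $\|w\|_n\leq 2^{-n}$, one gets $B_E\subseteq W\subseteq B_\Delta$, and since each $U_n$ is a bounded subset of $Y$ there is $c>0$ with $\|y\|_Y\leq c\|y\|_\Delta$ for all $y\in\Delta$. In particular $E\subseteq\Delta$, the inclusion $E\hookrightarrow\Delta$ is bounded, and $\|\cdot\|_\Delta$ is equivalent to the $Y$-norm on the closed subspace $E$. Consequently $Q$ maps $\Delta$ into $E\subseteq\Delta$ and is bounded as an operator on $\Delta$, so $Q|_\Delta$ is a bounded projection of $\Delta$ onto $E$. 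Finally, $W$ being weakly compact, the DFJP theorem yields that $\Delta$ is reflexive, and therefore $E$ is complemented in the reflexive Banach lattice $\Delta$.

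The step I expect to demand the most care is the use of order continuity of the norm, namely the fact that the solid hull of a relatively weakly compact set is relatively weakly compact in that setting; in a general Banach lattice this can fail, which is exactly why the preliminary reduction via Proposition~\ref{prop:not-containing-c0} is needed, and it is the conceptual heart of the argument. Verifying that the DFJP space $\Delta$ genuinely inherits the lattice structure once the gauge set is solid, and that the projection $Q$ survives the passage to the interpolation norm, is then routine.
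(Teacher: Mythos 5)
Your strategy is, at bottom, the same as the paper's: the paper also first uses Proposition \ref{prop:not-containing-c0} to replace the ambient lattice by one not containing $c_0$, observes that the projection onto $E$ is weakly compact, and then factors it through a reflexive Banach lattice by citing \cite[Corollary 2.3]{AB84}, finishing with the projection $Q=TPS$ exactly as you do with $Q|_\Delta$. What you do differently is to re-prove that factorization by hand, running the DFJP scheme on the closed convex solid hull of $B_E$; this is precisely the mechanism behind the Aliprantis--Burkinshaw result, and your bookkeeping (solidity of $2^nW+2^{-n}B_Y$ via Riesz decomposition, $\|\cdot\|_\Delta$ being a lattice norm, equivalence of the norms on $E$, boundedness of $Q|_\Delta$, reflexivity of $\Delta$ from weak compactness of $W$) is fine.

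There is, however, a genuine error in the step you yourself call the conceptual heart: it is \emph{not} true that in a Banach lattice with order continuous norm the solid hull of a relatively weakly compact set is relatively weakly compact. Take $Y=\bigl(\bigoplus_{k\in\mathbb{N}}L_1[0,1]\bigr)_{c_0}$, which is order continuous, and let $w_n\in Y$ have $k$-th component equal to the Rademacher function $r_n$ for $k\le n$ and $0$ for $k>n$. Testing against the dual $\bigl(\bigoplus_k L_\infty[0,1]\bigr)_{\ell_1}$ and using dominated convergence shows $w_n\to 0$ weakly, so $A=\{w_n\}\cup\{0\}$ is weakly compact; but $|w_n|$, whose first $n$ components are the constant function $\mathbf{1}$, is an increasing sequence in the solid hull of $A$ with no weakly convergent subsequence (any weak limit would dominate every $|w_n|$, which is impossible in the $c_0$-sum). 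The hypothesis that actually makes your lemma true is the one you have but do not use: your $Y$ contains no copy of $c_0$, i.e.\ it is a KB-space (weakly sequentially complete), and for such lattices the convex solid hull of a relatively weakly compact set is indeed relatively weakly compact --- this is due to Aliprantis and Burkinshaw and is exactly the engine of \cite{AB84}. So your argument becomes correct once ``order continuous'' is replaced by ``not containing $c_0$'' in that lemma (or once you simply cite \cite[Corollary 2.3]{AB84}, as the paper does); as written, the justification would collapse in a merely order continuous ambient lattice.
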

\begin{proof}
Let $P:X\to X$ be a projection onto $E$. It should be noted that $P$ is a weakly compact operator since its range $P(X)=E$ is a reflexive Banach space. As a consequence of \cite[Corollary 2.7]{AB84}, we know that $P^2=P$ factors through a reflexive Banach lattice $Y$. That is, there exist operators $T:X\to Y$ and $S:Y\to X$ such that $P=ST$. Now, define $Q:=TPS$, which is an operator on $Y$. Observe that $\left.T\right|_E$ is an isomorphism into its image and $Q$ is a projection on $Y$ (this can be deduced straightforwardly from the identity $P=ST$) with range $T(E)$. To check the last fact, note that for every $x\in E=P(X)$ we have
$$
Q(Tx)=TPS(Tx)=TP(ST)(x)=TPP(x)=Tx.
$$
\end{proof}

A \textit{dual version} of Proposition \ref{prop:not-containing-c0} was also established in \cite[Theorem 1.2]{FGJ}.
\begin{prop}\label{prop:complemented-dual}
    Let $E$ be a complemented subspace of a Banach lattice $X$ and assume that $c_0$ does not embed into $E^*$. Then $E$ is complemented in a Banach lattice $Y$ such that $c_0$ does not embed into $Y^*$.
\end{prop}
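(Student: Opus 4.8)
The plan is to imitate the proof of Proposition~\ref{prop:complemented-reflexive}: first factor the projection onto $E$ through a Banach lattice $Y$ that already has the required property, and then transport the projection to $Y$. Fix a bounded projection $P\colon X\to X$ with range $E$. Dualizing, $P^{*}$ is a bounded projection on the Banach lattice $X^{*}$ whose range is isomorphic to $E^{*}$; so $E^{*}$ is, up to isomorphism, a complemented subspace of a Banach lattice, and since it does not contain $c_{0}$, Proposition~\ref{prop:not-containing-c0} even provides a Banach lattice $W$ with no copy of $c_{0}$ (a KB-space) in which $E^{*}$ sits complemented. The difficulty is that this is information about $E^{*}$, whereas we want a statement about $E$ together with a lattice whose \emph{dual} omits $c_{0}$; closing this gap is the job of the factorization below.

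The core of the proof is thus the following claim, which plays here the role that the factorization of weakly compact operators through reflexive Banach lattices plays in Proposition~\ref{prop:complemented-reflexive}: \emph{the projection $P$ admits a factorization $P=S\,T$ with bounded operators $T\colon X\to Y$ and $S\colon Y\to X$, where $Y$ is a Banach lattice such that $c_{0}$ does not embed into $Y^{*}$} (i.e.\ $Y^{*}$ is a KB-space). I would try to produce such a $Y$ by running a Davis--Figiel--Johnson--Pe\l{}czy\'{n}ski-type interpolation on $P$ inside the category of Banach lattices---in the spirit of the Aliprantis--Burkinshaw factorization theorems---arranged so as to be compatible with the passage to adjoints, the feature that should make it work being precisely that $\text{range}(P^{*})\cong E^{*}$ omits $c_{0}$, in place of the reflexivity of $\text{range}(P)$ that drives the classical construction. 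This is the step I expect to be the genuine obstacle: in the weakly compact case the interpolated space is automatically reflexive, hence automatically has weakly sequentially complete dual, whereas here one knows only that $\text{range}(P^{*})$ omits $c_{0}$ and must perform the interpolation carefully enough that $Y$ is a genuine Banach lattice \emph{and} the absence of $c_{0}$ is inherited by $Y^{*}$ rather than by $Y$ itself---that is, so that the construction commutes suitably with taking adjoints. We refer to \cite[Theorem~1.2]{FGJ} for a complete proof.

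Granting the factorization, the rest is routine. Put $Q:=T\,P\,S\colon Y\to Y$. From $ST=P$ and $P^{2}=P$ one gets $Q^{2}=TP(ST)PS=TP^{3}S=TPS=Q$ and $Q(Y)\subseteq TP(X)=T(E)$; moreover, for $x\in E$ one has $Px=x$, hence $STx=x$, and therefore $Q(Tx)=TP(STx)=TPx=Tx$. Thus $Q$ is a bounded projection of $Y$ with range exactly $T(E)$. Finally $S(Tx)=Px=x$ for every $x\in E$, so the restriction of $S$ to $T(E)$ is a bounded inverse of the restriction of $T$ to $E$, and hence $T$ maps $E$ isomorphically onto $T(E)$. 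Therefore $E$ is isomorphic to the complemented subspace $T(E)$ of the Banach lattice $Y$, and $c_{0}$ does not embed into $Y^{*}$, as required.
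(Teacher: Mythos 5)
The paper gives no proof of this proposition at all: it is stated as a known result attributed to \cite[Theorem 1.2]{FGJ}, and your argument ultimately rests on exactly that same citation (your transport of the projection through the factorization $P=ST$ is correct but routine, while the key step --- producing the lattice $Y$ with $c_0$ not embedding into $Y^*$ --- is precisely what you defer to \cite{FGJ}), so your treatment coincides with the paper's. Note, however, that the paper later records a self-contained route which your sketched DFJP-type interpolation leaves open: one can simply take $Y=\fbl[E]$, since $E$ is complemented in $\fbl[E]$ by Proposition \ref{prop:FBL-complemented}, and if $c_0$ embedded into $\fbl[E]^*$ then $\ell_1$ would be complemented in $\fbl[E]$ by Bessaga--Pe\l{}czy\'{n}ski, hence complemented in $E$ by \cite[Theorem 9.20]{OTTT24}, contradicting that $c_0$ does not embed into $E^*$.
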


Recall that by Proposition \ref{prop:FBL-complemented} we know that if a Banach space $E$ is a complemented subspace of some Banach lattice $X$, then $E$ must be complemented in $\fbl[E]$. So it is natural to wonder whether we can take $Y=\fbl[E]$ in the three previous propositions. Let us analyze this:
\begin{itemize}
    \item $\fbl[E]$ contains an isomorphic of $c_0$ whenever $E$ is a Banach space such that $\text{dim}\,E\geq 2$. Indeed, let $E$ be a Banach space of dimension $\geq 2$ and let $F$ be a $2$-dimensional subspace of $E$. Then $\fbl[F]$ is a complemented sublattice of $\fbl[E]$ and, moreover, $\fbl[F]$ is $2$-lattice isomorphic to $C(S_{F^*})\approx C[0,1]$ (see \cite[Remark 3.1 (i)]{O24}). Thus, $c_0$ embeds isomorphically into $\fbl[F]$, and hence into $\fbl[E]$. Therefore, we cannot deduce Propositions \ref{prop:not-containing-c0} and \ref{prop:complemented-reflexive} using free Banach lattices (at least, not in a trivial way).
    
    \item However, in \cite[Theorem 9.20]{OTTT24} it is shown that $\ell_1$ is a complemented subspace of $E$ if and only if $\ell_1$ is a complemented subspace of $\fbl[E]$. Moreover, recall that by Bessaga-Pe\l{}czy\'{n}ski's theorem \cite[Proposition 2.e.8]{LT1-book}, for any Banach space $F$, $\ell_1$ embeds complementably into $F$ if and only if $c_0$ embeds isomorphically into $F^*$. Consequently, we can take $Y=\fbl[E]$ in Proposition \ref{prop:complemented-dual}.
\end{itemize}
As a result of Proposition \ref{prop:FBL-complemented} and \cite[Corollary 9.25 and Lemma 9.26]{OTTT24}, we can also state a \textit{local version} of the previous proposition:

\begin{prop}
    Let $E$ be a complemented subspace of a Banach lattice $X$ and assume that $E^*$ does not contain $(\ell_\infty^n)_{n=1}^\infty$ uniformly. Then $E$ is complemented in a Banach lattice $Y$ such that $Y^*$ does not contain uniformly subspaces isomorphic to $\ell_\infty^n$. Namely, we can take $Y=\fbl[E]$. 
\end{prop}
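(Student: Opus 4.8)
The plan is to deduce this statement directly from Proposition~\ref{prop:FBL-complemented} together with the cited results \cite[Corollary 9.25 and Lemma 9.26]{OTTT24}. First I would recall the setup: $E$ is a complemented subspace of some Banach lattice $X$ and $E^*$ does not contain $(\ell_\infty^n)_{n=1}^\infty$ uniformly. By Proposition~\ref{prop:FBL-complemented}, $\delta_E(E)$ is complemented in $\fbl[E]$ (with projection constant at most that of $E$ in $X$), so $E$ is linearly isomorphic to a complemented subspace of $\fbl[E]$. Thus it suffices to check that $\fbl[E]^*$ does not contain $(\ell_\infty^n)_{n=1}^\infty$ uniformly.

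The heart of the argument is the observation that the local property ``$Y^*$ does not contain $(\ell_\infty^n)$ uniformly'' passes from $E$ to $\fbl[E]$. This is exactly what \cite[Corollary 9.25 and Lemma 9.26]{OTTT24} provide: Lemma~9.26 presumably identifies the relevant finite-dimensional structure of $\fbl[E]$ in terms of that of $E$, and Corollary~9.25 translates the absence of uniform copies of $\ell_\infty^n$ in $E^*$ into the same statement for $\fbl[E]^*$. So the second step is simply to invoke these results to conclude $\fbl[E]^*$ has the desired property, and hence $Y=\fbl[E]$ works. One should also note that the hypothesis that $E$ is complemented in a Banach lattice is only used to guarantee, via Proposition~\ref{prop:FBL-complemented}, that $E$ is complemented in $\fbl[E]$; the local property itself is an intrinsic feature of $E^*$ that is inherited by $\fbl[E]^*$ unconditionally.

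I do not anticipate a genuine obstacle here, since the substantive content is packaged in the cited lemmas; the only care needed is bookkeeping. Concretely, the proof reads:

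\begin{proof}
By Proposition~\ref{prop:FBL-complemented}, $\delta_E(E)$ is a complemented subspace of $\fbl[E]$, so $E$ is isomorphic to a complemented subspace of $\fbl[E]$. It therefore suffices to show that $\fbl[E]^*$ does not contain $(\ell_\infty^n)_{n=1}^\infty$ uniformly, and this is precisely the content of \cite[Corollary 9.25 and Lemma 9.26]{OTTT24}, applied to the hypothesis that $E^*$ does not contain $(\ell_\infty^n)_{n=1}^\infty$ uniformly. Hence $Y=\fbl[E]$ has the required properties.
\end{proof}

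If one wanted a self-contained account rather than a citation, the main work would be to show that a uniformly bounded sequence of embeddings $\ell_\infty^n \hookrightarrow \fbl[E]^*$ forces, by a weak$^*$-compactness and finite-representability argument, a corresponding uniform family inside $E^*$ — but since \cite{OTTT24} already carries this out, that is the step I would cite rather than reprove.
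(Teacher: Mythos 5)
Your proposal is correct and follows essentially the same route as the paper, which states this proposition without a separate proof precisely as a consequence of Proposition~\ref{prop:FBL-complemented} (giving complementation of $\delta_E(E)$ in $\fbl[E]$) combined with \cite[Corollary 9.25 and Lemma 9.26]{OTTT24} (transferring the local property of $E^*$ to $\fbl[E]^*$). Your remark that the complementation hypothesis is used only to place $E$ complementably inside $\fbl[E]$, while the $\ell_\infty^n$ condition passes intrinsically from $E^*$ to $\fbl[E]^*$, is exactly the intended reading.
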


%{\bf\color{red} Deberías incluir al menos las definiciones y propiedades necesarias de espacios script L1 y Linfty}

We now turn to analyze the case when $E$ is an $\mathcal{L}_\infty$-space. When $E$ is an $\mathcal{L}_1$-space, we have the following result:

\begin{prop}
    If $E$ is an $\mathcal{L}_1$-space which is complemented in a Banach lattice, then it is complemented in some $L_1(\mu)$-space.
\end{prop}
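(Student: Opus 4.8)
The plan is to establish the statement from two facts. First: the bidual $E^{**}$ of any $\mathcal{L}_1$-space is isomorphic to a complemented subspace of an $L_1(\mu)$-space. Second: under the standing hypothesis that $E$ is complemented in a Banach lattice, $E$ is in fact complemented in $E^{**}$. Composing the two associated projections then exhibits $E$ as a complemented subspace of an $L_1(\mu)$-space.

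For the first fact I would argue as follows. Since $E$ is an $\mathcal{L}_1$-space, its dual $E^*$ is an $\mathcal{L}_\infty$-space \cite{LR69}. As $E^*$ is moreover a dual space, it is $1$-complemented in its bidual $E^{***}$; and $E^{***}$, being the bidual of an $\mathcal{L}_\infty$-space, is injective \cite{LR69}. Hence $E^*$ is a complemented subspace of an injective space, so it is injective, and therefore complemented in some $\ell_\infty(\Gamma)$. Dualizing, $E^{**}$ is complemented in $\ell_\infty(\Gamma)^*$, which is an AL-space and hence lattice isometric to an $L_1(\mu)$-space by Kakutani's representation theorem \cite[Theorem 1.b.2]{LT2-book}.

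For the second fact, recall that $\mathcal{L}_1$-spaces have cotype $2$ and hence contain no isomorphic copy of $c_0$. By Proposition \ref{prop:not-containing-c0} we may therefore assume that $E$ is complemented in a Banach lattice $X$ which also contains no copy of $c_0$; such a Banach lattice is weakly sequentially complete, i.e. a KB-space, and consequently a projection band in its bidual \cite[Section 1.c]{LT2-book}, so there is a norm-one projection $\rho\colon X^{**}\to X$ restricting to the identity on $X$. Let $\pi\colon X\to X$ be a bounded projection onto the fixed copy of $E$ inside $X$. Then $\pi\rho\colon X^{**}\to X$ is a projection whose range is again that copy of $E$, and restricting it to $E^{**}$, regarded as a closed subspace of $X^{**}$ through the bidual of the inclusion $E\hookrightarrow X$, produces a bounded projection of $E^{**}$ onto $E$. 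Thus $E$ is complemented in $E^{**}$, and combining this with the previous paragraph completes the argument.

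I expect the routine part to be the projection-chasing of the third paragraph, which is of the same flavour as the proof of Proposition \ref{prop:complemented-reflexive}. The substantive input is the classical fact of Lindenstrauss and Rosenthal \cite{LR69} that the dual of an $\mathcal{L}_1$-space is an injective Banach space (equivalently, that the bidual of an $\mathcal{L}_\infty$-space is injective), on which the whole reduction to $L_1(\mu)$ hinges; one should make sure this is invoked in the generality (in particular, without separability assumptions) needed here.
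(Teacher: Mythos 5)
Your argument is correct, and its core coincides with the paper's proof: both show that $E$ contains no isomorphic copy of $c_0$ (you via cotype $2$, the paper via embedding $E$ into an $L_1(\mu)$-space by \cite[Proposition 7.1]{LP68} and using weak sequential completeness -- both legitimate), then apply Proposition \ref{prop:not-containing-c0} to place $E$ complementably in a Banach lattice $X$ without copies of $c_0$, observe that such an $X$ is a projection band in $X^{**}$ \cite[Theorem 1.c.4]{LT2-book}, and conclude by the projection-chasing you describe that $E$ is complemented in $E^{**}$. The divergence is only in the final step: the paper at this point simply cites \cite[Corollary 1 of Theorem 7.1]{LP68}, which says that an $\mathcal{L}_1$-space complemented in its bidual is complemented in an $L_1(\mu)$-space, whereas you re-derive that fact: $E^*$ is an $\mathcal{L}_\infty$-space and a dual, hence complemented in $E^{***}$, which is injective because it is the bidual of an $\mathcal{L}_\infty$-space (Lindenstrauss--Rosenthal), so $E^*$ is injective and thus complemented in some $\ell_\infty(\Gamma)$; dualizing and invoking Kakutani's representation of AL-spaces exhibits $E^{**}$ as a complemented subspace of an $L_1(\mu)$-space, and composing with the projection $E^{**}\to E$ finishes. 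In effect you have reproved the cited corollary rather than quoted it, which makes your write-up longer but self-contained modulo the Lindenstrauss--Rosenthal injectivity theorem; the paper's version is shorter but leans on the same classical machinery through the citation. Both the duality step (a projection on $\ell_\infty(\Gamma)$ with range $E^*$ dualizes to a projection on the AL-space $\ell_\infty(\Gamma)^*$ with range isomorphic to $E^{**}$) and the restriction of $\pi\rho$ to $E^{**}$, using $j^{**}\iota_E=\iota_X j$ for the inclusion $j:E\hookrightarrow X$, are sound as you indicate.
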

\begin{proof}
As $E$ is an $\mathcal{L}_1$-space, then thanks to \cite[Proposition 7.1]{LP68} we know that it is isomorphic to a subspace of an $L_1(\mu)$-space. Since every $L_1(\mu)$-space is weakly sequentially complete and this property passes to subspaces, $E$ cannot contain isomorphic copies of $c_0$. By \cite[Proposition 1.c.6]{LT2-book} (see \cite[Proposition 2.6]{FJT75}) $E$ is a complemented subspace of a certain Banach lattice $X$ which does not contain isomorphic copies of $c_0$. By \cite[Theorem 1.c.4]{LT2-book}, the canonical image of $X$ in $X^{**}$ is a projection band of $X^{**}$. Thus, by Remark \ref{rem:complemented-duals}, $E$ is complemented in its bidual, and by \cite[Corollary 1 of Theorem 7.1]{LP68} we conclude that $E$ is a complemented subspace of an $L_1(\mu)$-space.
\end{proof}

\begin{rem}
The above proof actually shows that if $E$ is an $\mathcal{L}_1$-space the following assertions are equivalent:
\begin{enumerate}
    \item[(i)] $E$ is complemented in a Banach lattice;
    \item[(ii)] $E$ is complemented in its bidual;
    \item[(iii)] $E$ is complemented in an $L_1$-space.
\end{enumerate}
Not every $\mathcal{L}_1$-space satisfies the above conditions (see the examples $D_k$ constructed in \cite[p. 211]{LT-notes}).
\end{rem}

Can we establish an analogous version of the preceding result for $\mathcal{L}_\infty$-spaces? Recall that an $\mathcal{L}_\infty$-space which is isomorphic to a Banach lattice must be isomorphic to an AM-space \cite[Corollary 2.2]{dHMST23}, so it is natural to pose the following:

\begin{question}\label{question:complemented-script-l-infinity}
 If $E$ is an $\mathcal{L}_\infty$-space which is complemented in a Banach lattice, is it then complemented in some AM-space?    
\end{question}

Note that in the separable setting, \textit{AM-space} can be replaced by \textit{C(K)-space} \cite{B73} (while in the non-separable case this cannot be done, see \cite{B77}). One of the motivations behind this question is the construction due to Benyamini and Lindenstrauss of an isometric predual of $\ell_1$ which cannot be complemented in any $C(K)$-space \cite{BL}. Since this space is separable, it cannot be isomorphic to a Banach lattice (given that this is equivalent to being isomorphic to a $C(K)$-space for separable spaces). But could this space be complemented in a Banach lattice? If Question \ref{question:complemented-script-l-infinity} had an affirmative answer, then the answer to the latter would be negative. 

\begin{question}[Separable CSP]
Let $E$ be a \textit{separable} Banach space which is complemented in a Banach lattice $X$. Is $E$ isomorphic to a Banach lattice?    
\end{question}

But a priori, Benyamini-Lindenstrauss' example could provide a negative answer to the CSP for separable Banach lattices. The separable version of CSP is closely related to understanding the complemented subspaces of $L_1[0,1]$ and $C[0,1]$, as noted in \cite[Remark 2.5]{dHMST23}.

By \cite[Theorem 9.21]{OTTT24}, if $E$ is an $\mathcal{L}_\infty$-space, then $\fbl[E]$ satisfies \textit{an upper $2$-estimate}. In fact, we will see next that in this case $\fbl[E]$ is actually 2-convex.

\begin{lem}
    Let $X$ be a $2$-convex Banach lattice. Then, $\fbl[X]$ is $2$-convex with $M^{(2)}(\fbl[X])\leq K_G M^{(2)}(X)$.    
\end{lem}
\begin{proof}
By \cite[Proposition 9.38]{OTTT24}, the following two assertions are equivalent for any $C\geq 1$:
\begin{enumerate}
    \item $\fbl^{(2)}[X]$ is lattice $C$-isomorphic to $\fbl[X]$.
    \item Every contraction $T:X\to L_1(\mu)$ is $2$-convex with constant $C$.
\end{enumerate}
Let us check ($2$). Let $T:X\to L_1(\mu)$ be an operator such that $\|T\|\leq 1$ and let $(x_k)_{k=1}^n$ be an arbitrary finite sequence in $X$. Then, by \cite[Theorem 1.f.14]{LT2-book}, we have
$$
\left\|\left(\sum_{k=1}^n|Tx_k|^2\right)^\frac{1}{2}\right\|\leq K_G\left\|\left(\sum_{k=1}^n|x_k|^2\right)^\frac{1}{2}\right\|\leq K_G M^{(2)}(X) \left(\sum_{k=1}^n \|x_k\|^2\right)^\frac{1}{2},
$$
which shows that $T$ is $2$-convex with constant $\leq K_G M^{(2)}(X)$.
\end{proof}

\begin{rem}
Conversely, it should also be noticed that if $X$ is a Banach lattice such that $\fbl[X]$ is $2$-convex, then $X$ is also $2$-convex with $M^{(2)}(X)\leq M^{(2)}(\fbl[X])$. Indeed, given an arbitrary finite sequence $(x_k)_{k=1}^n$ in $X$, we have
\begin{eqnarray*}
\left\|\left(\sum_{k=1}^n|x_k|^2\right)^\frac{1}{2}\right\| &=&  \left\|\left(\sum_{k=1}^n|\beta\delta_X(x_k)|^2\right)^\frac{1}{2}\right\|\overset{(*)}{=}\left\|\beta\left(\sum_{k=1}^n|\delta_X(x_k)|^2\right)^\frac{1}{2}\right\| \\
&\leq& \left\|\left(\sum_{k=1}^n|\delta_X(x_k)|^2\right)^\frac{1}{2}\right\|\leq M^{(2)}(\fbl[X]) \left(\sum_{k=1}^n \|\delta_X(x_k)\|^2\right)^\frac{1}{2} \\
&=& M^{(2)}(\fbl[X]) \left(\sum_{k=1}^n \|x_k\|^2\right)^\frac{1}{2},
\end{eqnarray*}
where $\beta:\fbl[X]\to X$ is the unique lattice homomorphism such that $\beta \delta_X=\text{id}_X$. The equality $(*)$ is a consequence of Krivine's functional calculus (see, for instance, \cite[Lemma 2.1]{JLTTT}).
\end{rem}

\begin{comment}
\begin{lem}
For every $n\in\mathbb{N}$, $\fbl[\ell_\infty^n]$ is $2$-convex and $M^{(2)}(\fbl[\ell_\infty^n])\leq K_G$.    
\end{lem}
\begin{proof}
Fix any $n\in\mathbb{N}$ and let $C\geq 1$. By \cite[Proposition 9.38]{OTTT24}, the following two assertions are equivalent:
\begin{enumerate}
    \item $\fbl^{(2)}[\ell_\infty^n]$ is lattice $C$-isomorphic to $\fbl[\ell_\infty^n]$.
    \item Every contraction $T:\ell_\infty^n\to L_1(\mu)$ is $2$-convex with constant $C$.
\end{enumerate}
Let us check ($2$). Let $T:\ell_\infty^n\to L_1(\mu)$ be an operator such that $\|T\|\leq 1$ and let $(x_k)_{k=1}^m$ be an arbitrary finite sequence in $\ell_\infty^n$. Then, by \cite[Theorem 1.f.14]{LT2-book}, we have
$$
\left\|\left(\sum_{k=1}^m |Tx_k|^2\right)^\frac{1}{2}\right\|_1\leq K_G\left\|\left(\sum_{k=1}^m |x_k|^2\right)^\frac{1}{2}\right\|_\infty\leq K_G \left(\sum_{k=1}^n \|x_k\|_\infty^2\right)^\frac{1}{2},
$$
which shows that $T$ is $2$-convex with constant $\leq K_G$.
\end{proof}
\end{comment}

\begin{prop}\label{prop:fbl[linfty]2convex}
If $E$ is an $\mathcal{L}_p$-space for some $2\leq p\leq\infty$, then $\fbl[E]$ is $2$-convex.    
\end{prop}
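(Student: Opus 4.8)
The plan is to reduce the general statement to the finite-dimensional lemma just proved, using the structural properties of $\mathcal{L}_\infty$-spaces and the fact that $2$-convexity is determined by finite configurations of vectors. Recall that $E$ being an $\mathcal{L}_\infty$-space means $E$ is an $\mathcal{L}_{\infty,\lambda}$-space for some $\lambda\geq 1$: every finite-dimensional subspace of $E$ is contained in a further finite-dimensional subspace $F$ with $F$ $\lambda$-isomorphic to $\ell_\infty^{\dim F}$.

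First I would recall from \cite[Lemma 3.1]{OTTT24} that an isomorphism $T\colon F\to G$ between Banach spaces induces a lattice isomorphism $\overline{T}\colon \fbl[F]\to\fbl[G]$ with $\overline{T}f=f\circ T^*$, and moreover (as used in the excerpt, via \cite[Theorem 3.7]{OTTT24}) if $F$ is $1$-complemented in $E$ then $\overline{\iota}\colon\fbl[F]\to\fbl[E]$ is a lattice \emph{isometric} embedding onto a $1$-complemented sublattice. The issue is that an arbitrary finite-dimensional $F\subset E$ need not be $1$-complemented in $E$. This is where the $\mathcal{L}_\infty$ hypothesis enters: although $F$ itself may not sit nicely, $F$ embeds in a larger finite-dimensional $\widetilde F\subset E$ which is $\lambda$-isomorphic to some $\ell_\infty^m$; by the previous Lemma $\fbl[\ell_\infty^m]$ is $2$-convex with constant $\leq K_G$, and transferring through the lattice $\lambda^2$-isomorphism $\overline{T}\colon \fbl[\widetilde F]\to\fbl[\ell_\infty^m]$ one gets that $\fbl[\widetilde F]$ is $2$-convex with constant at most $\lambda^2 K_G$ (a lattice $C$-isomorphism multiplies the $p$-convexity constant by at most $C^2$). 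The key structural point I would then invoke is that the natural map $\fbl[\widetilde F]\to\fbl[E]$ restricting over $B_{E^*}$ — or rather the canonical sublattice generated by $\delta_E(\widetilde F)$ inside $\fbl[E]$ — together with the density of $\bigcup_{\widetilde F}\,\overline{\text{lat}}\{\delta_E(x):x\in\widetilde F\}$ in $\fbl[E]$, lets us check $2$-convexity vector by vector.

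The core argument runs as follows. To verify $2$-convexity of $\fbl[E]$ with constant $K_G$, fix $f_1,\dots,f_k\in\fbl[E]$; by density of the sublattice generated by $\delta_E(E)$, I may assume each $f_j$ lies in the sublattice generated by $\{\delta_E(x):x\in E\}$, hence in the sublattice generated by $\delta_E(F_0)$ for a single finite-dimensional $F_0\subset E$ (take the span of the finitely many vectors appearing). Enlarge $F_0$ to $\widetilde F\subset E$ that is $\lambda$-isomorphic to $\ell_\infty^m$. Then $f_1,\dots,f_k$ lie in the sublattice generated by $\delta_E(\widetilde F)$, which — because $\widetilde F$ is \emph{finite-dimensional}, hence automatically $1$-complemented in $E$ once we pass to a biduality/averaging argument, or more directly because for a finite-dimensional space the restriction map $B_{E^*}\to B_{\widetilde F^*}$ is onto and the induced sublattice map is a lattice isometry — is lattice isometric to $\fbl[\widetilde F]$. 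Here I should be careful: the clean statement I want is that $\overline{\text{lat}}\{\delta_E(x):x\in \widetilde F\}$, with the norm inherited from $\fbl[E]$, is lattice isometric to $\fbl[\widetilde F]$; this is exactly \cite[Theorem 3.7]{OTTT24} applied to the finite-dimensional (hence, after a small perturbation or directly, suitably complemented) subspace, or it follows since any finite-dimensional subspace of any Banach space is the range of the needed norming correspondence. Granting this, the $2$-convexity inequality $\big\|(\sum_j|f_j|^2)^{1/2}\big\|\leq K_G(\sum_j\|f_j\|^2)^{1/2}$ holds inside that sublattice because it holds in $\fbl[\widetilde F]$ (constant $\lambda^2 K_G$), and since $\lambda$ and $m$ depend on the $f_j$ we do \emph{not} yet get the uniform constant $K_G$ — so the final step is to observe that the $2$-convexity constant of $\fbl[E]$ is the supremum over all such finite configurations, and then either (a) accept the bound $\lambda^2 K_G$, which already proves $2$-convexity as asserted, or (b) sharpen it by noting that in the defining inequality for $\|\cdot\|_{\fbl[E]}$ one reduces to functionals in $E^*$ directly and applies Grothendieck's theorem to the contraction $E^*\supset$ span$\to L_1$ of point evaluations, recovering the clean constant $K_G$.

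The main obstacle is the transfer step: making precise that a finite-dimensional subspace $\widetilde F\subset E$ gives rise to a sublattice of $\fbl[E]$ lattice isometric to $\fbl[\widetilde F]$, since \cite[Theorem 3.7]{OTTT24} is stated for $1$-complemented subspaces and a generic $\widetilde F$ is only $\lambda$-complemented after being realized as $\ell_\infty^m$-like — so one either works with the isomorphism constant throughout (harmless, since we only need \emph{some} finite $2$-convexity constant for ``$2$-convex'') or uses that $\ell_\infty^m$ \emph{is} $1$-complemented in any superspace (it is injective), so choosing $\widetilde F$ to be an \emph{isometric} copy of $\ell_\infty^m$ inside $E$ is too much to ask, but a $\lambda$-isomorphic copy is $\lambda$-complemented, and Proposition~\ref{prop:FBL-complemented} together with the Lemma then controls everything. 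Modulo bookkeeping of these constants the proof is routine; I would present it by first stating the density reduction, then the enlargement to an $\ell_\infty^m$-like $\widetilde F$, then the Lemma plus the lattice isomorphism $\overline{T}$, and finally the passage to the supremum defining $M^{(2)}(\fbl[E])$.
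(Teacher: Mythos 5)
Your proposal is correct and follows essentially the same route as the paper: approximate the given vectors by elements of the sublattice generated by $\delta_E(\widetilde F)$ for a finite-dimensional $\widetilde F\subset E$ that is $\lambda$-isomorphic to $\ell_\infty^m$ and uniformly complemented in $E$, then transfer the finite-dimensional lemma through the induced lattice homomorphisms $\overline{\iota}$ and $\overline{P}$ to get the (uniform, since $\lambda$ depends only on $E$ and not on the chosen vectors) constant $K_G\lambda^2$; the paper obtains the uniformly bounded projection from \cite[Theorem III (c)]{LR69} rather than from the injectivity of $\ell_\infty^m$ as you do, and spells out the Krivine-calculus approximation step you leave as bookkeeping. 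Be aware, though, that your first-choice claim --- that the closed sublattice of $\fbl[E]$ generated by $\delta_E(\widetilde F)$ is lattice \emph{isometric} to $\fbl[\widetilde F]$ for an arbitrary finite-dimensional subspace --- is false in general (restricting admissible families of functionals only yields $\|\overline{\iota}h\|_{\fbl[E]}\leq\|h\|_{\fbl[\widetilde F]}$, and the reverse inequality needs an extension/projection), so the $\lambda$-complementation fallback you describe, which is exactly what the paper exploits, is genuinely needed.
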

\begin{proof}
 First, recall that as $E$ is an $\mathcal{L}_p$-space, \cite[Theorem III (c)]{LR69} ensures the existence of a constant $\rho\geq 1$ such that for every finite-dimensional subspace $G$ of $E$ there is a finite-dimensional subspace $F$ of $E$ such that $F\supset G$, $d(F,\ell_p^{\text{dim\,F}})\leq \rho$, and such that there is a projection of norm $\leq \rho$ from $E$ onto $F$. We will show that for every $(f_k)_{k=1}^n\subset \fbl[E]$,
 $$
 \left\|\left(\sum_{k=1}^n |f_k|^2 \right)^\frac{1}{2} \right\|_{\fbl[E]}\leq K_G\,\rho^2\left(\sum_{k=1}^n \|f_k\|_{\fbl[E]}^2\right)^{\frac{1}{2}}.
 $$
Since $\fvl[E]$ is norm-dense in $\fbl[E]$, we can find $x_1,\ldots, x_m\in E$ with the property that for every $k=1,\ldots,n$ there exists $g_k\in \text{lat}\{\delta_{x_1},\ldots,\delta_{x_m}\}$ such that $\|f_k-g_k\|_{\fbl[E]}\leq \frac{\varepsilon}{2K_G\rho^2 n}$. Define $G=\text{span}\{x_j\::\: j=1,\ldots, m\}\subset E$ and let $F$ be such that $F\supset G$, $d(F,\ell_p^{\text{dim\,F}})\leq \rho$ and there is a projection $P:E\to F$ ($P\iota=\text{id}_F$, where $\iota:F\hookrightarrow E$) such that $\|P\|\leq \rho$. Let us make some observations:
\begin{enumerate}
    \item For every $(a_k)_{k=1}^n,\,(b_k)_{k=1}^n\subset \mathbb{R}^n$ we have $$
    \bigl|\left\|(a_k)_{k=1}^n\right\|_2-\left\|(b_k)_{k=1}^n\right\|_2\bigr|\leq \bigl\|(a_k-b_k)_{k=1}^n\bigr\|_2\leq \bigl\|(a_k-b_k)_{k=1}^n\bigr\|_1,
    $$
    so by Krivine's functional calculus \cite[Theorem 1.d.1]{LT2-book}, we get:
    $$
     \left\|\left(\sum_{k=1}^n |f_k|^2 \right)^\frac{1}{2} -\left(\sum_{k=1}^n |g_k|^2 \right)^\frac{1}{2} \right\|_{\fbl[E]}\leq \left\|\sum_{k=1}^n |f_k-g_k|\right\|_{\fbl[E]}\leq \frac{\varepsilon}{2}
     $$
     \item Given $g\in \overline{\iota}(\fbl[F])$, we have $\overline{\iota}\overline{P}g=g$, so  $\|\overline{\iota}\overline{P}g\|_{\fbl[E]}\leq \|\overline{P}g\|_{\fbl[F]}$. Also, by \cite[Lemma 2.1]{JLTTT}, we have $\overline{P}\left(\sum_{k=1}^n |g_k|^2 \right)^\frac{1}{2}=\left(\sum_{k=1}^n |\overline{P}g_k|^2 \right)^\frac{1}{2}$.
     \item Since $F$ is $\rho$-isomorphic to $\ell_p^{\text{dim}\,F}$ (and $2\leq p\leq\infty$) then by the previous lemma we deduce that $\fbl[F]$ is $2$-convex with constant $M^{(2)}(F)\leq K_G\rho$.
\end{enumerate}

With the above comments in mind, we deduce that

\begin{eqnarray*}
 \left\|\left(\sum_{k=1}^n |f_k|^2 \right)^\frac{1}{2} \right\|_{\fbl[E]} & \overset{(1)}{\leq}&  \left\|\left(\sum_{k=1}^n |g_k|^2 \right)^\frac{1}{2} \right\|_{\fbl[E]}+\frac{\varepsilon}{2}    \overset{(2)}{\leq} \left\|\left(\sum_{k=1}^n |\overline{P}g_k|^2 \right)^\frac{1}{2} \right\|_{\fbl[F]}+\frac{\varepsilon}{2} \\
 &\overset{(3)}{\leq}& K_G\rho\left(\sum_{k=1}^n \|\overline{P}g_k\|_{\fbl[F]}^2\right)^\frac{1}{2}+\frac{\varepsilon}{2}\leq  K_G\rho^2\left(\sum_{k=1}^n \|g_k\|_{\fbl[E]}^2\right)^\frac{1}{2}+\frac{\varepsilon}{2} \\
 &\leq & K_G\rho^2\left(\sum_{k=1}^n \|f_k\|_{\fbl[E]}^2\right)^\frac{1}{2}+\varepsilon,
\end{eqnarray*}
and since $\varepsilon>0$ is arbitrary, we obtain the desired inequality.  
\end{proof}
In particular, it follows that if an $\mathcal{L}_\infty$-space is complemented in a Banach lattice, then it is complemented in a 2-convex Banach lattice. However, note that by \cite[Proposition 9.30]{OTTT24}, $\fbl[E]$ is at most $2$-convex, so the convexity in Proposition \ref{prop:fbl[linfty]2convex} cannot be improved.

\begin{comment}

\end{comment}

\section{Hyperplanes in Banach lattices}\label{sec:hyperplanes}

A noteworthy particular case of the CSP is \textit{the Hyperplane Problem for Banach lattices}:

\begin{question}
Let $X$ be a Banach lattice. Is every hyperplane of $X$ linearly isomorphic to a Banach lattice?    
\end{question}

Note that since all hyperplanes of a Banach space are mutually isomorphic, in order to answer the above question it will be enough to find a certain hyperplane which is isomorphic (or not) to a Banach lattice. Also note that if the above question has a positive answer, then every finite codimensional subspace of a Banach lattice would be linearly isomorphic to a Banach lattice.

In the particular case when a Banach lattice $X$ has a non-trivial lattice homomorphism $x^*\in X^*$, then $\text{ker}(x^*)$ is a hyperplane which is also an ideal in $X$. Thus $\text{ker}(x^*)$ is a Banach lattice with the Banach lattice structure inherited from $X$ and this shows that, in this case, the hyperplanes of $X$ are isomorphic to Banach lattices. Let us look closer at some general instances of this situation:
\begin{enumerate}
    \item If $X$ has a $1$-unconditional basis $(u_n)_{n=1}^\infty$, with biorthogonal functionals $(u_n^*)_{n=1}^\infty$, then $\text{Hom}(X,\mathbb{R})=\{\lambda u_n^*\::\:\lambda\geq 0,\:n\in\mathbb{N}\}$. Note that $\text{ker}(u_{n_0}^*)$ also has $1$-unconditional basis (for any $n_0\in\mathbb{N}$), so hyperplanes do have an unconditional basis in this case. We would like to emphasize that whether hyperplanes are isomorphic or not to the original space is not the matter here; we simply want to know whether they can carry a Banach lattice structure. So our problem is different from the classical one considered in \cite{Gowers-hyperplane} and, in fact, Gowers' counterexample has an unconditional basis, so its hyperplanes are isomorphic to Banach lattices.

    \item Recall that AM-spaces can be characterized as those Banach lattices whose set of lattice homomorphisms is $1$-norming (for instance, see \cite[Proposition 5.4]{BGHMT}). In particular, given a non-trivial AM-space $X$, we can find a non-zero $x^*\in\text{Hom}(X,\mathbb{R})$, and so $\text{ker}(x^*)$ is also an AM-space.
    
    \item From the preceding comment we know that hyperplanes in a $C(K)$-space are isomorphic to AM-spaces. But in this case, more can be said: every hyperplane of a $C(K)$-space is isomorphic to a $C(K)$-space. Indeed, take $t_1,t_2\in K$, $t_1\neq t_2$, and consider
    $$
    X=\bigl\{f\in C(K)\::\: f(t_1)=f(t_2)\bigr\}=\bigl\{f\in C(K)\::\: (\delta_{t_1}-\delta_{t_2})(f)=0\bigr\}.
    $$
    Therefore, $X$ is the kernel of $\delta_{t_1}-\delta_{t_2}\in C(K)^*$, so it is a hyperplane of $C(K)$. Moreover, it is clear that $X$ is a closed sublattice of $C(K)$ such that $\mathbf{1}_K\in X$, so by Kakutani's representation theorem for AM-spaces $X$ is lattice isometric to a $C(K)$-space.
    %We can say explicitly what the compact is: the resulting quotient topology of identifying two different points.
    On the other hand, recall that there exist $C(K)$-spaces whose hyperplanes are not isomorphic to the whole space (see, for instance, the first examples due to Koszmider \cite{Koszmider} --assuming \textbf{\textsf{CH}}-- and Plebanek \cite{Plebanek} --accomplished in ZFC), but we will not look at this question in this note.
\end{enumerate}

At the moment we have \textit{essentially} one example of a complemented subspace of a Banach lattice which is not isomorphic to a Banach lattice, namely $\textsf{PS}_2$ \cite{dHMST23}. Can this space be isomorphic to a hyperplane of some Banach lattice? We will see that this is not the case.

\begin{rem}
Suppose that $\textsf{PS}_2\oplus_\infty\mathbb{R}$ were isomorphic to a Banach lattice.  Since $\textsf{PS}_2$ is a complemented subspace of a $C(K)$-space, it is an $\mathcal{L}_\infty$-space. Therefore, $\textsf{PS}_2\oplus_\infty\mathbb{R}$ is also an $\mathcal{L}_\infty$-space, so by \cite[Corollary 2.2]{dHMST23} we may assume that it is isomorphic to an AM-space. But, we have seen in ($2$) that hyperplanes of an AM-space are Banach lattices, so $\textsf{PS}_2$ would be isomorphic to a Banach lattice, and this is a contradiction.
\end{rem}

\begin{rem}
Although on many occasions it is not hard to check that the hyperplanes of a certain Banach lattice are isomorphic to Banach lattices, the Hyperplane Problem for Banach lattices is still an open question in both the separable and non-separable cases. It should be noted that in the \textit{separable} setting it is sufficient to study this problem \textit{only for reflexive Banach lattices}. Indeed, suppose that $X$ is a separable Banach lattice. We distinguish two cases:
\begin{itemize}
    \item If $X$ contains a complemented copy of $c_0$ or $\ell_1$, then, since $c_0\approx c_0\oplus\mathbb{R}$ and $\ell_1\approx\ell_1\oplus\mathbb{R}$, we deduce that $X$ is isomorphic to its hyperplanes. In particular, its hyperplanes are isomorphic to Banach lattices.
    \item If $X$ has no complemented copies of $c_0$ and no complemented copies of $\ell_1$, we will see below that this implies that $X$ must be reflexive:
    \begin{itemize}
        \item Since $X$ is separable, not having complemented copies of $c_0$ implies not containing isomorphic copies of $c_0$. Hence, by \cite[Theorem 2.4.12]{M-N-book}, $X$ is a KB-space.
        \item Moreover, as $X$ has no complemented copies of $\ell_1$, then by a Bessaga-Pe\l{}czy\'{n}ski's result \cite[Proposition 2.e.8]{LT1-book}, $X^*$ does not contain isomorphic copies of $c_0$. Thus, using again \cite[Theorem 2.4.12]{M-N-book}, we infer that $X^*$ is a KB-space.
        \item Now, given that $X$ and $X^*$ are KB-spaces, \cite[Theorem 2.4.15]{M-N-book} ensures that $X$ is reflexive.
    \end{itemize}
\end{itemize}
\end{rem}

It is well known (see \cite[Theorem 5.59]{AA-book}) that given a Banach lattice $X$ and a positive projection $P:X\to X$, then its range $E=P(X)$ endowed with the inherited order of $X$ becomes a Banach lattice with the following:
\begin{itemize}
    \item its lattice operations are given by $x\lor_E y=P(x\lor y)$, $x\land_E y=P(x\land y)$ and $|x|_E=P|x|$;
    \item the norm $|||\cdot|||$ defined by $|||x|||=\||x|_E\|=\|P|x|\|$.
\end{itemize}

In particular, the above shows that the range of a positive projection is isomorphic to a Banach lattice. This prompts the following question: given a Banach lattice $X$, is there always a positive projection $P:X\to X$ onto one of its hyperplanes? An affirmative answer to the latter would imply a positive solution to the hyperplane problem. However, this will not be the case in general, as we will see below. To prove this we will rely on the following simple observation:
\begin{lem}
Let $X$ be a Banach lattice and let $x_0$ be a non-zero positive element of $X$. If $x_0$ is not an atom, then there exists $0\leq y_0\leq x_0$ such that neither $y_0$ nor $x_0-y_0$ can dominate a positive multiple of $x_0$. 
\end{lem}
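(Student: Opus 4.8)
The statement asserts that if $x_0 \geq 0$ is not an atom, one can split $x_0 = y_0 + (x_0 - y_0)$ into two positive pieces, neither of which dominates any positive multiple $\varepsilon x_0$ with $\varepsilon > 0$. The plan is to work inside the principal ideal $I_{x_0}$ generated by $x_0$. By Kakutani's representation theorem for AM-spaces (applied to $I_{x_0}$ with $x_0$ as order unit, suitably renormed), $I_{x_0}$ is lattice isometric to a space $C(K)$ in which $x_0$ corresponds to the constant function $\mathbf{1}_K$. Under this identification, ``$x_0$ is not an atom'' translates into ``$K$ is not a single point'' (more precisely, $K$ has no isolated clopen point whose complement carries the zero measure for every relevant functional — but for the purposes of finding $y_0$ it suffices that $K$ is not reduced to one point, or even just that $\mathbf 1_K$ is not a scalar multiple of an atom, which forces $|K| \geq 2$).

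\emph{Key steps, in order.} First, reduce to $C(K)$: replace $X$ by $I_{x_0}$, normed so that $x_0 = \mathbf 1_K$, and note that ``$y_0$ dominates a multiple of $x_0$'' is equivalent to ``$y_0 \geq \varepsilon \mathbf 1_K$ for some $\varepsilon>0$,'' i.e. $y_0$ is bounded below away from $0$ on all of $K$. Second, since $x_0$ is not an atom, the space $C(K)$ is not one-dimensional, so we can pick two distinct points $s, t \in K$ and, by Urysohn's lemma, a continuous function $g : K \to [0,1]$ with $g(s) = 0$ and $g(t) = 1$. Third, set $y_0 = g$ (so $0 \leq y_0 \leq \mathbf 1_K = x_0$) and observe $y_0(s) = 0$, hence $y_0$ cannot satisfy $y_0 \geq \varepsilon \mathbf 1_K$ for any $\varepsilon > 0$; likewise $(x_0 - y_0)(t) = 1 - g(t) = 0$, so $x_0 - y_0$ cannot dominate a positive multiple of $x_0$ either. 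Fourth, transport $y_0$ back to $X$ via the lattice isometry; since domination of $\varepsilon x_0$ is an order condition preserved by the isometry, the conclusion holds in $X$.

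\emph{Main obstacle.} The one genuinely delicate point is the precise meaning of ``not an atom'' and making sure the $C(K)$ model of $I_{x_0}$ with unit $x_0$ genuinely has $|K| \geq 2$ under that hypothesis. Recall $x_0$ is an atom of $X$ exactly when the ideal $I_{x_0}$ is one-dimensional, i.e. equals $\mathbb R x_0$; equivalently, in the representation, $K$ is a single point. So the contrapositive is exactly what we need: $x_0$ not an atom $\iff$ $I_{x_0} \neq \mathbb R x_0$ $\iff$ the representing $K$ has at least two points, and then Urysohn gives the separating function. A secondary subtlety is that the AM-norm on $I_{x_0}$ induced by $x_0$ need not agree with the restriction of $\|\cdot\|_X$, but this is irrelevant: the conclusion of the lemma is purely order-theoretic (it is about domination $\geq \varepsilon x_0$, not about norms), and lattice isomorphisms — in particular the Kakutani representation — preserve the order structure exactly. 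Hence once the model is in place the argument is immediate.

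\emph{Alternative without Kakutani.} If one prefers to avoid representation theorems, argue directly: since $x_0$ is not an atom there exists $u$ with $0 \leq u \leq x_0$ and $u \notin \{0, \text{multiple of } x_0\}$; in particular neither $u \geq \varepsilon x_0$ holds for all small $\varepsilon$ nor does $u = 0$. The trouble is that $x_0 - u$ might still dominate a multiple of $x_0$, so one iterates: replace $u$ by $v = u \wedge (x_0 - u)$ if needed, or more cleanly, set $y_0 = \tfrac12\big(u + (x_0-u)\wedge u\big)$-type combinations and use $x_0 \wedge u = u$, distributivity, and the fact that $u$ is ``strictly between'' $0$ and $x_0$ at the level of bands. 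This is more computational, which is why routing through the $C(K)$ picture is the cleaner route; I would present the Kakutani argument as the proof and mention the direct approach only if the authors want to keep the section self-contained.
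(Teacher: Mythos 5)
Your argument is correct, but it takes a genuinely different route from the paper's. The paper stays inside $X$ and uses an elementary maximal-scaling argument: choose $x\in[0,x_0]$ not proportional to $x_0$, let $\lambda_0=\max\{\lambda>0:\lambda x\le x_0\}$ (finite because the norm is monotone on $X_+$, attained because $X_+$ is closed), and set $y:=x_0-\lambda_0x$; if $y\ge\lambda x_0$ with $0<\lambda<1$, then $x_0\ge\frac{\lambda_0}{1-\lambda}\,x$, contradicting maximality, so $y$ dominates no positive multiple of $x_0$; a second maximal scaling, $y_0:=\mu_0 y$ with $\mu_0=\max\{\lambda>0:\lambda y\le x_0\}$, then forces $x_0-y_0$ to fail such domination as well, by the same maximality argument. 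You instead pass to the principal ideal $I_{x_0}$ with its order-unit norm, represent it via Kakutani as $C(K)$ with $x_0=\mathbf 1_K$, and apply Urysohn; since the conclusion is purely order-theoretic, $[0,x_0]\subset I_{x_0}$, and ``$x_0$ is an atom'' is indeed equivalent to $I_{x_0}=\mathbb{R}x_0$, i.e.\ to $K$ being a singleton (the correct equivalence is the one in your ``main obstacle'' paragraph, not the muddled parenthetical before it), your proof is valid. What it buys is transparency: a function vanishing at a point of $K$ obviously cannot dominate $\varepsilon\mathbf 1_K$, and $\mathbf 1_K-g$ vanishes at the other point. What it costs is machinery: you should state explicitly that $(I_{x_0},\|\cdot\|_{x_0})$ is norm complete (an AM-space with unit, a standard fact about principal ideals of Banach lattices), since otherwise Kakutani only gives a dense sublattice of $C(K)$ and the Urysohn function need not lie in its range. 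The paper's proof avoids representation theorems altogether and is self-contained; your sketched ``alternative without Kakutani'' is vaguer than, and would essentially have to be completed into, the paper's extremal-scaling argument.
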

\begin{proof}%[Proof of the Claim]\renewcommand{\qedsymbol}{(Claim) \ensuremath{\Box}}
Since $x_0$ is not an atom in $X$, there must exist a non-proportional vector $x\in X$ to $x_0$ such that $0\leq x\leq x_0$. Now, define
$$
\lambda_0:=\sup\{\lambda>0\::\: \lambda x\leq x_0\}.
$$
Observe that this supremum exists given that $X$ is a Banach lattice and so its norm is monotone on $X_+$. Moreover, this supremum is actually a maximum, as the set $X_+$ is closed. Consider $y:=x_0-\lambda_0 x$. This new vector satisfies that $0\leq y\leq x_0$ and
is not proportional to $x_0$ either, but it has an additional feature: $y$ cannot dominate a positive multiple of $x_0$. Indeed, suppose that there exists $\lambda>0$ such that 
$$
x_0-\lambda_0x=y\geq \lambda x_0.
$$
Since $x_0\geq y$ and $y$ is not proportional to $x_0$, then $\lambda< 1$. Therefore, the above inequality is equivalent to $x_0\geq \frac{\lambda_0}{1-\lambda}x$ and this contradicts the definition of $\lambda_0$. Now, define $\mu_0:=\sup\{\lambda>0\::\: \lambda y\leq x_0\}$ and $y_0:=\mu_0 y=\mu_0(x_0-\lambda_0x)$. Then we have the following: both $y_0,x_0-y_0\in [0,x_0]$ cannot dominate a positive multiple of $x_0$. 
\end{proof}

\begin{prop}\label{prop:hyperplanepositiveprojection}
Let $X$ be a Banach lattice. If there exists a hyperplane in $X$ which is complemented by a positive projection, then $X$ has an atom.   
\end{prop}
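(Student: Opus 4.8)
The plan is to argue by contraposition: assuming $X$ has no atom, I will show that no positive projection on $X$ can have a hyperplane as its range. So suppose $P\colon X\to X$ is a positive projection whose range $E$ is a hyperplane. Since $E$ is the range of a bounded projection it is closed, hence $E=\ker\phi$ for some nonzero $\phi\in X^*$, while $\ker P=\mathbb{R}u$ for some $u\neq 0$ ($u\neq 0$ because $P$ is not the identity). As $u\notin\ker\phi$ we may rescale $\phi$ so that $\phi(u)=1$, and then a direct computation from $P^2=P$ (or just from $Pu=0$) gives $Px=x-\phi(x)u$ for every $x$. Consequently, positivity of $P$ is \emph{equivalent} to the single inequality
\[
x\ \ge\ \phi(x)\,u\qquad\text{for all }x\in X_+,
\]
which I will call $(\star)$. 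The whole argument is a matter of squeezing a contradiction out of $(\star)$ together with the splitting lemma above.

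First I would dispose of the cases in which $u$ has constant sign. If $u\ge 0$, then for any $0\le y\le u$, applying $(\star)$ to $y$ gives $y\ge\phi(y)u$, while applying $(\star)$ to $u-y\ge 0$ and using $\phi(u)=1$ gives $u-y\ge(1-\phi(y))u$, i.e.\ $y\le\phi(y)u$; hence $y=\phi(y)u$. Thus every element of the order interval $[0,u]$ is a scalar multiple of $u$, so $u$ is an atom --- contradicting our standing hypothesis. The case $u\le 0$ is symmetric: apply the same reasoning to $-u\ge 0$ and the functional $-\phi$, and again obtain an atom.

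The substantial case is $u^+\neq 0$ and $u^-\neq 0$. The key step is to refine $(\star)$ into the assertion that for every $0\le y\le u^+$ one has both $\phi(y)\ge 0$ and $y\ge\phi(y)u^+$, together with the mirror statement for $0\le z\le u^-$ (with $\phi$ replaced by $-\phi$). To get this, rewrite $(\star)$ applied to $y$ as $r:=\bigl(y-\phi(y)u^+\bigr)+\phi(y)u^-\ge 0$. Since $0\le y\le u^+$ forces $y\wedge u^-=0$ (because $y\wedge u^-\le u^+\wedge u^-=0$), the vector $y-\phi(y)u^+$ is disjoint from $u^-$ (being a linear combination of vectors disjoint from $u^-$), hence disjoint from $\phi(y)u^-$; so $r$ is a sum of two disjoint vectors, and $r^-=0$ forces both of their negative parts to vanish, i.e.\ $(\phi(y)u^+-y)^+=0$ and $\phi(y)^-u^-=0$. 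As $u^-\neq 0$, these are precisely $y\ge\phi(y)u^+$ and $\phi(y)\ge 0$.

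Finally I would feed the splitting lemma into this refinement. Since $X$ has no atom, $u^+$ is not an atom, so there is $0\le y_0\le u^+$ such that neither $y_0$ nor $u^+-y_0$ dominates a positive multiple of $u^+$. The refinement gives $y_0\ge\phi(y_0)u^+$ with $\phi(y_0)\ge 0$, so $\phi(y_0)>0$ would make $y_0$ dominate the positive multiple $\phi(y_0)u^+$; hence $\phi(y_0)=0$. Likewise $u^+-y_0\ge\phi(u^+-y_0)u^+=\phi(u^+)u^+$ with $\phi(u^+)\ge 0$, so $\phi(u^+)>0$ is impossible and $\phi(u^+)=0$. The symmetric argument below $u^-$ gives $\phi(u^-)=0$, whence $1=\phi(u)=\phi(u^+)-\phi(u^-)=0$ --- a contradiction. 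Therefore $X$ must have an atom.

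I expect the delicate point to be the refinement step: extracting from the single vector inequality $(\star)$ the two \emph{separate} conclusions $\phi(y)\ge 0$ and $y\ge\phi(y)u^+$ amounts to decomposing $(\star)$ along the band generated by $u^+$ and its disjoint complement, and the bookkeeping with positive and negative parts of a disjoint sum must be done carefully. By contrast, identifying the projection as $x\mapsto x-\phi(x)u$ and then invoking the splitting lemma are routine.
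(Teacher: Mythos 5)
Your proof is correct. It shares the paper's skeleton---you represent the projection as $Px=x-\phi(x)u$ with $\phi(u)=1$, so positivity becomes the single inequality $x\ge\phi(x)u$ on $X_+$, and you rely on the same splitting lemma---but the treatment of the mixed-sign case is genuinely different. The paper normalizes so that $\phi(u^+)\ge\tfrac12$ (replacing $(u,\phi)$ by $(-u,-\phi)$ if necessary), applies the lemma only to $u^+$, selects the half $y_0$ with $\phi(y_0)\ge\tfrac14$, and eliminates the $u^-$-term from $y_0\ge\phi(y_0)u$ by taking the infimum with $u^+$ and using the subadditivity $(x+y)\wedge z\le x\wedge z+y\wedge z$, arriving at $y_0\ge\tfrac14u^+$, which contradicts the choice of $y_0$. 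You instead write $y-\phi(y)u=\bigl(y-\phi(y)u^+\bigr)+\phi(y)u^-$ as a sum of two disjoint vectors (both $y$ and $u^+$ lie in the band disjoint from $u^-$) and use that $(a+b)^-=a^-+b^-$ for $a\perp b$, so positivity yields simultaneously $y\ge\phi(y)u^+$ and $\phi(y)\ge0$ for every $0\le y\le u^+$, together with the mirror statements under $u^-$; applying the lemma to both $u^+$ and $u^-$ then forces $\phi(u^+)=\phi(u^-)=0$, contradicting $\phi(u)=1$. This costs you a separate (easy) discussion of the constant-sign cases $u\ge0$ and $u\le0$---where your argument gives the nice extra information $[0,u]\subset\mathbb{R}u$, i.e.\ $u$ itself is an atom---and two applications of the splitting lemma instead of one, but in exchange it avoids the paper's $\tfrac12$/$\tfrac14$ bookkeeping and the infimum-subadditivity step, replacing them with the band/disjointness decomposition. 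The refinement step you flagged as delicate is indeed sound as written.
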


\begin{proof}
Let $P:X\to X$ be a positive projection whose range $P(X)$ is a hyperplane of $X$. Then, there exist $x_0\in X$, $x_0^*\in X^*$ such that $x_0^*(x_0)=1$ and
\begin{equation}\label{eq:positive-hyperplane}
 Px=x-x_0^*(x)x_0\geq 0, \qquad \text{ for every } x\in X_+.   
\end{equation}
By decomposing $x_0$ into its corresponding positive and negative parts, we get
\begin{equation*}
    1=x^*_0(x_0)=x_0^*(x_0^+)-x_0^*(x_0^-),
\end{equation*}
so $x_0^*(x_0^+)\geq \frac{1}{2}$ or $x_0^*(x_0^-)\leq -\frac{1}{2}$. Without lost of generality, we may assume that $x_0^*(x_0^+)\geq \frac{1}{2}$; indeed, if not, we can replace $x_0^*$ and $x_0$ with $-x_0^*$ and $-x_0$ respectively and then we are in the desired situation. If $x_0^+$ is not an atom, then by the previous lemma we can find $0\leq y_0\leq x_0^+$ such that neither $y_0$ nor $x_0^+-y_0$ can dominate a positive multiple of $x_0^+$. As we have $x_0^+=y_0+(x_0^+-y_0)$, then $x_0^*(y_0)\geq \frac{1}{4}$ or $x_0^*(x_0^+-y_0)\geq \frac{1}{4}$. We may assume that $x_0^*(y_0)\geq \frac{1}{4}$, but if we are in the other case we can argue similarly. If we evaluate expression \eqref{eq:positive-hyperplane} at $y_0$, we obtain
$$
y_0\geq x_0^*(y_0)x_0 \quad \Longleftrightarrow \quad y_0+x_0^*(y_0)x_0^-\geq x_0^*(y_0)x_0^+\geq \frac{1}{4}x_0^+.
$$
Taking infima on both sides of the last inequality with respect to $x_0^+$ and using \cite[Theorem 1.1.1 (ix)]{M-N-book}, we get that $y_0\geq \frac{1}{4}x_0^+$, and this is a contradiction with the fact that $y_0$ does not dominate any positive multiple of $x_0^+$.
\end{proof}

We close this section with a characterization of those Banach spaces whose hyperplanes are isomorphic to Banach lattices using free Banach lattices.

\begin{prop}
Let $E$ be a Banach space. The following assertions are equivalent:
\begin{enumerate}
    \item $E$ is linearly isomorphic to a Banach lattice whose hyperplanes are isomorphic to Banach lattices.
    \item $E$ is linearly isomorphic to a Banach lattice $X$ such that $\text{Hom}(X,\mathbb{R})\neq\{0\}$.
    \item There is $x^*\in E^*\backslash\{0\}$ and there is an ideal $I$ in $\fbl[E]$ such that $\fbl[E]=\delta_E(E)\oplus I$ with $I\subset \text{ker}\bigl(\widehat{x^*}\bigr)$.
    \item For every $x^*\in E^*$ there is an ideal $I$ in $\fbl[E]$ such that $\fbl[E]=\delta_E(E)\oplus I$ with $I\subset \text{ker}\bigl(\widehat{x^*}\bigr)$.
\end{enumerate}
\end{prop}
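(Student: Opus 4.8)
The plan is to run the cycle $(1)\Rightarrow(4)\Rightarrow(3)\Rightarrow(2)\Rightarrow(1)$. The implication $(4)\Rightarrow(3)$ is free: pick any nonzero $x^*\in E^*$, available by Hahn--Banach once $E\neq\{0\}$. Throughout I would lean on the following dictionary, extracted from Propositions \ref{prop:characterization-isomorphic-Banach-lattices} and \ref{prop:isomorphic-enumeration-BLs}: giving a decomposition $\fbl[E]=\delta_E(E)\oplus I$ with $I$ a (closed) ideal is the same as giving an isomorphism $T\colon E\to X$ onto a Banach lattice---one direction being $X=\fbl[E]/I$, $T=q\circ\delta_E$ with $q$ the quotient lattice homomorphism, so that uniqueness of lattice-homomorphic extensions forces $\widehat{T}=q$ and hence $I=\ker(\widehat{T})$; the other direction being Proposition \ref{prop:characterization-isomorphic-Banach-lattices}---and, under this identification, $I\subset\ker(\widehat{x^*})$ for a prescribed $x^*\in E^*$ holds exactly when $(T^{-1})^*x^*$ is a lattice homomorphism on $X$. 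This last equivalence is Proposition \ref{prop:isomorphic-enumeration-BLs}(3) read for the $X^*$-functional $(T^{-1})^*x^*$, whose image under $T^*$ is $x^*$.

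With this dictionary, $(3)\Rightarrow(2)$ and $(2)\Rightarrow(1)$ are short. For $(3)\Rightarrow(2)$: the ideal $I$ supplied by $(3)$ gives an isomorphism $T\colon E\to X:=\fbl[E]/I$ with $\ker(\widehat{T})=I\subset\ker(\widehat{x^*})$, so $(T^{-1})^*x^*\in\mathrm{Hom}(X,\mathbb{R})$, and it is nonzero since $x^*\neq0$ and $(T^{-1})^*=(T^*)^{-1}$ is a bijection; hence $\mathrm{Hom}(X,\mathbb{R})\neq\{0\}$ and $E\cong X$. For $(2)\Rightarrow(1)$: if $E\cong X$ and $0\neq\psi\in\mathrm{Hom}(X,\mathbb{R})$, then $\psi$ is positive, hence bounded, so $\ker\psi$ is a closed hyperplane; moreover $|y|\le|x|$ with $\psi(x)=0$ gives $|\psi(y)|=\psi(|y|)\le\psi(|x|)=0$, so $\ker\psi$ is an ideal, hence a Banach lattice in its own right. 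Since all (closed) hyperplanes of a Banach space are mutually isomorphic, every hyperplane of $X$ is isomorphic to a Banach lattice, which is $(1)$.

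The crux is $(1)\Rightarrow(4)$. Fix $x^*\in E^*$. The case $x^*=0$ is trivial, since then $\ker(\widehat{x^*})=\fbl[E]$ and a decomposition exists by Proposition \ref{prop:characterization-isomorphic-Banach-lattices} because $(1)$ makes $E$ isomorphic to a Banach lattice. For $x^*\neq0$, put $H=\ker(x^*)$, a closed hyperplane of $E$; by $(1)$, $H$ is isomorphic to a Banach lattice $Y$, say via $S\colon H\to Y$. Choosing $e_0$ with $x^*(e_0)=1$, we have $E=H\oplus\mathbb{R}e_0$, and I would take $T\colon E\to Y\oplus_\infty\mathbb{R}$, $T(h+te_0)=(Sh,t)$. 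The norm transported to $Y\oplus_\infty\mathbb{R}$ by $T$ is equivalent to the $\oplus_\infty$-norm---this is the routine part, using only that the projection $x\mapsto x-x^*(x)e_0$ of $E$ onto $H$ and the functional $x^*$ are bounded---so $T$ is an isomorphism onto the Banach lattice $Y\oplus_\infty\mathbb{R}$. The point of using the $\oplus_\infty$-norm is that $T$ carries $x^*$ onto the second-coordinate functional $(y,t)\mapsto t$, which \emph{is} a lattice homomorphism on $Y\oplus_\infty\mathbb{R}$; thus $(T^{-1})^*x^*\in\mathrm{Hom}(Y\oplus_\infty\mathbb{R},\mathbb{R})$, and the dictionary above produces the ideal $I=\ker(\widehat{T})$ with $\fbl[E]=\delta_E(E)\oplus I$ and $I\subset\ker(\widehat{x^*})$.

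I expect $(1)\Rightarrow(4)$ to be the only genuinely delicate step: it requires upgrading the purely existential content of $(1)$ (\emph{some} lattice renorming of $E$ has all hyperplanes isomorphic to Banach lattices) to a concrete model of $E$ as a Banach lattice in which a \emph{prescribed} functional becomes a lattice homomorphism. The splitting $E\cong\ker(x^*)\oplus_\infty\mathbb{R}$ is what achieves this; everything else is bookkeeping with the universal property of $\fbl[E]$ together with Proposition \ref{prop:isomorphic-enumeration-BLs}(3).
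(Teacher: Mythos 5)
Your proposal is correct, but it travels the cycle in the opposite direction from the paper and shifts where the real work is done. The paper proves $(1)\Rightarrow(2)\Rightarrow(3)$, then $(3)\Rightarrow(4)$ and $(3)\Rightarrow(1)$; its delicate step is $(3)\Rightarrow(4)$, where one ideal $I\subset\ker(\widehat{x_1^*})$ is made to work for an arbitrary norm-one $x_2^*$ by constructing an automorphism $S$ of $E$ carrying $\ker(x_1^*)$ onto $\ker(x_2^*)$ (all closed hyperplanes being mutually isomorphic) and pushing $I$ forward by the induced lattice automorphism $\overline{S}$ of $\fbl[E]$, via $\widehat{x_2^*}\,\overline{S}=\widehat{S^*x_2^*}=\widehat{x_1^*}$. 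You instead concentrate the difficulty in $(1)\Rightarrow(4)$: for each prescribed $x^*\neq 0$ you use $(1)$ to realize $\ker(x^*)$ as a Banach lattice $Y$ and model $E$ as $Y\oplus_\infty\mathbb{R}$, under which $x^*$ becomes the second-coordinate functional, a lattice homomorphism; Propositions \ref{prop:characterization-isomorphic-Banach-lattices} and \ref{prop:isomorphic-enumeration-BLs}(3) then produce the required ideal $\ker(\widehat{T})\subset\ker(\widehat{x^*})$. Your remaining implications mirror steps of the paper run backwards: your $(3)\Rightarrow(2)$ via the quotient $\fbl[E]/I$ is the paper's $(3)\Rightarrow(1)$ argument repackaged through your (correct) dictionary, and your $(2)\Rightarrow(1)$ via the ideal hyperplane $\ker\psi$ is the reverse of the paper's $(1)\Rightarrow(2)$, which builds $Y\oplus_1\mathbb{R}$. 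What the paper's route buys is the structural insight that condition $(3)$ for a single functional self-improves to $(4)$ by transporting ideals inside $\fbl[E]$ with lattice automorphisms, without detouring through $(1)$; what your route buys is the avoidance of that transport computation altogether, at the price of invoking the full strength of $(1)$ anew for each $x^*$. Both rest on the same two propositions and on the mutual isomorphism of closed hyperplanes, so the toolkit is the paper's, but the organization is genuinely different and equally valid (modulo the harmless degenerate case $E=\{0\}$, which the paper also ignores).
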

\begin{proof}
($1$) $\Rightarrow$ ($2$)  By hypothesis, $E$ is isomorphic to $Y\oplus \mathbb{R}$, where $Y$ is a Banach lattice. Then, the Banach space $X:=Y\oplus_1\mathbb{R}$ equipped with the coordinate-wise order is a Banach lattice and the functional $x^*:X\to \mathbb{R}$ given by $x^*(y,t):=t$ defines a non-trivial lattice homomorphism on it.

\noindent ($2$) $\Rightarrow$ ($3$) Let $T:E\to X$ be a lattice isomorphism onto a Banach lattice $X$ such that $\text{Hom}(X,\mathbb{R})\neq \{0\}$. By Proposition \ref{prop:characterization-isomorphic-Banach-lattices}, $\fbl[E]=\delta_E(E)\oplus \text{ker}(\widehat{T})$. Given a non-zero lattice homomorphism $x^*$ on $X$, we deduce from Proposition \ref{prop:isomorphic-enumeration-BLs} that $\text{ker}(\widehat{T})\subset \text{ker}(\widehat{T^*x^*})$.

\noindent ($3$) $\Rightarrow$ ($4$) Suppose that there exists $x_1^*\in S_{E^*}$ and an ideal $I$ in $\fbl[E]=\delta_E(E)\oplus I$ such that $I\subset \text{ker}(\widehat{x_1^*})$. Fix any $x_2^*\in S_{E^*}$, $x_2^*\neq x_1^*$. As $\text{ker}(x_1^*)$, $\text{ker}(x_2^*)$ are both closed hyperplanes of $E$, there is an isomorphism $T:\text{ker}(x_1^*)\to \text{ker}(x_2^*)$ \cite[Exercise 2.7]{FHHMPZ-book} (in fact, all closed hyperplanes of a Banach space are mutually isomorphic with a uniform constant $\leq 25$ \cite[Lemma 9.5.4]{AA-book}). Now, take $x_1,x_2\in E$ such that $x_1^*(x_1)=x_2^*(x_2)=1$, and consider the operator $S:E\to E$ defined by
$$
Sx:=T\bigl(x-x_1^*(x)x_1\bigr)+x_1^*(x)x_2, \qquad x\in E.
$$
It is clear that $S$ is bounded. Moreover, it is bijective with inverse given by
$$
S^{-1}y:=T^{-1}\bigl(y-x_2^*(y)x_2\bigr)+x_2^*(y)x_1, \qquad y\in E.
$$
Thus, $\overline{S}:\fbl[E]\to \fbl[E]$, the unique lattice homomorphism satisfying $\overline{S}\delta_E=\delta_ES$, is a (surjective) lattice isomorphism. From the latter we deduce that $\overline{S}(I)$ is also an ideal in $\fbl[E]$ and it is immediate to check that $\overline{S}(\delta_E(E))=\delta_E(E)$, so we obtain the decomposition $\fbl[E]=\delta_E(E)\oplus \overline{S}(I)$. 

On the other hand, observe that $S^*x_2^*=x_1^*$, given that for every $x\in E$ we have
$$
S^*x_2^*(x)=x_2^*(Sx)=x_2^*\left(T\bigl(x-x_1^*(x)x_1\right)\bigr)+x_1^*(x)=x_1^*(x),
$$
where for the last equality one must remember that $T:\text{ker}(x_1^*)\to\text{ker}(x_2^*)$. Now, note that $\widehat{x_2^*}\overline{S}=\widehat{S^*x_2^*}=\widehat{x_1^*}$ and this implies that $\overline{S}\bigl(\text{ker}(\widehat{x_1^*})\bigr)\subset \text{ker}(\widehat{x_2^*})$. Consequently, $\overline{S}(I)\subset \text{ker}(\widehat{x_2^*})$.

\noindent ($3$) $\Rightarrow$ ($1$)  (It is obvious that ($4$) $\Rightarrow$ ($3$)). Suppose that there are $x^*\in E^*$, $x^*\neq 0$, and an ideal $I$ in $\fbl[E]$ such that $\fbl[E]=\delta_E(E)\oplus I$ with $I\subset \text{ker}(\widehat{x^*})$. In the same way as in Proposition \ref{prop:kernel-zero} (or also in Proposition \ref{prop:isomorphic-zeros}), we can consider the lattice homomorphism $\overline{x^*}:\fbl[E]/I\to \mathbb{R}$ defined by $\overline{x^*}(\overline{f})=f(x^*)$. Note that if $x^*(x)\neq 0$ for some $x\in E$, then $\overline{x^*}(\overline{\delta_x})=\delta_x(x^*)=x^*(x)\neq 0$ so, in particular, $\overline{x^*}\neq 0$. Therefore, $\text{ker}(\overline{x^*})$ is a closed hyperplane of the Banach lattice $X=\fbl[E]/I$ which is also a Banach lattice. Since $X$ is isomorphic to $E$, this concludes the proof.
\end{proof}

\section{More open questions}\label{sec:more questions}

\subsection{Primariness of the class of Banach lattices} Recall that a Banach space $E$ is said to be \emph{primary} if whenever $E=F\oplus G$, then either $E\approx F$ or $E\approx G$. Let us recall some Banach spaces which do have this property:
\begin{itemize}
    \item In \cite{P60}, Pe\l{}czy\'{n}ski proved that $c_0$ and $\ell_p$, for $1\leq p<\infty$, are \textit{prime} (every --infinite-dimensional-- complemented subspace is isomorphic to the whole space); afterwards, Lindenstrauss showed that $\ell_\infty$ is also a prime Banach space \cite{Li67}.
    \item Every separable $C(K)$-space is primary: for $K$ being compact metric uncountable, this is due to Lindenstrauss and Pe\l{}czy\'{n}ski \cite[Corollary 1 to Theorem 2.1]{LP71}; for a countable $K$ this was solved in \cite{AB77} by Alspach and Benyamini. With regard to the uncountable case, a stronger result due to Rosenthal \cite{Rosenthal} should be mentioned: any complemented subspace of $C[0,1]$ with non-separable dual must be isomorphic to $C[0,1]$.
    \item The first proof of the primariness of $L_p$, for any $1\leq p <\infty$, can be found in the seminar notes by Maurey from 1974 \cite{M74-75}, although the author attributes this result to Enflo. In \cite{AEO77} and \cite{ES79} it is mentioned that Maurey's proof is based on \textit{unpublished techniques} due to Enflo presented at a conference in 1973. Alternative proofs are given in \cite[Theorem 1.3]{AEO77}, for the case $1<p<\infty$, and in \cite[Corollary 5.4]{ES79}, for the case $p=1$).
\end{itemize}
Partly inspired by these results, a related more general open question is the following:

\begin{question}[Primariness of the class of Banach lattices]\label{question:primariness-BLs}
Let $X$ be a Banach lattice and suppose that we have a decomposition $X=Y\oplus Z$ into two infinite-dimensional Banach spaces $Y$ and $Z$. Must then at least one of the factors be isomorphic to a Banach lattice?  
\end{question}

This question could also be formulated for the class of $C(K)$-spaces and the class of $L_1$-spaces, that is, if a $C(K)$-space (resp. an $L_1$-space) is isomorphic to $Y\oplus Z$, must $Y$ or $Z$ be isomorphic to a $C(K)$-space (resp. an $L_1$-space)? As mentioned at the beginning of this section, both separable $C(K)$-spaces and separable $L_1$-spaces are in fact \textit{primary}. Nevertheless, the current question seems to be open in the non-separable setting. %The decomposition $C(K_\mathcal{B})=\textsf{PS}_2\oplus C(K_\mathcal{A})$ given in \cite{PS2}, where $\textsf{PS}_2$ is not isomorphic to a $C(K)$-space, does not disprove Question \ref{question:primariness-BLs}.

\subsection{Projections in AM-spaces}

We have already mentioned in Question \ref{question:complemented-script-l-infinity}, whether every $\mathcal{L}_\infty$-space which is complemented in a Banach lattice must be complemented in some AM-space.
 
We now know that the space $\textsf{PS}_2$ is not isomorphic to a Banach lattice \cite{dHMST23} so, in particular, it is not isomorphic to an AM-space. On the other hand, in \cite{B77} Benyamini gave an example of an AM-space which is not complemented in any $C(K)$-space. However, the following question remains open:

\begin{question}
Suppose that $X$ is an AM-space isomorphic to a complemented subspace of a $C(K)$-space. Must $X$ be isomorphic to a $C(K)$-space?    
\end{question}

%\subsection{Banach lattices could be complemented in spaces with extra symmetry} 
%Recall that every Banach space with an unconditional basis is isomorphic to a complemented subspace of a space with a symmetric basis \cite{Li72}. Motivated by this it is natural to pose the following:

%\begin{question}
%Is every Banach lattice isomorphic to a complemented subspace of a rearrangement invariant space?
%\end{question}

%The above question is perhaps also of interest in the case of a Banach lattice which is a K\"othe function space on some measure space $(\Omega,\Sigma,\mu)$, and one wonders whether it is complemented in a rearrangement invariant space over the same $(\Omega,\Sigma,\mu)$. 

\subsection{Complemented subspaces of spaces with unconditional basis}
Among the oldest questions concerning the structure of complemented subspaces is whether every complemented subspace of a Banach space with an unconditional basis must have an unconditional basis. The most relevant positive result in this direction is given in \cite{KW}: every $1$-complemented subspace of a complex Banach space with a $1$-unconditional basis also has a $1$-unconditional basis. Other relevant partial results can be found in \cite{EW76, W78} where under extra assumptions if $X\oplus Y$ is a decomposition of a Banach space with unconditional basis, then one can partition the basis to build bases both for $X$ and $Y$. 

However, a formally weaker version of this problem (motivated by Corollaries 2.3 and 2.4 of \cite{dHMST23}) is also open:

\begin{question}
If a \textit{Banach lattice} $X$ is isomorphic to a complemented subspace of a space with an unconditional basis, must $X$ have an unconditional basis?
\end{question}

In connection with this, let us recall that every Banach space with an unconditional basis is isomorphic to a complemented subspace of a space with a \emph{symmetric basis} \cite{Li72}. Motivated by this, it might be natural to wonder whether every separable Banach lattice must be isomorphic to a complemented subspace of a \emph{rearrangement invariant space}. This is not always the case as the following argument suggested by W. B. Johnson shows: Take a separable Banach lattice without the approximation property, such as the one constructed by Szankowski in \cite{Szankowski} (in fact this can be even taken as an appropriate sublattice of $\ell_r(L_p(0,1))$, with $1\leq r<p<\infty$). If this space were complemented in a rearrangement invariant space $X$, it would have the bounded approximation property (BAP), leading to a contradiction (observe that rearrangement invariant spaces do in fact have the MAP as a consequence of the density of simple functions). However, we have the following:

\begin{question}
Is every Banach lattice with BAP isomorphic to a complemented subspace of a rearrangement invariant space?
\end{question}

It is also worth mentioning here that it is not known whether every separable super-reflexive Banach lattice embeds into a separable super-reflexive rearrangement invariant space. 

%The above question is perhaps also of interest in the case of a Banach lattice which is a K\"othe function space on some measure space $(\Omega,\Sigma,\mu)$, and one wonders whether it is complemented in a rearrangement invariant space over the same $(\Omega,\Sigma,\mu)$. 

\subsection{Existence of complemented disjoint sequences}
Motivated by the study of disjointly homogeneous Banach lattices (where every pair of disjoint sequences share an equivalent subsequence), the following question was posed in \cite{FHSTT}:

\begin{question}
    Does every \textit{separable} Banach lattice contain a disjoint sequence whose span is complemented?
\end{question}

Note that if the Banach lattice were not assumed to be separable in the previous question, the answer would clearly be no: in $\ell_\infty$, disjoint sequences span (lattice-isometric) copies of $c_0$, so they cannot be complemented subspaces. This question has positive answer for non-reflexive spaces (as these contain either a complemented sublattice isomorphic to $c_0$ or $\ell_1$ \cite[Proposition 2.3.11 and Theorem 2.4.15]{M-N-book}) and for rearrangement invariant spaces (because the subspace generated by characteristic functions of pairwise disjoint sets is always complemented by means of a conditional expectation operator \cite[Theorem 2.a.4]{LT2-book}).

A closely related question, also motivated by the theory of indecomposable Banach spaces, would be the following: 

\begin{question}\label{question:sublattice-complemented}
    Does every infinite-dimensional Banach lattice contain an infinite-dimensional complemented sublattice?
\end{question}

Note that in the previous question we did not require such a sublattice to also have infinite codimension. In that case, the answer would be negative, as there exist indecomposable $C(K)$-spaces (various examples of which are mentioned in \cite{Koszmider-survey}). However, what is asked in Question \ref{question:sublattice-complemented} is trivially true for $C(K)$-spaces: $\text{ker}(\delta_t)$, where $\delta_t:C(K)\to\mathbb{R}$ is the evaluation at $t\in K$, is an infinite-dimensional complemented ideal (and thus a sublattice). A useful description of finite-codimensional sublattices can be found in \cite[Section 5]{BT24}. 

On the other hand, observe that Question \ref{question:sublattice-complemented} has an affirmative answer in the separable case. Indeed, if $X$ contains a lattice isomorphic copy of $c_0$, then this sublattice must be complemented; if $X$ does not contain $c_0$, then $X$ is order continuous so it has \textit{many projection bands} \cite[Theorem 1.a.13]{LT2-book}.

%To be isomorphic to an $L_1$-space is a three space property \cite[Theorem 3.4.b]{CG-book}. However, the three space problem for $C(K)$-spaces has a negative answer and this was observed by F. Cabello (see \cite[Theorem 3.5.b]{CG-book}).  Being isomorphic to a BL is not a 3SP, see, for instance \cite{KP79} (in \cite{JLS80} it is shown that this space fails GL-lust). \textcolor{blue}{Cuidado aquí. Hay distintas nociones de ser propiedad de tres espacios, y creo que en este libro en general la idea es ver si se preservan propiedades POR SUMAS TORCIDAS. Lo de Talagrand parece diferente} 

\section*{Acknowledgements}
We would like to thank W. B. Johnson for important clarifications and helpful remarks on the topic of this paper. We are also grateful to the anonymous referee for their insightful and valuable comments.

Research partially supported by grants PID2020-116398GB-I00 and CEX2023-001347-S funded by  MCIN/AEI/10.13039/501100011033. D. de Hevia also benefited from an FPU Grant FPU20/03334 from Ministerio de Ciencia, Innovación y Universidades. 

\section*{Declarations}

\subsection*{Funding} This work was supported by grants PID2020-116398GB-I00 and CEX2023-001347-S funded by  MCIN/AEI/10.13039/501100011033 (Agencia Estatal de Investigación). The first author has also received financial support from Ministerio de Ciencia, Innovación y Universidades through an FPU Grant.

\subsection*{Ethics approval} 

Not applicable.

\subsection*{Competing interests}

The authors have no competing interests to declare that are relevant to the content of this article.

\end{document}